%
%
%
%
%

%
\RequirePackage{fix-cm}
\documentclass[smallextended]{svjour3}       
\smartqed  
\usepackage{graphicx}
%
%
%
%
%


\usepackage{amsmath,amssymb,amsfonts,latexsym}
\usepackage{color}
\usepackage{eucal}
\usepackage{xspace}
\usepackage{hyperref}

\usepackage{bm}
\usepackage[toc,page]{appendix}









\usepackage[round]{natbib}

\begin{document}
	
	\title{Statistics of topological RNA  structures 
	}
	
	
	\author{Thomas J. X. Li         \and
		Christian M. Reidys 
	}
	

	\institute{Thomas J. X. Li \at
		Biocomplexity Institute of Virginia Tech\\
		Blacksburg, VA 24061, USA\\
		\email{thomasli@vbi.vt.edu}           
		\and
		Christian M. Reidys \at
		Biocomplexity Institute of Virginia Tech\\
		Blacksburg, VA 24061, USA\\
		\email{duckcr@vbi.vt.edu}
	}
	
	\date{Received: date / Accepted: date}

	\maketitle
	
	\begin{abstract}
		 In this paper we study properties of topological RNA structures, i.e.~RNA contact structures
		 with cross-serial interactions that are filtered by their topological genus. RNA secondary
		 structures within this framework are topological structures having genus zero.
		 We derive a new bivariate generating function whose singular expansion allows us to analyze
		 the distributions of arcs, stacks, hairpin- , interior- and multi-loops.
                 We then extend this analysis to H-type pseudoknots, kissing hairpins as well as $3$-knots
                 and compute their respective expectation values.
		 Finally we discuss our results and put them into context with data obtained by uniform sampling
		 structures of fixed genus.

		\keywords{RNA structure \and Pseudoknot \and  Fatgraph \and Loop \and Genus \and Generating function \and Singularity analysis }
		 \subclass{05A16 \and 92E10 \and 92B05}
	\end{abstract}


\section{Introduction}

An RNA sequence is described by its primary structure, a linear oriented sequence of the nucleotides and can be viewed
as a string over the alphabet $\{\mathbf{A},\mathbf{U},\mathbf{G},\mathbf{C}\}$. An RNA strand folds by forming hydrogen
bonds between pairs of nucleotides according to Watson-Crick \textbf{A-U}, \textbf{C-G} and wobble \textbf{U-G} base-pairing
rules. The secondary structure encodes this bonding information of the nucleotides irrespective of the actual spacial embedding.
More than three decades ago, Waterman and colleagues pioneered the combinatorics and prediction of RNA secondary
structures~\citep{Waterman:78s,Waterman:79a,Waterman:78aa,Howell:80,Waterman:94a,Waterman:93}. Represented as a \emph{diagram}
by drawing its sequence on a horizontal line and each base pair as an arc in the upper half-plane, RNA secondary
structure contains no crossing arcs (two arcs $(i_1,j_1)$ and $(i_2,j_2)$ cross if the nucleotides appear in the order
$i_1<i_2<j_1<j_2$ in the primary structure).

In fact, it is well-known that there exist cross-serial interactions, called pseudoknots in RNA~\citep{Westhof:92}, see Fig.~\ref{F:pk1}.
RNA structures with cross-serial interactions are of biological significance, occur often in practice and are found to be functionally
important in tRNAs, RNAseP~\citep{Loria:96}, telomerase RNAs~\citep{Staple:05,Chen:00}, and ribosomal RNAs~\citep{Konings:95}.
Cross-serial interactions also appear in  plant viral RNAs and
\textit{in vitro}  RNA evolution experiments have produced pseudoknotted RNA families, when binding HIV-1 reverse
transcriptase~\citep{Tuerk:92}.

However, little is known w.r.t.~their basic statistical properties. In order to be able to identify biological features this paper establishes the ``base line'' by studying random RNA pseudoknot structures.
Basic questions here are for
instance how the statistics of hairpin-, interior- and multi-loops change in the context of cross-serial bonds and to study
new features as H-loops and kissing hairpins.

The key to organize and filter structures with cross-serial interactions is to introduce topology. The idea is simple: instead
of drawing a structure in the plane (sphere) we draw it on more sophisticated orientable surfaces. The advantage of this is that
this presentation allows to eliminate any cross-serial interactions. RNA secondary structure fits seamlessly into
this framework, since these are exactly topological structures of genus zero, i.e.~structures that can be drawn on a sphere
without crossings.

The topology of RNA structures has first been studied in~\citet{Waterman:93,Penner:03} and the classification and expansion
of RNA structures including pseudoknots in terms of the topological genus of an associated fatgraph via matrix theory
in~\citet{Orland:02,Vernizzi:05,Bon:08}. The computation of the genus of a fatgraph dates back to~\citet{Euler:52} and was
applied to RNA structures by~\citet{Orland:02,Bon:08}.~\citet{reidys:2013} study topological RNA structures of higher genus
and associate them with Riemann's Moduli space in~\citet{Penner:03}. In~\citet{Huang:11}, a loop-based
folding algorithm of topological RNA structures is given.~\citet{Huang:13} present a linear time uniform sampling for these topological
structures. Recently, ~\citet{Huang:16} introduce a stochastic context-free grammar (SCFG) facilitating the efficient Boltzmann-sampling
of RNA pseudoknotted structures.

An RNA pseudoknotted structure is modeled by augmenting the notion of a graph, as an orientable fatgraph. This is obtained by replacing
vertices by discs and edges by ribbons in the diagram representation.
Gluing the sides of these ribbons, creates a closed, orientable surface of genus $g$, which is a connected sum of $g$ tori
(see Section~\ref{S:Back}).
As topological genus completely characterizes any closed, orientable surface~\citep{Massey:69},
RNA structures are filtered by just one parameter, the genus of their associated
fatgraphs and RNA secondary structures are exactly structures of genus zero. The genus of a structure is not affected by removing noncrossing
arcs or collapsing a stack into a singleton. This leads to the notion of a \emph{shape}, a diagram which contains no unpaired vertices and no arcs of length $1$, and
in which any stack has length one.
The generating function of shapes of genus $g$ is computed in~\cite{Huang:14} (see also~\citet{Li:14}) and deeply rooted in the work
of~\citet{Harer:86}. Another notion called \emph{irreducible shadow} has been studied in~\cite{Han:14,Li:13}. Intuitively, an irreducible
shadow can be viewed as the minimal building block of a shape. A similar notion is
discussed in~\citet{Orland:02,Vernizzi:05,Bon:08}. A shape can be constructed by nesting and concatenating irreducible shadows.
The generating function of irreducible shadows is computed in~\cite{Han:14}.
\begin{figure}
	\centering
	\includegraphics[width=1\textwidth]{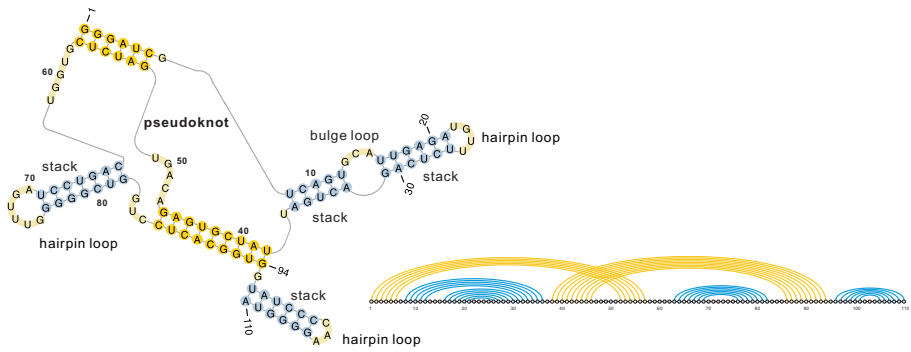}
	\caption
	    {\small The secondary structure and diagram representation of ribox02, a ribozyme that catalyzes an oxidation-reduction
              reaction~\citep{Tsukiji:03}.   Stacks and loops are indicated by different colors.
	}
	\label{F:pk1}
\end{figure}

This paper is motivated by the observation that uniformly sampled topological RNA structures exhibit sharply concentrated number of
arcs and that the distribution is unaffected by their genus.
To analyze this, we introduce a novel bivariate generating function for RNA structures of genus $g$ filtered by the number of arcs.
For $g\geq 1$, we compute these by employing the shape polynomial~\citep{Huang:14}.
We show that the singular expansion exhibits an exponential factor independent of $g$ and a subexponential factor having degree
$\frac{6g-3}{2}$, closely related to the degree of the shape polynomial. We shall prove a central limit theorem for the distribution
of the number of arcs in structures of genus $g$. We then extend this analysis to stacks, hairpin loops, bulges, interior loops and
multi-loops, generalizing the results of~\citet{reidys:2013} and results for secondary structures of~\citet{Hofacker:98}
and~\citet{Barrett:16}.

We furthermore establish the block decomposition for RNA pseudoknot structures, see Fig.~\ref{F:pk1},  generalizing the standard
decomposition of secondary structures of ~\cite{Waterman:78s}.  
Augmenting the bivariate generating polynomial of shapes by marking specific types of irreducible shadows, we show that the
expectation value of H-type, kissing hairpin, $3$-knot and $4$-knot pseudoknots, see Fig.~\ref{F:g1pk}, in uniformly generated structures
of any genus is $ O\left(n^{-1}\right)$, $ O(n^{-\frac{1}{2}})$, $ O(n^{-\frac{1}{2}})$ and $ O(1)$, respectively, see Fig.~\ref{F:p-Loop2}.

\begin{figure}
	\centering
	\includegraphics[width=\textwidth]{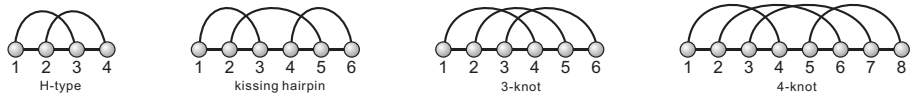}
	\caption
	{\small Four types of pseudoknots of genus $1$.
	}
	\label{F:g1pk}
\end{figure}

\begin{figure}
	\centering
	\includegraphics[width=.7\textwidth]{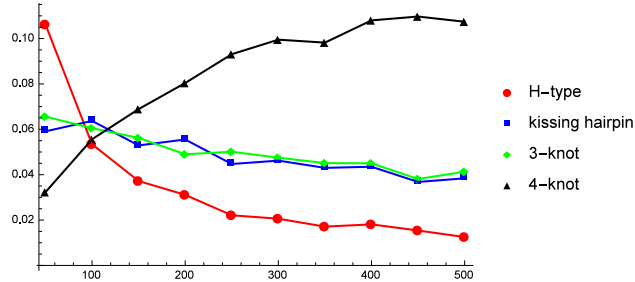}
	\caption
	    {\small The expectation of different types of pseudoknots in $10^5$ uniformly generated RNA structures of genus
              $2$ as a function of sequence length.
	}
	\label{F:p-Loop2}
\end{figure}


This paper is organized as follows: In Section~\ref{S:Back}, we provide some basic facts of the fatgraph model, linear chord diagram
and RNA secondary structures. We compute the bivariate generating function of structures of length $n$ having $l$ arcs  and fixed
genus and its
asymptotic expansion in Section~\ref{S:GF}. In Section~\ref{S:PT}, we  prove a central limit theorem for the distribution of the
number of arcs. In Section~\ref{S:other}, we extend these results to all other loop-types.
We present the block decomposition for pseudoknot structures in Section~\ref{S:loop}
and compute the expectation values of various types of irreducible shadows in Section~\ref{S:ploop}. In Section~\ref{S:pf} we present
all proofs. We conclude with Section~\ref{S:Dis}, where we integrate and discuss our findings.


\section{Basic facts}\label{S:Back}




A \emph{diagram} (or partial linear chord diagram in~\citet{reidys:2013})  is a labeled graph over the vertex set $\{1, \dots, n\}$  whose vertices are arranged in a horizontal line and arcs are drawn in the upper half-plane. Clearly, vertices and arcs correspond to nucleotides and base pairs, respectively. The number of nucleotides
is called the length of the structure. The length of an arc $(i,j)$ is defined as $j-i$ and an arc of length $k$ is called a $k$-arc.
The backbone of a diagram is the sequence of consecutive integers $(1,\dots,n)$ together with the edges $\{\{i,i+1\}\mid 1\le i\le n-1\}$.
We shall distinguish the backbone edge $\{i,i+1\}$ from the arc $(i,i+1)$, which we refer to as a \emph{$1$-arc}.
Two arcs $(i_1,j_1)$ and $(i_2,j_2)$ are \emph{crossing} if $i_1<i_2<j_1<j_2$.  An
RNA \emph{secondary structure} is defined as a diagram without $1$-arcs and crossing arcs~\citep{Waterman:78s}.
Pairs of nucleotides may form Watson-Crick \textbf{A-U}, \textbf{C-G} and wobble \textbf{U-G} bonds labeling the above mentioned
arcs.

To extract topological properties of the cross-serial interactions in pseudoknot structures, we need to  enrich diagrams to fatgraphs.
By gluing the sides of ribbons these induce orientable surfaces, whose topological genera give rise to a filtration.
Combinatorially, a \emph{fatgraph} is a graph  together with a collection of cyclic orderings on the half-edges incident to each vertex
and is usually obtained by expanding each vertex to a disk and fattening the edges into (untwisted) ribbons or bands  such that the
ribbons connect the disks ingiven cyclic orderings. The specific drawing of a diagram $G$ with its arcs in the upper half-plane determines
a collection of cyclic orderings on the half-edges of the underlying graph incident on each vertex,
thus defining a corresponding fatgraph $\mathbb{G}$, see Fig.~\ref{F:fat}.
Accordingly, each fatgraph $\mathbb{G}$ determines an associated orientable surface
$F(\mathbb{G})$ with boundary \citep{Loebl:08,penner:2010}, which
contains $G$ as a deformation retract \citep{Massey:69}, see Fig.~\ref{F:fat}.
Fatgraphs were first applied to RNA secondary structures in
\cite{Waterman:93} and \cite{Penner:03}.

\begin{figure}
	\centering
	\includegraphics[width=1\textwidth]{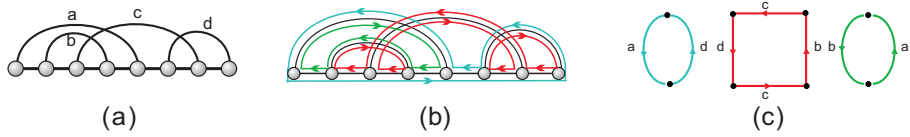}
	\caption
	{\small A diagram, its corresponding fatgraph and boundary components. (a) A diagram $G$ of genus $1$. (b) Its corresponding fatgraph $\mathbb{G}$ represented by an orientable surface
		$F(\mathbb{G})$ with three boundary components. (c) Three boundary components viewed as polygons.
	}
	\label{F:fat}
\end{figure}
The surface $F(\mathbb{G})$ is characterized up to homeomorphism by its genus $g \geq 0$ and the number $r \geq 1$ of boundary components,
which are associated to $G$ itself. Filling the boundary components with polygons we can
pass from $F(\mathbb{G})$ to a surface without boundary.
Euler characteristic, $\chi$, and genus, $g$, of this surface are connected via
$$
\chi =  v - e + r\quad\text{\rm and }\quad
g  =  1-\frac{1}{2}\chi,
$$
where $v,e,r$ denote the number of disks, ribbons and boundary components in
$\mathbb{G}$, \cite{Massey:69}. Any topological RNA structure having genus greater than or equal to one is referred to as RNA pseudoknot structure (pk-structure). We shall use the term topological structure when we wish to emphasize their filtration by topological genus.

In this paper, we consider RNA structures subject to two types of restrictions
\begin{itemize}
	\item \emph{Minimum arc-length} restrictions, arising from the rigidity of the backbone. RNA secondary structures having minimum arc-length two were studied by Waterman \citep{Waterman:78s}. Arguably, the most realistic cases is $\lambda=4$~\citep{Stein:79}, and
	RNA folding algorithms, generating minimum free energy structures, implicitly satisfy this
	constraint\footnote{each hairpin loop contains at least three unpaired bases} for energetic reasons,
	\item \emph{Minimum stack-length} restrictions.  A \emph{stack} of length $r$ is a maximal sequence of "parallel" arcs, $((i,j),(i+1,j-1),\ldots,(i+(r-1),j-(r-1)))$.
	Stacks of length $1$
	are energetically unstable and we find typically stacks of length at least two or three in biological structures~\citep{Waterman:78s}.
	A structure, $S$, is \emph{$r$-canonical} if any of its stacks has length at least three.
\end{itemize}

Let $d_{g,\lambda}^{[r]} (n)$  denote the
number of $r$-canonical topological RNA structures
of $n$ nucleotides and genus $g$, with minimum arc-length $\lambda$. Let furthermore
$d_{g,\lambda}^{[r]}(n,l)$  denote the number of $r$-canonical topological RNA structures of genus $g$  filtered by the number of arcs. Let $\mathbf{D}_{g,\lambda}^{[r]} (x,y)=\sum_{n,l} d_{g,\lambda}^{[r]}(n,l)x^n y^l$  denote the corresponding bivariate generating function.

We shall write $d_{g} (n,l)$ and $\mathbf{D}_{g}(x,y)$ instead of $d_{g,\lambda}^{[r]} (n,l)$ and $\mathbf{D}_{g,\lambda}^{[r]} (x,y)$. 


A \emph{linear chord diagram} is a diagram without unpaired vertices.
Three decades ago, \citet{Harer:86} discovered the celebrated two-term recursion for the number $c_g(n)$ of linear chord diagrams of genus $g$ with $n$ arcs.
\begin{theorem}[\citet{Harer:86}]
	The number 	$c_g(n)$ satisfy the recursion
	\begin{equation}\label{E:bb}
		(n+1)c_g(n)=2(2n-1)c_g(n-1)+ (n-1) (2n-1) (2n-3)c_{g-1}(n-2),
	\end{equation}
	where $c_g(n)=0$ for $2g>n$.
\end{theorem}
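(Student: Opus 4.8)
The plan is to pack the numbers $c_g(n)$, $g\ge 0$, into a single polynomial in an auxiliary variable $N$,
\begin{equation}\label{E:Gammapoly}
\Gamma_n(N):=\sum_{g\ge 0}c_g(n)\,N^{\,n+1-2g},
\end{equation}
and to deduce \eqref{E:bb} from a \emph{one}-term recursion for $\Gamma_n$ in $n$. Since $N\,\Gamma_{n-1}(N)=\sum_{g\ge 0}c_g(n-1)\,N^{\,n+1-2g}$ and $\Gamma_{n-2}(N)=\sum_{g\ge 1}c_{g-1}(n-2)\,N^{\,n+1-2g}$, comparing coefficients of $N^{\,n+1-2g}$ shows that \eqref{E:bb} is, for every fixed $n\ge 1$ and every positive integer $N$, equivalent to
\begin{equation}\label{E:Gammarec}
(n+1)\,\Gamma_n(N)=2(2n-1)\,N\,\Gamma_{n-1}(N)+(n-1)(2n-1)(2n-3)\,\Gamma_{n-2}(N);
\end{equation}
the boundary condition $c_g(n)=0$ for $2g>n$ merely records that $\Gamma_n$, of degree $n+1$, carries no monomial of degree $\le 0$. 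So it suffices to prove \eqref{E:Gammarec}.

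The key input is a closed form for $\Gamma_n$. Identifying a genus-$g$ linear chord diagram with $n$ arcs with a gluing in pairs of the $2n$ sides of a polygon that yields an orientable surface of genus $g$, Wick's theorem turns the sum \eqref{E:Gammapoly} into the Gaussian matrix integral $\Gamma_n(N)=\mathbb{E}_{\mathrm{GUE}(N)}\bigl[\operatorname{Tr}M^{2n}\bigr]$. I would then invoke (or reprove) the classical Harer--Zagier evaluation of this integral --- obtainable via Hermite polynomials, via Zagier's Laplace-transform manipulation, or by a direct combinatorial count of the polygon gluings --- namely
\begin{equation}\label{E:HZclosed}
\Gamma_n(N)=(2n-1)!!\sum_{k\ge 0}2^{k}\binom{n}{k}\binom{N}{k+1},
\qquad\text{or equivalently}\qquad
1+2\sum_{n\ge 0}\frac{\Gamma_n(N)}{(2n-1)!!}\,x^{n+1}=\Bigl(\tfrac{1+x}{1-x}\Bigr)^{N}.
\end{equation}
Establishing \eqref{E:HZclosed} is the one genuinely nontrivial step; everything afterwards is formal bookkeeping.

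Granting \eqref{E:HZclosed}, put $C_n:=\Gamma_n(N)/(2n-1)!!$ and $Q(x):=\tfrac12\bigl[\bigl(\tfrac{1+x}{1-x}\bigr)^{N}-1\bigr]=\sum_{n\ge 0}C_n\,x^{n+1}$. Dividing \eqref{E:Gammarec} by $(2n-1)!!=(2n-1)(2n-3)(2n-5)!!$ collapses it to $(n+1)C_n=2N\,C_{n-1}+(n-1)\,C_{n-2}$, which in terms of $Q$ is exactly the first-order differential equation $(1-x^2)\,Q'(x)=2N\,Q(x)+N$. Differentiating $Q$ directly gives $Q'(x)=N\,(1+x)^{N-1}(1-x)^{-N-1}$, hence $(1-x^2)\,Q'(x)=N\bigl(\tfrac{1+x}{1-x}\bigr)^{N}=2N\,Q(x)+N$, which is precisely that equation; extracting the coefficient of $x^{n}$ for $n\ge 1$ recovers $(n+1)C_n=2N\,C_{n-1}+(n-1)\,C_{n-2}$, hence \eqref{E:Gammarec}, and comparing coefficients of $N^{\,n+1-2g}$ then gives \eqref{E:bb}. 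The only obstacle I anticipate is \eqref{E:HZclosed}: once that identity is available, the recursion follows from the one-line verification of the differential equation above.
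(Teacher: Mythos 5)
The paper does not actually prove this statement: it is imported verbatim from Harer and Zagier (1986), so there is no in-paper argument to compare against, and your proposal must be judged on its own. Your outline is the classical Harer--Zagier derivation, and every formal step checks out: packing the $c_g(n)$ into $\Gamma_n(N)=\sum_g c_g(n)N^{n+1-2g}$ and comparing coefficients of $N^{n+1-2g}$ does convert eq.~(\ref{E:bb}) into the single recursion $(n+1)\Gamma_n=2(2n-1)N\Gamma_{n-1}+(n-1)(2n-1)(2n-3)\Gamma_{n-2}$; dividing by $(2n-1)!!$ collapses this to $(n+1)C_n=2NC_{n-1}+(n-1)C_{n-2}$; and that is exactly the coefficient form of $(1-x^2)Q'(x)=2NQ(x)+N$ (the constant term $N$ being supplied by $C_0=\Gamma_0(N)=N$), which your differentiation of $Q(x)=\tfrac12\bigl[\bigl(\tfrac{1+x}{1-x}\bigr)^{N}-1\bigr]$ verifies in one line. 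The identification of $c_g(n)$ with polygon gluings is also legitimate, since contracting the backbone path to a single vertex is a homotopy equivalence of the associated surface and preserves both genus and the number of boundary components, so the exponent $n+1-2g$ is indeed the boundary count. The one real reservation is the one you flag yourself: the closed form $\Gamma_n(N)=(2n-1)!!\sum_{k}2^{k}\binom{n}{k}\binom{N}{k+1}$ is invoked rather than proved, and it carries essentially all of the substance --- the recursion is not appreciably easier than the closed form, and in Harer and Zagier's paper the recursion is obtained from it in precisely the manner you describe. So as a self-contained proof the proposal is incomplete at that single, clearly identified point; the reduction built around it is correct and is the standard route to eq.~(\ref{E:bb}).
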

For genus $0$, the number $c_0(n)$ is given by the Catalan numbers, with generating function ${\bf C}_0(x)= \frac{1-\sqrt{1-4x}}{2x}$. For genus $\geq 1$, the following form for the generating function $\mathbf{C}_g(x)$ is due to \citet{Harer:86} (see also~\citet{Huang:14}).

\begin{theorem}[\citet{Harer:86}]\label{T:c}
	For any $g\geq 1$, the generating function $\mathbf{C}_g(x)$ of linear chord diagrams of genus $g$ is given by
	\[
	\mathbf{C}_g(x)=\sum_{n=2g}^{3g-1} \frac{\kappa_{g}(n) x^{n}} {(1-4x)^{n+\frac{1}{2}}}.
	\]
\end{theorem}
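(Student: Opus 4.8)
The plan is to prove this by induction on $g$, translating the two-term recursion~\eqref{E:bb} into a first-order linear ODE that expresses $\mathbf{C}_g$ in terms of $\mathbf{C}_{g-1}$, and then tracking the shape of the solution. Writing $\theta:=x\frac{d}{dx}$, multiplying~\eqref{E:bb} by $x^n$ and summing over $n$ turns the three terms into $(\theta+1)\mathbf{C}_g$, $2x(2\theta+1)\mathbf{C}_g$ and $x^2(\theta+1)(2\theta+1)(2\theta+3)\mathbf{C}_{g-1}$, which rearranges into
\begin{equation*}
  x(1-4x)\,\mathbf{C}_g'(x)+(1-2x)\,\mathbf{C}_g(x)=x^2\,\Lambda\mathbf{C}_{g-1}(x),\qquad \Lambda:=(\theta+1)(2\theta+1)(2\theta+3).
\end{equation*}
The associated homogeneous equation has solution $\sqrt{1-4x}/x$, so with integrating factor $x/\sqrt{1-4x}$ one obtains
\begin{equation*}
  \frac{x}{\sqrt{1-4x}}\,\mathbf{C}_g(x)=\int \frac{x^2\,\Lambda\mathbf{C}_{g-1}(x)}{(1-4x)^{3/2}}\,dx,
\end{equation*}
where the antiderivative is normalized to be an honest power series — this selects the particular solution $\mathbf{C}_g$.

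Next I would substitute $z=\frac{x}{1-4x}$ (so $1-4x=(1+4z)^{-1}$ and $dx=(1+4z)^{-2}\,dz$) and write $\mathbf{C}_g(x)=\sqrt{1+4z}\,F_g(z)$. Since $\frac{x^m}{(1-4x)^{m+1/2}}=\sqrt{1+4z}\,z^m$, the assertion is equivalent to: $F_g$ is a polynomial with $\operatorname{val}F_g\ge 2g$ and $\deg F_g\le 3g-1$ (then $\kappa_g(n)=[z^n]F_g$). A direct computation gives $\theta\bigl[\frac{x^m}{(1-4x)^{m+1/2}}\bigr]=m\frac{x^m}{(1-4x)^{m+1/2}}+(4m+2)\frac{x^{m+1}}{(1-4x)^{m+3/2}}$, i.e.\ conjugation by $\sqrt{1+4z}$ carries $\theta$ to $\widetilde\theta:=z(1+4z)\frac{d}{dz}+2z$ on $\mathbb{C}[z]$. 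Defining $G_{g-1}$ by $\Lambda\mathbf{C}_{g-1}=\sqrt{1+4z}\,G_{g-1}(z)$ — so that $G_{g-1}=\widetilde\Lambda F_{g-1}$ with $\widetilde\Lambda:=(\widetilde\theta+1)(2\widetilde\theta+1)(2\widetilde\theta+3)$ once $\mathbf{C}_{g-1}$ has the asserted form — the ODE becomes $\frac{d}{dz}\bigl(zF_g(z)\bigr)=\frac{z^2\,G_{g-1}(z)}{(1+4z)^2}$. For the base case $g=1$, $(\theta+1)\mathbf{C}_0=\frac{d}{dx}\bigl(x\mathbf{C}_0\bigr)=(1-4x)^{-1/2}$, then $(2\theta+1)(1-4x)^{-1/2}=(1-4x)^{-3/2}$ and $(2\theta+3)(1-4x)^{-3/2}=3(1-4x)^{-5/2}$, so $\Lambda\mathbf{C}_0=3(1-4x)^{-5/2}$, hence $G_0=3(1+4z)^2$ and $\frac{d}{dz}(zF_1)=3z^2$ yields $F_1(z)=z^2$, i.e.\ $\mathbf{C}_1(x)=\frac{x^2}{(1-4x)^{5/2}}$.

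For the inductive step $g\ge 2$, assume $F_{g-1}\in\mathbb{C}[z]$ with $\operatorname{val}F_{g-1}\ge 2(g-1)$ and $\deg F_{g-1}\le 3(g-1)-1$. Since $\widetilde\theta$ raises degree by exactly one and does not lower the valuation of a polynomial vanishing at $0$, the polynomial $G_{g-1}=\widetilde\Lambda F_{g-1}$ satisfies $\operatorname{val}G_{g-1}\ge 2(g-1)$ and $\deg G_{g-1}\le 3g-1$. The heart of the argument — and the step I expect to be the main obstacle — is the divisibility lemma: $\widetilde\Lambda$ maps $\mathbb{C}[z]$ into $(1+4z)^2\,\mathbb{C}[z]$. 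I would establish it by looking at the $2$-jet $\bigl(p(z_0),p'(z_0)\bigr)$ at $z_0=-\tfrac14$: there the coefficient $z(1+4z)$ of $p'$ in $\widetilde\theta p$ (and of $p''$ in $(\widetilde\theta p)'$) vanishes, so the $2$-jet of $\widetilde\theta p$ depends linearly only on that of $p$, via the matrix $M=\bigl(\begin{smallmatrix}-1/2 & 0\\ 2 & -3/2\end{smallmatrix}\bigr)$; hence the $2$-jet of $\widetilde\Lambda p$ equals $(M+I)(2M+I)(2M+3I)$ applied to the $2$-jet of $p$, and one checks that $(2M+I)(2M+3I)=0$, so this product vanishes. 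Thus $\widetilde\Lambda p$ and its derivative vanish at $z_0$, i.e.\ $(1+4z)^2\mid\widetilde\Lambda p$. Writing $G_{g-1}=(1+4z)^2\,T_{g-1}$ with $T_{g-1}\in\mathbb{C}[z]$, $\operatorname{val}T_{g-1}\ge 2(g-1)$, $\deg T_{g-1}\le 3g-3$, the ODE gives $zF_g(z)=\int_0^z \zeta^2 T_{g-1}(\zeta)\,d\zeta$, a polynomial with $\operatorname{val}\ge 2g+1$ and $\deg\le 3g$. Hence $F_g\in\mathbb{C}[z]$ with $\operatorname{val}F_g\ge 2g$ and $\deg F_g\le 3g-1$, so $\mathbf{C}_g(x)=\sqrt{1+4z}\,F_g(z)=\sum_{n=2g}^{3g-1}\kappa_g(n)\frac{x^n}{(1-4x)^{n+1/2}}$, completing the induction. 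Everything apart from the divisibility lemma is routine generating-function bookkeeping; that lemma is precisely where the specific constants in $2\theta+1$, $2\theta+3$ (equivalently, the coefficients in~\eqref{E:bb}) are used.
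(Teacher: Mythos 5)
The paper itself does not prove Theorem~\ref{T:c}: it is quoted from \citet{Harer:86}, and the remark after Theorem~\ref{T:s} points to \citet{Li:14,Li-Phd} for proofs. So there is no in-paper argument to compare against; judged on its own, your derivation from the recursion~(\ref{E:bb}) (which the paper does state) is correct and self-contained. I checked the main steps: the translation of eq.~(\ref{E:bb}) into $x(1-4x)\mathbf{C}_g'+(1-2x)\mathbf{C}_g=x^2\Lambda\mathbf{C}_{g-1}$ is right, and since the homogeneous solution $\sqrt{1-4x}/x$ is not a power series, the power-series solution is unique and must equal $\mathbf{C}_g$; the conjugated operator is indeed $\widetilde\theta=z(1+4z)\frac{d}{dz}+2z$; the base case gives $\Lambda\mathbf{C}_0=3(1-4x)^{-5/2}$, hence $F_1(z)=z^2$ and $\mathbf{C}_1=x^2(1-4x)^{-5/2}$, matching $\kappa_1(2)=1$; and in the divisibility lemma the $2$-jet matrix $M$ at $z_0=-\tfrac14$ is lower triangular with eigenvalues $-\tfrac12$ and $-\tfrac32$, so $(2M+I)(2M+3I)=4(M+\tfrac12 I)(M+\tfrac32 I)=0$ by Cayley--Hamilton, which is exactly the vanishing you need for $(1+4z)^2\mid\widetilde\Lambda p$. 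The degree and valuation bookkeeping then closes the induction and explains why the upper limit advances by exactly three per unit of genus ($\widetilde\Lambda$ adds three to the degree, dividing by $(1+4z)^2$ removes two, and integrating $z^2 T_{g-1}$ adds three more to $zF_g$). What your route buys over the bare citation is an elementary proof using only eq.~(\ref{E:bb}) and power-series manipulation; as a bonus, extracting coefficients from $\frac{d}{dz}(zF_g)=z^2 G_{g-1}(z)/(1+4z)^2$ is the natural source of the $\kappa_g$ recursion~(\ref{E:cc}) stated next in the paper. Two small points deserve a sentence each in a written-up version: (i) the recursion must be summed over all $n\ge 0$ with the stated vanishing conventions, so the boundary terms contribute nothing for $g\ge 1$; and (ii) $\widetilde\theta$ cannot lower valuation because it sends $z^m$ to a combination of $z^m$ and $z^{m+1}$, which is what guarantees $\operatorname{val}F_g\ge 2g$.
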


\begin{theorem}[\citet{Li:14,Li-Phd}]
	The numbers $\kappa_{g}(n)$ are positive integers that satisfy an analogue of eq.~(\ref{E:bb})
	\begin{equation}\label{E:cc}
		(n+1)\kappa_{g}(n) =(n-1)(2n-1)(2n-3)
		\kappa_{g-1}(n-2)+ 2 (2n-1) (2n-3)(2n-5)\kappa_{g-1}(n-3),
	\end{equation}
	where $ \kappa_{1}(2)=1$ and $\kappa_{g}(n)=0$ if $n<2g$ or $n>3g-1$.
\end{theorem}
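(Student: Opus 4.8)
The plan is to lift Harer's two-term recursion~(\ref{E:bb}) to a differential equation relating $\mathbf{C}_g(x)$ to $\mathbf{C}_{g-1}(x)$, substitute the closed form of Theorem~\ref{T:c}, and then read off~(\ref{E:cc}) by comparing Taylor coefficients in a well-chosen variable. Multiplying~(\ref{E:bb}) by $x^n$ and summing over $n\ge 1$ turns it, after the usual bookkeeping and with $\theta:=x\frac{d}{dx}$ and $p(X):=(X+1)(2X+1)(2X+3)$, into the identity
\[
(1-2x)\,\mathbf{C}_g(x)+x(1-4x)\,\mathbf{C}_g'(x)=x^{2}\,p(\theta)\,\mathbf{C}_{g-1}(x),
\]
valid for every $g\ge 1$. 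For $g\ge 2$ I would then insert $\mathbf{C}_h(x)=(1-4x)^{-1/2}P_h(z)$ for $h\in\{g-1,g\}$, where $z=\frac{x}{1-4x}$ and $P_h(z)=\sum_n\kappa_h(n)z^n$, which is a polynomial of degree $3h-1$ by Theorem~\ref{T:c}.

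The virtue of the substitution $z=\frac{x}{1-4x}$ is that $1-4x=(1+4z)^{-1}$, $x\frac{d}{dx}=z(1+4z)\frac{d}{dz}$ and $x(1-4x)\frac{d}{dx}=z\frac{d}{dz}$, so the left-hand side collapses to $(1+4z)^{1/2}\frac{d}{dz}(zP_g)=(1+4z)^{1/2}\sum_n(n+1)\kappa_g(n)z^n$. For the right-hand side the key point is that conjugating $\theta$ by the gauge factor $(1+4z)^{1/2}$ produces the operator $L:=z(1+4z)\frac{d}{dz}+2z$, for which $p(\theta)\bigl[(1+4z)^{1/2}P_{g-1}\bigr]=(1+4z)^{1/2}p(L)P_{g-1}$, and that $L$ factors cleanly on monomials: $(2L+1)z^m=(2m+1)(1+4z)z^m$, hence $(2L+1)(2L+3)z^m=(2m+1)(2m+3)(1+4z)^2z^m$, and finally $(L+1)\bigl[(1+4z)^2z^m\bigr]=(1+4z)^2z^m\bigl[(m+1)+(4m+10)z\bigr]$. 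Thus the two factors $(1+4z)^2$ cancel against the $z^2/(1+4z)^2$ coming from $x^2$, and the right-hand side equals $(1+4z)^{1/2}\sum_m\kappa_{g-1}(m)\bigl[(m+1)(2m+1)(2m+3)z^{m+2}+2(2m+1)(2m+3)(2m+5)z^{m+3}\bigr]$. Cancelling $(1+4z)^{1/2}$ and equating the coefficient of $z^n$ gives~(\ref{E:cc}) for all $g\ge 2$; since $\kappa_{g-1}$ is supported on $[2(g-1),3(g-1)-1]=[2g-2,3g-4]$, the right-hand side is supported on $[2g,3g-1]$, which matches the support of $\kappa_g$ and reproduces the stated vanishing outside that range.

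For the base case, Theorem~\ref{T:c} gives $\mathbf{C}_1(x)=\kappa_1(2)\,x^2(1-4x)^{-5/2}$, while $c_1(2)=1$ (the unique genus-one chord diagram on two arcs), so comparing the coefficient of $x^2$ forces $\kappa_1(2)=1$. Positivity of all $\kappa_g(n)$ then follows by induction on $g$: every coefficient on the right of~(\ref{E:cc}) is a positive integer, and for $2g\le n\le 3g-1$ at least one of $n-2$, $n-3$ lies in the support $[2g-2,3g-4]$ of $\kappa_{g-1}$, which is positive by the inductive hypothesis. For integrality, extracting $[x^n]$ in Theorem~\ref{T:c} gives $c_g(n)=\sum_{m}\kappa_g(m)\binom{n-1/2}{n-m}4^{\,n-m}$, and $\binom{n-1/2}{n-m}4^{\,n-m}=[x^{n-m}](1-4x)^{-m-1/2}$ is a non-negative integer (a product of the integral power series $(1-4x)^{-m}$ and $(1-4x)^{-1/2}$) which equals $1$ when $n=m$ and vanishes when $n<m$; solving the resulting lower-unitriangular system over $n=2g,\dots,3g-1$ expresses each $\kappa_g(m)$ as an integer combination of the integers $c_g(n)$.

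I expect the operator computation on the right-hand side to be the crux: one must verify that the a priori rational function $\frac{z^2}{(1+4z)^2}p(L)P_{g-1}$ is genuinely a polynomial whose coefficients are exactly those appearing in~(\ref{E:cc}). This reduces to the three factorisation identities above — equivalently, to the fact that $z=-\tfrac14$ is a double root of $(m+1)+6(2m+3)z+48(m+2)z^2+32(2m+5)z^3$ for every $m$ — together with the routine bookkeeping that turns~(\ref{E:bb}) into the displayed differential equation.
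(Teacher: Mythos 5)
The paper does not actually prove this statement: it is quoted from \citet{Li:14,Li-Phd}, and Section~\ref{S:pf} contains no argument for it, so there is no in-paper proof to compare against. Judged on its own, your derivation is correct and self-contained. I checked the key steps: summing eq.~(\ref{E:bb}) against $x^n$ does give $(1-2x)\mathbf{C}_g+x(1-4x)\mathbf{C}_g'=x^2p(\theta)\mathbf{C}_{g-1}$ with $p(X)=(X+1)(2X+1)(2X+3)$; under $z=x/(1-4x)$ the left side is $(1+4z)^{1/2}\sum_n(n+1)\kappa_g(n)z^n$; the gauge-conjugated operator is indeed $L=z(1+4z)\frac{d}{dz}+2z$; and the three factorisation identities $(2L+1)z^m=(2m+1)(1+4z)z^m$, $(2L+1)(2L+3)z^m=(2m+1)(2m+3)(1+4z)^2z^m$, $(L+1)[(1+4z)^2z^m]=(1+4z)^2z^m[(m+1)+(4m+10)z]$ all verify, so that the $(1+4z)^2$ cancels against $x^2=z^2/(1+4z)^2$ and comparing $[z^n]$ yields exactly eq.~(\ref{E:cc}) after the substitutions $m=n-2$ and $m=n-3$. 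Your support analysis $[2g-2,3g-4]+\{2,3\}=[2g,3g-1]$, the base case $\kappa_1(2)=c_1(2)=1$, the positivity induction, and the lower-unitriangular integrality argument via $c_g(n)=\sum_m\kappa_g(m)\binom{n-1/2}{n-m}4^{n-m}$ are all sound (the last is genuinely needed, since the factor $n+1$ on the left of~(\ref{E:cc}) makes integrality non-obvious from the recursion alone). The only caveats are presentational: you should state explicitly that the $\kappa_g(n)$ are well defined because the representation in Theorem~\ref{T:c} is unique (the functions $x^n(1-4x)^{-n-1/2}$ are linearly independent), and that the recursion for $g=1$ is vacuous so that $\kappa_1(2)=1$ must be supplied as an initial condition, which you do.
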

A \emph{shape} is a linear chord diagram  without $1$-arcs in which every stack has length one. For $g\geq 1$, let $s_g(n)$ be the number of shapes of genus $g$ with $n$ arcs
and $\mathbf{S}_g(x)$ denote the corresponding generating polynomial
$\mathbf{S}_g(x)=\sum_{n=2g}^{6g-1} s_g(n) x^n$.
\begin{theorem}[\cite{Huang:14}]\label{T:s}
	For any $g\geq 1$, the generating polynomial of shapes is given by
	\[
	\mathbf{S}_g(x)=\sum_{n=2g}^{3g-1} \kappa_{g}(n)\, x^{n} (1+x)^{n+1}.
	\]
\end{theorem}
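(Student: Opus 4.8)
The plan is to derive the formula from Harer's closed expression for $\mathbf{C}_g$ (Theorem~\ref{T:c}) by means of a functional equation relating $\mathbf{S}_g$, $\mathbf{C}_g$ and the Catalan series $\mathbf{C}_0(x)=\frac{1-\sqrt{1-4x}}{2x}$, followed by an explicit change of variable. Precisely, I would first establish
\begin{equation}\label{E:Sg-fe}
	\mathbf{C}_g(x)\;=\;\mathbf{C}_0(x)\,\mathbf{S}_g\!\left(\frac{x\,\mathbf{C}_0(x)^{2}}{1-x\,\mathbf{C}_0(x)^{2}}\right),\qquad g\ge 1,
\end{equation}
and then invert~\eqref{E:Sg-fe} and plug in Theorem~\ref{T:c}.

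The content of~\eqref{E:Sg-fe} is a decomposition of linear chord diagrams through their shapes, refining the invariance of the genus under collapsing stacks and under deleting $1$-arcs noted above. I would show that every linear chord diagram of genus $g$ arises in a unique way from a genus-$g$ shape $\sigma$ with $k$ arcs in two stages: (i) \emph{inflate} each arc of $\sigma$ into a stack of arbitrary length $\ge 1$, producing a diagram without $1$-arcs having $m\ge k$ arcs --- hence $2m$ vertices --- in $\binom{m-1}{k-1}$ ways; (ii) insert an arbitrary, possibly empty, \emph{noncrossing} chord diagram into each of the $2m+1$ intervals cut out by those $2m$ vertices. Neither stage changes the genus --- (i) is a stack move and (ii) is built from genus-zero nesting and concatenation --- and conversely one recovers $\sigma$ from a diagram by undoing (ii), i.e.~deleting its maximal insertable noncrossing substructures, and then undoing (i), i.e.~collapsing every stack to length one; so the correspondence is a bijection. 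Passing to generating functions, stage~(i) attaches the variable $x$ to each of the $m$ arcs and stage~(ii) attaches a factor $\mathbf{C}_0(x)$ to each of the $2m+1$ intervals; rewriting the resulting series as $\mathbf{C}_0(x)$ (one interval left over) times, for each of the $k$ arcs of $\sigma$, a nonempty sequence of blocks $x\,\mathbf{C}_0(x)^{2}$, and then summing over all genus-$g$ shapes using the Catalan identity $x\,\mathbf{C}_0(x)^{2}=\mathbf{C}_0(x)-1$, yields exactly~\eqref{E:Sg-fe}. I expect this to be the main obstacle: making the $2m+1$ insertion sites precise and proving that the resulting map is well defined and bijective is delicate, and amounts to the shape-decomposition theorem underlying~\cite{Huang:14}.

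Granting~\eqref{E:Sg-fe}, the remainder is bookkeeping. Put $w=x\,\mathbf{C}_0(x)^{2}=\mathbf{C}_0(x)-1$, so that the argument of $\mathbf{S}_g$ in~\eqref{E:Sg-fe} equals $u:=\frac{w}{1-w}$. Inverting, $w=\frac{u}{1+u}$, hence $\mathbf{C}_0(x)=1+w=\frac{1+2u}{1+u}$ and $x=\frac{w}{\mathbf{C}_0(x)^{2}}=\frac{u(1+u)}{(1+2u)^{2}}$; thus~\eqref{E:Sg-fe} reads
\begin{equation*}
	\mathbf{S}_g(u)=\frac{1}{\mathbf{C}_0(x)}\,\mathbf{C}_g(x)=\frac{1+u}{1+2u}\,\mathbf{C}_g\!\left(\frac{u(1+u)}{(1+2u)^{2}}\right).
\end{equation*}
Now substitute Theorem~\ref{T:c}. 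With $z=\frac{u(1+u)}{(1+2u)^{2}}$ one checks directly that $1-4z=(1+2u)^{-2}$, whence $z^{\,n}(1-4z)^{-n-\frac12}=u^{\,n}(1+u)^{n}(1+2u)$ for every $n$, and therefore
\begin{align*}
	\mathbf{S}_g(u)&=\frac{1+u}{1+2u}\sum_{n=2g}^{3g-1}\kappa_{g}(n)\,z^{\,n}(1-4z)^{-n-\frac12}\\
	&=\sum_{n=2g}^{3g-1}\kappa_{g}(n)\,u^{\,n}(1+u)^{n+1},
\end{align*}
which is the asserted identity after renaming $u$ to $x$. As consistency checks: the top term $n=3g-1$ contributes degree $6g-1$, matching the stated range of $\mathbf{S}_g$, while the bottom term $n=2g$ gives $s_g(2g)=c_g(2g)$, as it must since a genus-$g$ diagram with the minimal number $2g$ of arcs is automatically a shape.
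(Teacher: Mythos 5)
Your proposal is correct and follows essentially the route the paper itself sets up: the paper does not reprove this cited result but states your key functional equation verbatim as Corollary~\ref{C:CS}, derived from the same arc-inflation used in the proof of Theorem~\ref{T:Dg} (organized there via stems and induced stacks separated by nonempty secondary structures, which is just a repackaging of your ``insert into all $2m+1$ gaps'' description and yields the identical per-arc factor $\tfrac{x\mathbf{C}_0^2}{1-x\mathbf{C}_0^2}$), after which your inversion $x=\tfrac{u(1+u)}{(1+2u)^2}$, $1-4x=(1+2u)^{-2}$ combined with Theorem~\ref{T:c} is exactly the computation in the cited sources. The one delicacy you rightly flag --- injectivity of the inflation --- is secured by the fact that shapes contain no $1$-arcs, so an inserted noncrossing block can never be parallel to a stem arc.
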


{\bf Remark.} Proofs of Theorems~\ref{T:c} and~\ref{T:s} can also be found in ~\citet{Li:14,Li-Phd}.
The notion of shape employed in this paper is slightly different from~\citet{Huang:14} and~\citet{Li:14}.
The notion therein considers a shape to be ``rainbow-free'', where a \emph{rainbow} is an arc connecting
the first and last vertices in a diagram. Allowing for rainbows makes the inflation to a structure more
transparent (Theorem~\ref{T:Dg}) and produces a shape polynomial, $\mathbf{S}_g(x)$, having degree $6g-1$
(Theorem~\ref{T:gasy}).

A \emph{shadow} is a shape without noncrossing arcs. A structure is projected to a shape by deleting all unpaired vertices, iteratively removing all $1$-arcs
and collapsing all stacks to single arcs. A shape can be further projected to a shadow by deleting all noncrossing arcs. These projections from structures to shapes and shadows do not affect genus, see Fig.~\ref{F:sha}.


\begin{figure}
	\centering
	\includegraphics[width=0.8\textwidth]{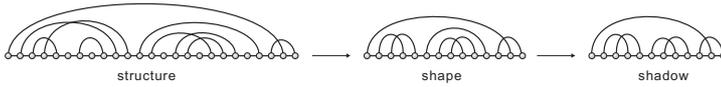}
	\caption
	{\small From structures to shapes and shadows: the projections preserve genus.
	}
	\label{F:sha}
\end{figure}

A linear chord diagram $D$ is called \emph{irreducible}, if and only if for any two arcs, $\alpha_1,\alpha_k$ contained in $D$, there exists a sequence of
arcs $(\alpha_1,\alpha_2,\dots,\alpha_{k-1},\alpha_k)$
such that $(\alpha_i,\alpha_{i+1})$ are crossing. Irreducibility
is equivalent to the concept of primitivity introduced by \cite{Bon:08}.
For arbitrary genus $g$ and
$2g\le\ell\le (6g-2)$, there exists an irreducible shadow of genus
$g$ having exactly $\ell$ arcs~\citep{Huang:11}.

Let $i_g(n)$ denote the number of irreducible shadows of genus $g$
with $n$ arcs, having its generating polynomial $ {\bf I}_g(x)=\sum_{n=2g}^{6g-2}\,i_g(n)x^n$.
For instance for genus $1$ and $2$ we have
\begin{align*}
	{\bf I}_1(x) &= {x}^{2} \left( 1+x \right) ^{2},\\
	{\bf I}_2(x) &= {x}^{4} \left( 1+x \right) ^{4} \left( 17+92\,x+96\,{x}^{2} \right).
\end{align*}
\cite{Han:14} provides a recursion for ${\bf I}_g(x)$.





For RNA secondary structures, the generating function $\mathbf{D}_0 (x,y)$ and its singular expansion are computed in~\citet{Barrett:16}.

\begin{theorem}[\citet{Barrett:16}]\label{T:arcgf}
	For any $\lambda, r\in \mathbb{N}$, the generating function $\mathbf{D}_0 (x,y)$ satisfies the functional equation
	\begin{equation}\label{Eq:arcfe}
		(x^2 y)^r  \mathbf{D}_0 (x,y)^2 -\mathbf{B} (x,y)\, \mathbf{D}_0 (x,y)+\mathbf{A}(x,y)=0,
	\end{equation}
	where
	\begin{align*}
		\mathbf{A}(x,y)&=1-x^2 y+(x^2 y)^r,\\
		\mathbf{B} (x,y)&=(1-x) \mathbf{A}(x,y) + 	(x^2 y)^r\sum_{i=0}^{\lambda-2} x^i.
	\end{align*}
	Explicitly, we have
	\begin{equation}\label{Eq:arcunex}
		\begin{aligned}
			\mathbf{D}_0 (x,y)&= \frac{\mathbf{B} (x,y)-\sqrt{\mathbf{B} (x,y)^2-4 (x^2 y)^r \mathbf{A}(x,y)}}{2 (x^2 y)^r},\\
			\mathbf{D}_0 (x,y)&= \frac{\mathbf{A}(x,y)}{\mathbf{B} (x,y)}\, {\bf
				C}_0\!\left(\frac{(x^2 y)^r \mathbf{A}(x,y)}{\mathbf{B} (x,y)^2}
			\right),
		\end{aligned}
	\end{equation}
	where	${\bf C}_0(x)= \frac{1-\sqrt{1-4x}}{2x}$.
\end{theorem}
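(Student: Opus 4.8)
The plan is to derive the quadratic functional equation~(\ref{Eq:arcfe}) via the standard symbolic decomposition of secondary structures à la Waterman, keeping track of the extra variable $y$ marking arcs, and then to solve the quadratic and recognize the solution in terms of $\mathbf{C}_0$. First I would set up the combinatorial specification. An $r$-canonical secondary structure with minimum arc-length $\lambda$ is either empty, or begins with an unpaired vertex followed by a structure, or begins with an arc $(1,j)$ whose stack has length $\geq r$; in the latter case the stack of length $r$ contributes a factor $(x^2y)^r$ (each of the $r$ arcs contributes $x^2y$), the region enclosed by the innermost arc of the stack is again an $r$-canonical structure subject to the arc-length constraint (which forces at least $\lambda-1$ vertices strictly between the endpoints, unless that region is empty in a way permitted when the stack can be longer), and the region to the right of vertex $j$ is an arbitrary $r$-canonical structure. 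The bookkeeping of the minimum arc-length $\lambda$ is exactly what produces the finite sum $\sum_{i=0}^{\lambda-2}x^i$ appearing in $\mathbf{B}(x,y)$, and the ``can the stack be extended'' correction is what produces the $(x^2y)^r$ terms in both $\mathbf{A}$ and $\mathbf{B}$. Translating this specification into generating functions yields a relation of the form $\mathbf{D}_0 = (\text{stuff}) + (x^2y)^r \mathbf{D}_0 \cdot \mathbf{D}_0 \cdot (\text{stuff})$, which rearranges to~(\ref{Eq:arcfe}) once $\mathbf{A}$ and $\mathbf{B}$ are collected.

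Next I would solve the quadratic. Equation~(\ref{Eq:arcfe}), viewed as a quadratic in $\mathbf{D}_0$, has the two roots $\bigl(\mathbf{B}\pm\sqrt{\mathbf{B}^2-4(x^2y)^r\mathbf{A}}\bigr)/(2(x^2y)^r)$; the combinatorially correct branch is the one that is a power series with nonnegative coefficients and satisfies $\mathbf{D}_0(0,0)=1$, which singles out the minus sign, giving the first displayed formula in~(\ref{Eq:arcunex}). For the second formula I would recall the defining identity $\mathbf{C}_0(z) = 1 + z\,\mathbf{C}_0(z)^2$, equivalently $z\,\mathbf{C}_0(z)^2 - \mathbf{C}_0(z) + 1 = 0$, so that $\mathbf{C}_0(z) = \bigl(1-\sqrt{1-4z}\bigr)/(2z)$. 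Substituting $z = (x^2y)^r\mathbf{A}/\mathbf{B}^2$ into $\tfrac{\mathbf{A}}{\mathbf{B}}\mathbf{C}_0(z)$ and simplifying the square root $\sqrt{1-4z} = \sqrt{1 - 4(x^2y)^r\mathbf{A}/\mathbf{B}^2} = \sqrt{\mathbf{B}^2 - 4(x^2y)^r\mathbf{A}}/\mathbf{B}$ (choosing the branch with value $1$ at the origin) reproduces exactly the first formula, so the two expressions in~(\ref{Eq:arcunex}) agree; alternatively one checks directly that $\tfrac{\mathbf{A}}{\mathbf{B}}\mathbf{C}_0\!\bigl((x^2y)^r\mathbf{A}/\mathbf{B}^2\bigr)$ satisfies~(\ref{Eq:arcfe}) by plugging into $z\mathbf{C}_0^2 - \mathbf{C}_0 + 1 = 0$.

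The main obstacle I anticipate is not the algebra of solving the quadratic but getting the combinatorial decomposition exactly right at the boundary cases — namely the precise interaction between the minimum stack-length $r$ and the minimum arc-length $\lambda$. One has to be careful about whether the loop immediately interior to a length-$r$ stack is allowed to be empty (it is, since the stack may genuinely have length $>r$, which is the source of the ``$+(x^2y)^r$'' in $\mathbf{A}$ and of the subtracted ``$-(x^2y)^r\sum_{i=0}^{\lambda-2}x^i$'' correction absorbed into $\mathbf{B}$), versus when it must contain at least $\lambda-1$ vertices. I would verify the resulting $\mathbf{A},\mathbf{B}$ by specializing: setting $y=1$ must recover the known univariate generating function $\mathbf{D}_0(x,1)$ for $r$-canonical secondary structures with arc-length $\lambda$ from~\citet{Waterman:78s}, and setting $r=1$, $\lambda=1$ should collapse $\mathbf{A}$ and $\mathbf{B}$ to the simplest Motzkin-like case; checking a few low-order coefficients $d_0(n,l)$ by hand against direct enumeration then pins down the formula. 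Once the specification is correct, everything else is a one-line application of the quadratic formula and the functional equation for $\mathbf{C}_0$.
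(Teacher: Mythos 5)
This theorem is not proved in the paper at all: it is imported verbatim from \citet{Barrett:16}, so there is no internal proof to compare your attempt against. That said, your proposal reconstructs what is essentially the standard derivation (and the one used in the cited source and, in inflated form, in the paper's own proof of Theorem~\ref{T:Dg}): a Waterman-style symbolic decomposition producing a quadratic in $\mathbf{D}_0$, followed by the quadratic formula and the identity $z\,\mathbf{C}_0(z)^2-\mathbf{C}_0(z)+1=0$ with $z=(x^2y)^r\mathbf{A}/\mathbf{B}^2$. The second half of your argument is complete and correct. The only real gap is that the first half leaves the crucial bookkeeping as ``(stuff)'': you correctly name the two delicate points (the $\geq\lambda-1$ interior vertices forced by arc-length, and the non-extendability of a maximal stack), but you never actually collect them into $\mathbf{A}$ and $\mathbf{B}$. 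The clean way to close this is to introduce the class $R$ of structures whose first and last vertices form an arc (a rainbow); then $T=1+xT+RT$ gives $R=((1-x)T-1)/T$, while peeling off the maximal stack gives $R=\frac{(x^2y)^r}{1-x^2y}\bigl(T-R-\sum_{i=0}^{\lambda-2}x^i\bigr)$, since the interior of the innermost stack arc is an arbitrary structure minus those with a rainbow (which would extend the stack) and minus the short unpaired runs of length $\le\lambda-2$ (excluded by arc-length). Solving the second relation for $R$ produces the denominator $1-x^2y+(x^2y)^r=\mathbf{A}$, and eliminating $R$ between the two relations yields exactly $(x^2y)^r T^2-\mathbf{B}T+\mathbf{A}=0$. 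With that step made explicit, your plan is a correct proof; your proposed sanity checks (Motzkin case $r=\lambda=1$, Waterman case $r=1$, $\lambda=2$) do confirm the formula.
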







\begin{theorem}[\cite{Barrett:16}]\label{T:asyexp}
	For $1\leq \lambda \leq 4$ and $1\leq r \leq 3$, $\mathbf{D}_0 (x,y)$ has the singular expansion
	\begin{equation}\label{Eq:0asy}
		\mathbf{D}_0 (x,y)= \pi(y) + \delta(y)\, \left(\rho(y)-x \right)^{\frac{1}{2}} \left(1+ o(1) \right),
	\end{equation}
	as $x\rightarrow \rho(y)$, uniformly for  $y$ restricted to a neighborhood of $1$, where $\pi(y)$ and $\delta(y)$ are analytic at 1 such that $\delta(1) \neq 0$, and $\rho (y)$ is the minimal positive,
	real solution of
	\[
	\mathbf{B} (x,y)^2 -4 (x^2 y)^r \mathbf{A}(x,y)=0,
	\]
	for $y$ in a neighborhood of $1$. In addition the coefficients of $\mathbf{D}_0 (x,y)$ are asymptotically given by
	\begin{equation} \label{Eq:arcsing}
		[x^{n} ] \mathbf{D}_0 (x,y) =c (y) n^{-\frac{3}{2} } \big(\rho (y)
		\big)^{-n}\ \big(1+O(n^{-1})\big),
	\end{equation}
	as $n \rightarrow \infty$, uniformly, for $y$ restricted to a small neighborhood of $1$, where
	$c (y)$ is continuous and nonzero near $1$.
\end{theorem}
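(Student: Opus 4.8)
The plan is to treat Theorem~\ref{T:asyexp} as a square-root singularity analysis of the explicit algebraic function supplied by Theorem~\ref{T:arcgf}, carried out carefully enough to keep everything uniform in $y$. Since $\mathbf{D}_0(x,y)=\sum_{n,l}d_0(n,l)x^ny^l$ has nonnegative coefficients, for real $y$ near $1$ its dominant singularity in $x$ is positive real (Pringsheim). From the second form in \eqref{Eq:arcunex},
\[
\mathbf{D}_0(x,y)=\frac{\mathbf{A}(x,y)}{\mathbf{B}(x,y)}\,{\bf C}_0\!\Bigl(\tfrac{(x^2y)^r\mathbf{A}(x,y)}{\mathbf{B}(x,y)^2}\Bigr),
\]
together with $\mathbf{A}(0,y)=\mathbf{B}(0,y)=1$ and ${\bf C}_0(0)=1$, the point $x=0$ is regular — so the apparent pole of the first form at $(x^2y)^r=0$ is spurious — and the only possible singularities are the zeros of the radicand
\[
\Delta(x,y):=\mathbf{B}(x,y)^2-4(x^2y)^r\mathbf{A}(x,y),
\]
of which $\rho(y)$ is by definition the minimal positive real one.

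Next I would establish the local structure at $x=\rho(y)$. At $y=1$, $\mathbf{D}_0(x,1)$ is the generating function of $r$-canonical RNA secondary structures with minimum arc length $\lambda$, whose square-root singularity is classical; equivalently, a direct computation of $\Delta(x,1)$ — this is where the hypotheses $1\le\lambda\le4$ and $1\le r\le3$ enter — shows $\Delta(x,1)$ has a \emph{simple} zero at $x=\rho(1)$, i.e.\ $\partial_x\Delta(\rho(1),1)\neq0$. The implicit function theorem then supplies an analytic branch $y\mapsto\rho(y)$ near $1$ and a factorization $\Delta(x,y)=(\rho(y)-x)\,H(x,y)$ with $H$ analytic and $H(\rho(y),y)=-\partial_x\Delta(\rho(y),y)\neq0$ near $1$ (and $H>0$ on the relevant real segment, so $\sqrt{H}$ is analytic there). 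Substituting into \eqref{Eq:arcunex} gives
\[
\mathbf{D}_0(x,y)=\frac{\mathbf{B}(x,y)}{2(x^2y)^r}-\frac{\sqrt{H(x,y)}}{2(x^2y)^r}\,\bigl(\rho(y)-x\bigr)^{1/2},
\]
and expanding the two analytic prefactors at $x=\rho(y)$ yields \eqref{Eq:0asy} with $\pi(y)=\mathbf{B}(\rho(y),y)/\bigl(2(\rho(y)^2y)^r\bigr)$ and $\delta(y)=-\sqrt{H(\rho(y),y)}/\bigl(2(\rho(y)^2y)^r\bigr)$, both analytic at $1$, and $\delta(1)\neq0$ since $H(\rho(1),1)\neq0$.

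Then I would apply singularity analysis. One needs $\rho(y)$ to be the \emph{unique} singularity of $\mathbf{D}_0(\cdot,y)$ on the circle $|x|=\rho(y)$ and a $\Delta$-domain extension there; both hold at $y=1$ by the standard aperiodicity of secondary structures, and persist uniformly for $y$ near $1$ by continuity of the zero set of $\Delta$ and of the implicit branch $\rho(y)$. The uniform transfer theorem then transports \eqref{Eq:0asy} to
\[
[x^n]\mathbf{D}_0(x,y)=\frac{\delta(y)\,\rho(y)^{1/2}}{\Gamma(-\tfrac12)}\;n^{-3/2}\,\rho(y)^{-n}\bigl(1+O(n^{-1})\bigr),
\]
uniformly for $y$ in a small neighborhood of $1$, which is \eqref{Eq:arcsing} with $c(y)=-\delta(y)\rho(y)^{1/2}/(2\sqrt{\pi})$ continuous and nonzero near $1$.

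The main obstacle is not the local algebra, which is essentially dictated by the explicit formula, but the two global and uniform inputs: verifying that the zero of $\Delta(\cdot,1)$ at $\rho(1)$ is simple — which is what makes the singularity a genuine $\tfrac12$-branch point, and which is where the parameter restrictions $\lambda\le4$, $r\le3$ get used — and upgrading ``unique dominant simple branch point'' from $y=1$ to a statement uniform in a neighborhood of $1$, so that the uniform version of the transfer theorem applies and the error bound $O(n^{-1})$ in \eqref{Eq:arcsing} is genuinely uniform.
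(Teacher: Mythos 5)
The paper does not prove Theorem~\ref{T:asyexp}; it is imported verbatim from \citet{Barrett:16}, so there is no in-paper argument to compare against. Your proposal is the standard (and essentially correct) derivation from the explicit algebraic form in Theorem~\ref{T:arcgf} --- Pringsheim, simple zero of the discriminant $\mathbf{B}^2-4(x^2y)^r\mathbf{A}$ at $\rho(1)$ checked case-by-case over the finitely many $(\lambda,r)$, implicit-function continuation of $\rho(y)$, and uniform transfer --- which is exactly the route the cited reference takes.
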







\section{Some Combinatorics}\label{S:GF}

We first derive generating functions $\mathbf{D}_{g}(x,y)$ of topological structures, inflating from shapes.
\begin{theorem}\label{T:Dg}
	Suppose $g,\lambda,r \geq 1$ and $\lambda\leq r+1$. Then the generating function $\mathbf{D}_{g}(x,y)$ is given by
	\begin{equation}\label{Eq:Dg}
		\mathbf{D}_{g}(x,y)=\mathbf{D}_{0}(x,y) \mathbf{S}_{g}\Big(\frac{(x^2 y)^r \mathbf{D}_{0}(x,y)^2}{1-x^2 y-(x^2 y)^r (\mathbf{D}_{0}(x,y)^2-1)}\Big).
	\end{equation}
\end{theorem}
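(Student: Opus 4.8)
The plan is to realise every $r$-canonical topological structure of genus $g$ as a genus-$g$ shape with each arc ``inflated'' and with secondary structures grafted onto the backbone, and to extract the bivariate generating function from this description. First I would use the projection $\pi$ recalled in the text: delete all unpaired vertices, then iterate ``remove every $1$-arc, collapse every maximal stack to a single arc'' until nothing changes. Every elementary move preserves the genus, so $\pi$ maps $r$-canonical genus-$g$ structures onto genus-$g$ shapes, and it is surjective (a shape can be inflated back into such a structure, as below). Consequently $\mathbf{D}_g(x,y)=\sum_{\sigma}\mathbf{F}_\sigma(x,y)$, the sum ranging over genus-$g$ shapes $\sigma$, where $\mathbf{F}_\sigma$ is the generating function of the fibre $\pi^{-1}(\sigma)$. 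If $\mathbf{F}_\sigma$ depends on $\sigma$ only through its number $n$ of arcs, say $\mathbf{F}_\sigma=\mathbf{D}_0(x,y)\,u(x,y)^{\,n}$ for a series $u$ to be determined, then summing over $\sigma$ and recalling $\mathbf{S}_g(x)=\sum_n s_g(n)x^n$ gives $\mathbf{D}_g=\mathbf{D}_0\,\mathbf{S}_g(u)$, which is \eqref{Eq:Dg} once $u$ is identified.

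The core is the fibre. A structure $S$ with $\pi(S)=\sigma$ is reconstructed from $\sigma$ by replacing each arc $\alpha$ of $\sigma$ with a \emph{stem} --- a nested run of base pairs that collapses back to $\alpha$ under $\pi$; concretely the base pairs of the run make up one or more maximal stacks of $S$, each of length $\ge r$, separated along the run (and bordered on the inside) by grafted secondary structures, which disappear under $\pi$ --- and by grafting a secondary structure into each remaining gap of the backbone. The hypothesis $\lambda\le r+1$ is used exactly here: since a shape has no $1$-arcs, every arc $\alpha$ of $\sigma$ has a vertex strictly between its endpoints, and that vertex is an endpoint of another arc whose stem contributes at least $r$ backbone vertices strictly inside the innermost base pair of the stem of $\alpha$; hence that innermost base pair has length at least $r+1\ge\lambda$, so stems automatically satisfy the minimum-arc-length constraint and it imposes nothing on the grafted structures. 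Counting the grafting positions --- those interior to a stem, those shared by neighbouring stems, and the single outermost one, which is global --- then yields $\mathbf{F}_\sigma=\mathbf{D}_0\,u^{n}$, with $u$ the generating function of a single inflated arc together with its attached secondary structures.

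To identify $u$ I would exploit the recursive shape of a stem: an outermost base pair carrying a secondary structure on each side and enclosing either the ``interior slot'' (filled by the inflation of what is nested inside $\alpha$ in $\sigma$, and already counted by the other arcs' factors) or a shorter stem. Translating this, together with the condition that uninterrupted sub-runs of a stem have length $\ge r$, into a functional equation for $u$ and solving it writes $u$ in terms of $\mathbf{D}_0$, $\mathbf{A}$ and $\mathbf{B}$; eliminating $\mathbf{A}$ and $\mathbf{B}$ with the identity $(x^2y)^r\mathbf{D}_0^2-\mathbf{B}\,\mathbf{D}_0+\mathbf{A}=0$ of Theorem~\ref{T:arcgf} then puts $u$ into the required form
\[
u=\frac{(x^2y)^r\,\mathbf{D}_0^2}{\,1-x^2y-(x^2y)^r\,(\mathbf{D}_0^2-1)\,}.
\]

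The main obstacle I anticipate is the second step: establishing that $\mathbf{F}_\sigma$ factors cleanly as $\mathbf{D}_0\,u^{n}$. The subtle point --- and the reason the per-arc factor is this $u$ and not the naive $(x^2y)^r\mathbf{D}_0^2/(1-x^2y)$ of a bare stack flanked by two secondary structures --- is that a shape arc does \emph{not} inflate merely to a maximal stack: secondary structures placed between its base pairs vanish under $\pi$ and so must be permitted, which enlarges the building block and forces the $r$-canonicity condition to be carried along inside it. Getting the combinatorics of the grafting slots precisely right, especially the count that isolates exactly one global factor $\mathbf{D}_0$, and deriving the functional equation for $u$, is where the work lies; once that is done, the rest is routine algebra based on Theorem~\ref{T:arcgf}.
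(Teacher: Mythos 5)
Your proposal is correct and follows essentially the same route as the paper: project structures onto genus-$g$ shapes, show the fibre over a shape with $k$ arcs has generating function $\mathbf{D}_0\,u^k$ where $u$ is the stem factor (minimal stack of length $\ge r$, a sequence of induced stacks each forced apart by a nonempty secondary-structure insertion, and the two flanking $\mathbf{D}_0$ factors absorbed per arc), then sum against the shape polynomial; your justification of the hypothesis $\lambda\le r+1$ via the innermost base pair of a stem enclosing at least $r$ vertices of a nested stem is exactly the paper's argument. The only cosmetic difference is that you phrase the identification of $u$ as solving a functional equation and invoke the quadratic relation from Theorem~\ref{T:arcgf}, whereas the paper obtains $u=\mathbf{M}\cdot\mathbf{D}_0^2$ directly by the symbolic method without needing $\mathbf{A}$ or $\mathbf{B}$.
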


The proof is presented in Section~\ref{S:pf}.

{\bf Remark.}
Theorem~\ref{T:Dg} differs from the results of~\citet{reidys:2013}, which uses an inflation from seeds (linear chord diagrams
in which each stack has length one).  Their method requires additional information on $1$-arcs in seeds, which need to be
related to the generating function $\mathbf{C}_g(x)$ of linear chord diagrams. Shapes can be viewed as seeds without $1$-arcs.
More importantly, there are only finitely many shapes of fixed genus, whence we have a generating polynomial of shapes,
$\mathbf{S}_{g}(x)$, which eventually  explains the subexponential factor in the asymptotic expansion of $\mathbf{D}_{g}(x,y)$
(Theorem~\ref{T:gasy}).

Inflation from shapes to linear chord diagrams implies
\begin{corollary}[\citet{Li:14}]\label{C:CS}
	For $g \geq 1$, we have
	\begin{equation}\label{Eq:CS}
		\mathbf{C}_g(x)=  \mathbf{C}_0(x)\,
		\mathbf{S}_g\!\left(\frac{x \mathbf{C}_0(x)^2}{1-x \mathbf{C}_0(x)^2}\right).
	\end{equation}
\end{corollary}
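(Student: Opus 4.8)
\emph{Proof strategy.} The plan is to deduce eq.~(\ref{Eq:CS}) from Theorem~\ref{T:Dg} by fixing the length parameters to their trivial values and then retaining only the all-vertices-paired part. First I would set $\lambda=r=1$; this satisfies the hypothesis $\lambda\le r+1$ of Theorem~\ref{T:Dg}, and since $1-x^2y-(x^2y)(\mathbf{D}_{0,1}^{[1]}(x,y)^2-1)=1-x^2y\,\mathbf{D}_{0,1}^{[1]}(x,y)^2$, Theorem~\ref{T:Dg} specializes to
\[
\mathbf{D}_{g,1}^{[1]}(x,y)=\mathbf{D}_{0,1}^{[1]}(x,y)\,\mathbf{S}_g\!\left(\frac{x^2y\,\mathbf{D}_{0,1}^{[1]}(x,y)^2}{1-x^2y\,\mathbf{D}_{0,1}^{[1]}(x,y)^2}\right).
\]

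Next I would substitute $y\mapsto z/x^2$ and read both sides as elements of $\mathbb{Q}[[x,z]]$. This is legitimate because $d_{g,1}^{[1]}(n,l)=0$ unless $2l\le n$, so that $\mathbf{D}_{g,1}^{[1]}(x,z/x^2)=\sum_{l}z^{l}\sum_{m\ge 0}d_{g,1}^{[1]}(2l+m,l)\,x^{m}$, and likewise in genus $0$; moreover $x^2y$ becomes exactly $z$, and $z\,\mathbf{D}_{0,1}^{[1]}(x,z/x^2)^2$ has vanishing constant term (the empty diagram contributes $1$), so the geometric series $\sum_{j\ge 1}\bigl(z\,\mathbf{D}_{0,1}^{[1]}(x,z/x^2)^2\bigr)^{j}$ and its image under the polynomial $\mathbf{S}_g$ are well defined. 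Then I would extract the coefficient of $x^{0}$, i.e.~set $x=0$. A genus-$g$ diagram with $2l$ vertices and $l$ arcs and no arc-length or stack-length restriction is precisely a linear chord diagram of genus $g$ with $l$ arcs, so $\mathbf{D}_{g,1}^{[1]}(x,z/x^2)\big|_{x=0}=\sum_{l}d_{g,1}^{[1]}(2l,l)z^{l}=\sum_{l}c_g(l)z^{l}=\mathbf{C}_g(z)$; the same identification in genus $0$ (noncrossing perfect matchings are Catalan-enumerated) gives $\mathbf{D}_{0,1}^{[1]}(x,z/x^2)\big|_{x=0}=\mathbf{C}_0(z)$. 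Setting $x=0$ in the displayed identity and renaming $z$ as $x$ then yields eq.~(\ref{Eq:CS}).

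I do not expect a deep obstacle here: the care lies entirely in the bookkeeping of the second step — justifying that $y\mapsto z/x^2$ followed by $x=0$ is a valid manipulation of formal power series (which rests on $d_{g}(n,l)=0$ for $2l>n$) and nailing down the combinatorial identifications $d_{g,1}^{[1]}(2l,l)=c_g(l)$ and $d_{0,1}^{[1]}(2l,l)=c_0(l)$. Conceptually, the corollary is the instance of the shape-inflation underlying Theorem~\ref{T:Dg} in which the ambient secondary-structure factor $\mathbf{D}_0$ is replaced by the noncrossing-perfect-matching factor $\mathbf{C}_0$ and the weight $x^2y$ of a single arc by $x$; a reader wanting a self-contained argument can instead re-run that inflation directly for linear chord diagrams, where — exactly as in the proof of Theorem~\ref{T:Dg} — the one delicate point is identifying the slots into which noncrossing perfect matchings may be inserted so that a single shape-arc contributes $x\,\mathbf{C}_0(x)^2/(1-x\,\mathbf{C}_0(x)^2)$. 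As a consistency check, for $g=1$ one has $\mathbf{S}_1(x)=x^2(1+x)^3$ and $\mathbf{C}_0(x)=2/(1+\sqrt{1-4x})$, and the formula returns $\mathbf{C}_1(x)=x^2(1-4x)^{-5/2}$, in agreement with Theorem~\ref{T:c}.
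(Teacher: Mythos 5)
Your argument is correct, and its main route differs from the paper's. The paper treats eq.~(\ref{Eq:CS}) as following from re-running the inflation of Theorem~\ref{T:Dg} directly on linear chord diagrams (arcs weighted $x$, the filler class being noncrossing perfect matchings with generating function $\mathbf{C}_0$, so that a stem contributes $\tfrac{x}{1-x\mathbf{C}_0(x)^2}$ and the $2k+1$ gaps contribute $\mathbf{C}_0(x)^{2k+1}$) --- precisely the ``self-contained'' alternative you sketch at the end. Your primary derivation instead specializes the already-proved identity of Theorem~\ref{T:Dg} at $\lambda=r=1$ via $y\mapsto z/x^2$ followed by $x=0$, and the bookkeeping you flag is exactly what makes this legitimate: the support condition $d_g(n,l)=0$ for $2l>n$ puts $\mathbf{D}_{g,1}^{[1]}(x,z/x^2)$ in $\mathbb{Q}[[x,z]]$, the factor $z$ keeps the geometric series defining the argument of $\mathbf{S}_g$ convergent in the $(x,z)$-adic topology, and evaluation at $x=0$ is then a continuous ring homomorphism picking out the perfect matchings, with $d_{g,1}^{[1]}(2l,l)=c_g(l)$ and $d_{0,1}^{[1]}(2l,l)=c_0(l)$ since $\lambda=r=1$ imposes no restriction. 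What each approach buys: yours makes the statement a genuine corollary of Theorem~\ref{T:Dg} at the cost of the formal-series justification; the paper's (implicit) route avoids that but duplicates the combinatorial construction. Your $g=1$ consistency check is also correct: with $s=\sqrt{1-4x}$ one has $2-\mathbf{C}_0=s\,\mathbf{C}_0$ and $\mathbf{C}_0-1=x\mathbf{C}_0^2$, whence $\mathbf{C}_0\,\mathbf{S}_1\!\left(\tfrac{x\mathbf{C}_0^2}{1-x\mathbf{C}_0^2}\right)=x^2 s^{-5}=\mathbf{C}_1(x)$.
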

Now we can derive the functional relation between $\mathbf{D}_{g}(x,y)$ and $\mathbf{C}_{g}(x)$, which generalizes the corresponding results of ~\citet{Barrett:16,reidys:2013}.
\begin{corollary}
	For $g,\lambda,r \geq 1$ and $\lambda\leq r+1$, we have
	\begin{equation}\label{Eq:DS}
		\mathbf{D}_g (x,y)= \frac{\mathbf{A}(x,y)}{\mathbf{B} (x,y)}\, {\bf
			C}_g\!\left(\frac{(x^2 y)^r \mathbf{A}(x,y)}{\mathbf{B} (x,y)^2}
		\right),
	\end{equation}
	where polynomials $\mathbf{A}(x,y)$ and $\mathbf{B} (x,y)$ are defined in Theorem~\ref{T:arcgf}.
\end{corollary}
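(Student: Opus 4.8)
The plan is to combine the three ingredients already available: the closed form for $\mathbf{D}_0(x,y)$ in Theorem~\ref{T:arcgf}, the shape inflation formula of Theorem~\ref{T:Dg}, and the composition identity $\mathbf{C}_g=\mathbf{C}_0\,\mathbf{S}_g(\,\cdot\,)$ of Corollary~\ref{C:CS}. Abbreviate $u=x^2y$ and set
\[
z \;=\; z(x,y)\;=\;\frac{u^{\,r}\,\mathbf{A}(x,y)}{\mathbf{B}(x,y)^2}.
\]
Since $\mathbf{A}(0,0)=\mathbf{B}(0,0)=1$ and $z$ carries the factor $u^{\,r}$, the series $z$ has vanishing constant term, so $\mathbf{C}_0(z)$, $\mathbf{C}_g(z)$ and the nested substitution $\mathbf{S}_g\bigl(z\mathbf{C}_0(z)^2/(1-z\mathbf{C}_0(z)^2)\bigr)$ are all well-defined formal power series. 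By the second displayed identity in~(\ref{Eq:arcunex}) we have $\mathbf{D}_0(x,y)=\tfrac{\mathbf{A}}{\mathbf{B}}\mathbf{C}_0(z)$, and hence $u^{\,r}\mathbf{D}_0^2=z\,\mathbf{A}\,\mathbf{C}_0(z)^2$.

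The only real computation is to rewrite the argument of $\mathbf{S}_g$ in~(\ref{Eq:Dg}). Using the explicit form $\mathbf{A}=1-u+u^{\,r}$ one checks that
\[
1-u-u^{\,r}\bigl(\mathbf{D}_0^2-1\bigr)\;=\;(1-u+u^{\,r})-u^{\,r}\mathbf{D}_0^2\;=\;\mathbf{A}-z\,\mathbf{A}\,\mathbf{C}_0(z)^2\;=\;\mathbf{A}\bigl(1-z\,\mathbf{C}_0(z)^2\bigr),
\]
so the argument collapses to
\[
\frac{u^{\,r}\mathbf{D}_0^2}{\,1-u-u^{\,r}(\mathbf{D}_0^2-1)\,}\;=\;\frac{z\,\mathbf{A}\,\mathbf{C}_0(z)^2}{\mathbf{A}\bigl(1-z\,\mathbf{C}_0(z)^2\bigr)}\;=\;\frac{z\,\mathbf{C}_0(z)^2}{1-z\,\mathbf{C}_0(z)^2},
\]
which is exactly the argument of $\mathbf{S}_g$ in Corollary~\ref{C:CS} with $x$ replaced by $z$. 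Substituting back into~(\ref{Eq:Dg}) and then invoking Corollary~\ref{C:CS},
\[
\mathbf{D}_g(x,y)\;=\;\frac{\mathbf{A}}{\mathbf{B}}\,\mathbf{C}_0(z)\,\mathbf{S}_g\!\Bigl(\frac{z\,\mathbf{C}_0(z)^2}{1-z\,\mathbf{C}_0(z)^2}\Bigr)\;=\;\frac{\mathbf{A}}{\mathbf{B}}\,\mathbf{C}_g(z),
\]
and unwinding the definition of $z$ yields the asserted formula.

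I do not anticipate a genuine obstacle: the whole argument is a formal substitution. The hypotheses $g,\lambda,r\ge 1$ and $\lambda\le r+1$ are needed only to license the use of Theorem~\ref{T:Dg}; Theorem~\ref{T:arcgf} and Corollary~\ref{C:CS} require none of them. The two points that deserve a sentence of care are the bookkeeping that $z$ has zero constant term, so that every composition of power series above is legitimate, and the elementary identity $1-u-u^{\,r}(\mathbf{D}_0^2-1)=\mathbf{A}\bigl(1-z\mathbf{C}_0(z)^2\bigr)$, which is precisely where the explicit shape $\mathbf{A}=1-x^2y+(x^2y)^r$ is exploited.
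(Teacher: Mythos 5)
Your derivation is correct and is exactly the route the paper intends: the corollary is stated immediately after Corollary~\ref{C:CS} with no written proof, the implicit argument being precisely the substitution $\mathbf{D}_0=\tfrac{\mathbf{A}}{\mathbf{B}}\mathbf{C}_0(z)$ into Theorem~\ref{T:Dg} followed by the simplification $1-x^2y-(x^2y)^r(\mathbf{D}_0^2-1)=\mathbf{A}\bigl(1-z\,\mathbf{C}_0(z)^2\bigr)$, which also reappears as eq.~(\ref{Eq:dh}) in the proof of Theorem~\ref{T:gasy}. Your added care about the vanishing constant term of $z$ and the role of the hypotheses is a small but correct refinement of what the paper leaves unstated.
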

By interpreting the indeterminant $y$ as a parameter, we consider $\mathbf{D}_g (x,y)$ as a univariate power series and obtain its singular expansion.
\begin{theorem}\label{T:gasy}
	Suppose  $g,\lambda,r \geq 1$ and $\lambda\leq r+1$. Then $\mathbf{D}_g (x,y)$ has the singular expansion
	\begin{equation}\label{Eq:gasy}
		\mathbf{D}_g (x,y)= \delta_g(y)\, \left(\rho(y)-x \right)^{-\frac{6g-1}{2}} \left(1+ o(1) \right),
	\end{equation}
	as $x\rightarrow \rho(y)$, uniformly for  $y$ restricted to a neighborhood of $1$, where $\delta_g(y)$ is analytic at 1 such that $\delta_g(1) \neq 0$,  and $\rho (y)$ is the minimal positive,
	real solution of
	\[
	\mathbf{B} (x,y)^2 -4 (x^2 y)^r \mathbf{A}(x,y)=0,
	\]
	for $y$ in a neighborhood of $1$. In addition the coefficients of $\mathbf{D}_g (x,y)$ are asymptotically given by
	\begin{equation} \label{Eq:coeffg}
		[x^{n} ] \mathbf{D}_g (x,y) =c_g (y) n^{\frac{6g-3}{2} } \big(\rho (y)
		\big)^{-n}\ \big(1+O(n^{-1})\big),
	\end{equation}
	as $n \rightarrow \infty$, uniformly, for $y$ restricted to a small neighborhood of $1$, where
	$c_g (y)$ is continuous and nonzero near $1$.
\end{theorem}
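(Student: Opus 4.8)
The plan is to reduce the statement to the explicit, essentially rational form of $\mathbf{C}_g$ and then feed the resulting singular expansion into a transfer theorem uniform in the auxiliary variable $y$. Write $w(x,y)=\dfrac{(x^2 y)^r\mathbf{A}(x,y)}{\mathbf{B}(x,y)^2}$, so that, by~\eqref{Eq:DS} (equivalently, by Theorem~\ref{T:Dg} after using the quadratic~\eqref{Eq:arcfe} to simplify the argument of $\mathbf{S}_g$), one has $\mathbf{D}_g(x,y)=\frac{\mathbf{A}(x,y)}{\mathbf{B}(x,y)}\,\mathbf{C}_g\bigl(w(x,y)\bigr)$. By Theorem~\ref{T:c}, $\mathbf{C}_g(w)=\sum_{n=2g}^{3g-1}\kappa_g(n)\,w^n(1-4w)^{-(n+\frac12)}$, a function analytic on $\mathbb{C}\setminus[\tfrac14,\infty)$ whose behaviour as $w\to\tfrac14$ is governed by the term of largest exponent, $n=3g-1$, namely $\mathbf{C}_g(w)\sim\kappa_g(3g-1)\,4^{-(3g-1)}(1-4w)^{-\frac{6g-1}{2}}$, with $\kappa_g(3g-1)>0$ by~\eqref{E:cc}. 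Thus the whole task is to transport the singularity of $\mathbf{C}_g$ at $w=\tfrac14$ through the map $w(x,y)$.

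First I would locate and control this singularity. The polynomials $\mathbf{A},\mathbf{B}$ are entire, and $\mathbf{B}(\rho(y),y)\neq0$, $\mathbf{A}(\rho(y),y)\neq0$ for $y$ near $1$ (otherwise $\mathbf{D}_0=\frac{\mathbf{A}}{\mathbf{B}}\mathbf{C}_0(w)$ would have a pole rather than the square-root singularity guaranteed by Theorem~\ref{T:asyexp}), so near $\rho(y)$ the only singularity of $x\mapsto\mathbf{D}_g(x,y)$ arises from $w(x,y)=\tfrac14$, which by definition of $\rho(y)$ is exactly $P(x,y):=\mathbf{B}(x,y)^2-4(x^2 y)^r\mathbf{A}(x,y)=0$. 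Moreover $\rho(y)$ is a \emph{simple} zero of $P(\cdot,y)$: if not, $1-4w=P/\mathbf{B}^2$ would vanish to order $\geq2$ and $\mathbf{D}_0$ would be analytic at $\rho(y)$, again contradicting Theorem~\ref{T:asyexp}. Hence $P(x,y)=\tilde c(y)\,(\rho(y)-x)\,(1+o(1))$ as $x\to\rho(y)$, where $\tilde c(y)=-\partial_x P(\rho(y),y)>0$ is analytic and nonzero at $1$, $\rho(y)$ is analytic at $1$ by the implicit function theorem, and therefore
\[
1-4w(x,y)=\frac{\tilde c(y)}{\mathbf{B}(\rho(y),y)^2}\,(\rho(y)-x)\,(1+o(1)),
\]
uniformly for $y$ in a neighbourhood of $1$.

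Plugging this into the dominant term of $\mathbf{C}_g$ and using $\frac{\mathbf{A}(x,y)}{\mathbf{B}(x,y)}\to\frac{\mathbf{A}(\rho(y),y)}{\mathbf{B}(\rho(y),y)}$ yields the claimed expansion~\eqref{Eq:gasy}, with
\[
\delta_g(y)=\frac{\mathbf{A}(\rho(y),y)}{\mathbf{B}(\rho(y),y)}\;\kappa_g(3g-1)\,4^{-(3g-1)}\left(\frac{\mathbf{B}(\rho(y),y)^2}{\tilde c(y)}\right)^{\!\frac{6g-1}{2}},
\]
a product of functions analytic and nonvanishing at $y=1$, so $\delta_g$ is analytic at $1$ and $\delta_g(1)\neq0$; the subdominant terms of $\mathbf{C}_g$ together with the regular parts of $w$ and $\mathbf{A}/\mathbf{B}$ only perturb this by relative order $\rho(y)-x$. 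This step — checking that $w$ reaches its singular value $\tfrac14$ with $1-4w$ vanishing to exact order one in $\rho(y)-x$, and that the top coefficient $\kappa_g(3g-1)$ of $\mathbf{C}_g$ (equivalently, that $\mathbf{S}_g$ has degree $6g-1$, Theorem~\ref{T:s}) does not vanish — is the crux of the argument: it is precisely what converts the polynomial degree into the singular exponent $-\tfrac{6g-1}{2}$, and it rests entirely on the square-root nature of $\mathbf{D}_0$ at $\rho(y)$ provided by Theorem~\ref{T:asyexp}. Everything else is routine.

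Finally I would invoke the Flajolet--Odlyzko transfer theorem in its form uniform in a parameter. Analyticity of $x\mapsto\mathbf{D}_g(x,y)$ in a $\Delta$-domain at $\rho(y)$ follows from that of $\mathbf{D}_0$ (Theorem~\ref{T:asyexp}), the rationality of $w(x,y)$ in $x$, and the analyticity of $\mathbf{C}_g$ off $[\tfrac14,\infty)$; and $\rho(y)$ is the unique dominant singularity because it already is for $\mathbf{D}_0$, with which $\mathbf{D}_g$ shares its singular set. Since $\tfrac{6g-1}{2}\notin\mathbb{Z}_{\leq0}$, transfer then gives
\[
[x^{n}]\mathbf{D}_g(x,y)=\frac{\delta_g(y)\,\rho(y)^{-\frac{6g-1}{2}}}{\Gamma\!\bigl(\tfrac{6g-1}{2}\bigr)}\;n^{\frac{6g-3}{2}}\,\rho(y)^{-n}\bigl(1+O(n^{-1})\bigr),
\]
uniformly for $y$ in a small neighbourhood of $1$, which is~\eqref{Eq:coeffg} with $c_g(y)=\delta_g(y)\,\rho(y)^{-(6g-1)/2}/\Gamma(\tfrac{6g-1}{2})$ continuous and nonzero near $1$. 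The $O(n^{-1})$ error is justified by carrying the expansions of $\mathbf{C}_g$ and of $w$ one order further; all functions involved are algebraic, so a complete expansion in powers of $n^{-1}$ is in fact available.
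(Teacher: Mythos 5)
Your proposal is correct, and it reaches the conclusion by a genuinely different route from the paper. The paper works directly from Theorem~\ref{T:Dg}, writing $\mathbf{D}_g(x,y)=\mathbf{D}_0(x,y)\,\mathbf{S}_g\bigl(h(x,y,\mathbf{D}_0(x,y))\bigr)$ and proving, via the algebraic identity $1-x^2y-(x^2y)^r(\mathbf{D}_0^2-1)=\sqrt{\mathbf{B}^2-4(x^2y)^r\mathbf{A}}\;\mathbf{D}_0$, that the inner function $h$ has a $\bigl(\rho(y)-x\bigr)^{-1/2}$ singularity (the ``critical composition'' paradigm); the exponent $-\tfrac{6g-1}{2}$ then comes from the degree $6g-1$ of the shape polynomial $\mathbf{S}_g$, with leading coefficient $\kappa_g(3g-1)$. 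You instead start from the closed form $\mathbf{D}_g=\tfrac{\mathbf{A}}{\mathbf{B}}\,\mathbf{C}_g(w)$ of eq.~(\ref{Eq:DS}) and the Harer--Zagier expression of Theorem~\ref{T:c}, so that the whole analysis reduces to showing that $1-4w=P/\mathbf{B}^2$ vanishes to exact order one at $\rho(y)$; the exponent then falls out of the top term $n=3g-1$ of $\mathbf{C}_g$. The two arguments hinge on the same two facts --- $\kappa_g(3g-1)\neq0$ and the square-root singularity of $\mathbf{D}_0$ from Theorem~\ref{T:asyexp} --- and your route buys a fully explicit $\delta_g(y)$ and, since every term is algebraic, a complete expansion in powers of $n^{-1}$, whereas the paper's route stays closer to the combinatorial inflation and handles uniformity by an explicit Hankel-contour estimate of the error term rather than by appealing to a parametrized transfer theorem. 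Two small points to tighten: in your simple-zero argument, a zero of $P(\cdot,y)$ of odd order $\geq 3$ would give a $(\rho(y)-x)^{3/2}$ singularity rather than analyticity, but this still contradicts $\delta(1)\neq0$ in Theorem~\ref{T:asyexp}, so the conclusion stands; and both your claim that $\rho(y)$ is the unique dominant singularity of $\mathbf{D}_g(\cdot,y)$ and the paper's rely on $\mathbf{B}(\cdot,y)$ having no zeros in the closed disk of radius $\rho(y)$, which deserves a one-line verification.
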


{\bf Remark.}
The dominant singularity of $\mathbf{D}_g (x,y)$ is the same as that of $\mathbf{D}_0 (x,y)$, i.e.~it is independent of genus $g$
and only depends on $r$ and $\lambda$. Furthermore, $\mathbf{D}_g (x,y)$  is the composition of three functions, a polynomial, a
rational function and $\mathbf{D}_0 (x,y)$.
The composition of $\mathbf{D}_0 (x,y)$ with the rational function produces the critical case of singularity analysis~\citep{Flajolet:07a}.
This yields a singular expansion having an exponent $-\frac{1}{2}$ (as opposed to $\frac{1}{2}$ in the case of $\mathbf{D}_0 (x,y)$).
Since the outer function is a polynomial of degree $6g-1$, the singular expansion of $\mathbf{D}_g (x,y)$ has the exponent
$-\frac{6g-1}{2}$, resulting in the subexponential factor $n^{\frac{6g-3}{2} }$.


\section{The Central Limit Theorem}\label{S:PT}


For fixed $\lambda$ and $r$, we  analyze the random variable $\mathbb{Y}_{g,n}$, counting the numbers of arcs in RNA  structures of genus $g$. By construction we have
\[
\mathbb{P}(\mathbb{Y}_{g,n}=l)=\frac{d_{g}(n,l)}{d_{g}(n)},
\]
where $l=2g r,2g r+1,\ldots, \lfloor\frac{n}{2} \rfloor $.
\begin{theorem}\label{T:arcclt}
	For any  $g,\lambda,r \geq 1$ and $\lambda\leq r+1$,	there exists a pair $(\mu,\sigma)$ such that the normalized random variable
	\begin{equation*}
		\mathbb{Y}^{*}_{g,n}=\frac{\mathbb{Y}_{g,n}- \mu \, n}{\sqrt{n\,
				\sigma^2 }},
	\end{equation*}
	converges in distribution to a Gaussian variable with a speed of convergence $O(n^{-\frac{1}{2}})$. That
	is, we have
	\begin{equation}\label{Eq:sclt}
		\lim_{n\to\infty}\mathbb{P}\left(\frac{\mathbb{Y}_{g,n}- \mu
			n}{\sqrt{n\, \sigma^2}} < x \right)  =
		\frac{1}{\sqrt{2\pi}}\int_{-\infty}^{x}\,e^{-\frac{1}{2}t^2} \mathrm{d}t \ ,
	\end{equation}
	where $\mu$ and $\sigma$ are given by
	\begin{equation}\label{Eq:cltpara}
		\mu= -\frac{\theta'(0)}{\theta(0)},
		\qquad \qquad \sigma^2=
		\left(\frac{\theta'(0)}{\theta(0)}
		\right)^2-\frac{\theta''(0)}{\theta(0)},
	\end{equation}
	and $\theta(s) = \rho(e^s)$.
\end{theorem}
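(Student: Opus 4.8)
The plan is to apply the quasi-powers theorem (\citet{Flajolet:07a}) to the probability generating functions of $\mathbb{Y}_{g,n}$. Write
\[
p_n(y)=\mathbb{E}\!\left(y^{\mathbb{Y}_{g,n}}\right)=\frac{[x^n]\mathbf{D}_g(x,y)}{[x^n]\mathbf{D}_g(x,1)},
\]
which is well defined since $[x^n]\mathbf{D}_g(x,1)=d_g(n)>0$ for $n$ large. The whole argument rests on the \emph{uniform} coefficient estimate \eqref{Eq:coeffg} of Theorem~\ref{T:gasy}: for $y$ in a fixed neighbourhood of $1$,
\[
[x^n]\mathbf{D}_g(x,y)=c_g(y)\,n^{\frac{6g-3}{2}}\,\rho(y)^{-n}\bigl(1+O(n^{-1})\bigr),
\]
with $c_g$ continuous and nonvanishing at $1$ and $\rho(y)$ the minimal positive root of $\mathbf{B}(x,y)^2-4(x^2y)^r\mathbf{A}(x,y)=0$.

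Dividing the estimate for general $y$ by the one for $y=1$, the subexponential factor $n^{(6g-3)/2}$ cancels and one gets, uniformly for $y$ near $1$,
\[
p_n(y)=\frac{c_g(y)}{c_g(1)}\left(\frac{\rho(1)}{\rho(y)}\right)^{\!n}\bigl(1+O(n^{-1})\bigr).
\]
Putting $y=e^s$, this is exactly the quasi-powers form $p_n(e^s)=\exp\bigl(U(s)\,n+V(s)\bigr)\bigl(1+O(n^{-1})\bigr)$ with
\[
U(s)=\log\rho(1)-\log\rho(e^s)=\log\theta(0)-\log\theta(s),\qquad V(s)=\log\frac{c_g(e^s)}{c_g(1)},\qquad \theta(s)=\rho(e^s),
\]
so that $U(0)=V(0)=0$. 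To invoke the theorem one checks that $U,V$ are analytic in a complex neighbourhood of $s=0$: this holds because $\rho(y)$ is analytic and positive near $y=1$ — by construction $\rho(1)$ is a \emph{simple} root of the discriminant polynomial (the same fact that produces the genuine square-root singularity of $\mathbf{D}_0$ in Theorem~\ref{T:asyexp}), so the implicit function theorem applies, and $c_g$ inherits analyticity from the singular expansion \eqref{Eq:gasy}. Hwang's theorem then yields a Gaussian limit law for $\mathbb{Y}_{g,n}$ with mean $U'(0)\,n+O(1)$ and variance $U''(0)\,n+O(1)$, together with the Berry--Esseen bound of order $n^{-1}+n^{-1/2}=O(n^{-1/2})$. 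A short computation gives $U'(0)=-\theta'(0)/\theta(0)=\mu$ and $U''(0)=(\theta'(0)/\theta(0))^2-\theta''(0)/\theta(0)=\sigma^2$, matching \eqref{Eq:cltpara}.

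The only point requiring real work — and the main obstacle — is the variability condition $\sigma^2>0$, i.e.\ $U''(0)\neq0$. I would handle this by observing that $\rho(y)$ depends only on $\mathbf{A},\mathbf{B}$, hence only on $r$ and $\lambda$ and \emph{not} on $g$; consequently $\mu$ and $\sigma^2$ coincide with the corresponding constants for RNA secondary structures ($g=0$), for which the central limit theorem and $\sigma^2>0$ are already established in \citet{Barrett:16} (and classically, for minimum arc-length two, in \citet{Hofacker:98}). If one prefers a self-contained verification, $\theta'(0)$ and $\theta''(0)$ can be obtained by implicit differentiation of $\mathbf{B}(\rho(y),y)^2-4(\rho(y)^2y)^r\mathbf{A}(\rho(y),y)=0$ at $y=1$, and the resulting rational expression in $\rho(1)$ is strictly positive for every admissible pair $1\le\lambda\le r+1$. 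Either route closes the argument and, in addition, makes transparent the paper's empirical observation that the arc distribution is asymptotically insensitive to the genus.
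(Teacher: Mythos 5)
Your proposal is correct and follows essentially the same route as the paper, which deduces the theorem in one line from the uniform coefficient asymptotics of Theorem~\ref{T:gasy} via Bender's central limit theorem; your substitution of Hwang's quasi-powers theorem for Bender's is an interchangeable choice of the standard meta-theorem, since both take exactly the same input, namely eq.~(\ref{Eq:coeffg}) with the genus-independent singularity $\rho(y)$. Your explicit verification of the variability condition $\sigma^2>0$ by reduction to the genus-zero case of \citet{Barrett:16} is a point the paper leaves implicit and is a worthwhile addition.
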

Theorem~\ref{T:arcclt} follows directly from Theorem~\ref{T:gasy} and Bender's Theorem in Supplementary Material, setting
$f(x,e^s)=\mathbf{D}_g(x,e^s)$.

In Table~\ref{Tab:clt}, we list the values of $\mu$  for $g\geq 1$, $1\leq \lambda \leq 6$, $1\leq r \leq 6$ and $\lambda\leq r+1$.

{\bf Remark.}  The condition $\lambda\leq r+1$ stems from the same restriction as in Theorem~\ref{T:Dg}. The results of Theorem~\ref{T:Dg} and Theorem~\ref{T:arcclt} can be generalized to the case $\lambda= r+2$ by distinguishing $2$-arcs in shapes, for details see~\citet{Reidys:10} for $k$-noncrossing structures (and also~\citet{Li:11} for RNA-RNA interaction structures). Furthermore, we can establish local limit theorems for the number of arcs in topological RNA structures, similar to~\citet{Jin-Reidys}.

{\bf Remark.} Expectation and variance of the number of arcs in topological RNA structures are independent of genus $g$.

{\bf Remark.} Table~\ref{Tab:clt} shows that  the expectation $\mu$ increases significantly from $r=1$ to $r=2$, indicating that canonical topological structures contain on average more arcs than arbitrary topological structures. The same property on the average number of arcs holds for secondary structures. For  $k$-noncrossing structures, this is not the case, in fact canonical structures contain less arcs~\citep{Reidys:11}.

We find that,
in $r$-canonical RNA structures of genus $g$ with arc-length $\geq \lambda$, the expected number of arcs increases as minimum stack-size $r$ increases or as minimum arc-length $\lambda$ decreases.
\begin{table}[htbp]
	\caption{The central limit theorem for the number of arcs in topological RNA sturctures of genus $g\geq 1$.
		We list the values of $\mu$  derived from eq.~(\ref{Eq:cltpara}). } \label{Tab:clt}
	\begin{center}
		\begin{tabular}{ccccccc}
			\hline
			&{$r=1$} &{$r=2$}
			&{$r=3$} &{$r=4$} &{$r=5$} &{$r=6$} \\
			\hline
			$\lambda=1$ & $0.3333$  & $0.3484$  & $0.3582$  & $0.3651$ & $0.3704$ & $0.3746$ \\
			$\lambda=2$ & $0.2764$  & $0.3172$  & $0.3364$  & $0.3482$ & $0.3565$ & $0.3627$ \\
			$\lambda=3$ &  &   $0.2983$  & $0.3215$  & $0.3358$ & $0.3459$& $0.3534$\\
			$\lambda=4$ &  &  &   $0.3113$ & $0.3268$ & $0.3378$ & $0.3460$ \\
			$\lambda=5$ &  &  &    & $0.3203$ & $0.3316$ & $0.3403$ \\
			$\lambda=6$ &  &  &    &  & $0.3271$ & $0.3359$ \\
			\hline
		\end{tabular}
	\end{center}
\end{table}

\section{Loops in topological RNA structures}\label{S:other}

%

In this section, we apply Theorem~\ref{T:arcclt} and show that all standard loops in RNA structures are asymptotically normal with mean and variance
linear in $n$, independent of genus. We shall study, see Fig.~\ref{F:sloop}:
\begin{itemize}
	\item the number $l^{stack}$ of stacks, i.e.~a maximal sequence of ''parallel'' arcs,
	\item the number $l^{hairpin}$ of hairpin loops. A \emph{hairpin loop} is a pair of the form  $((i,j),[i + 1,j-1])$, where $(i,j)$ is an arc and $[i+1, j-1]$ is an interval, i.e., a sequence of consecutive, unpaired vertices $ i + 1,\ldots,j-1$,
	\item the number $l^{bulge}$ of bulges. A \emph{bulge loop} is either a triple of the form $((i_1,j_1),[i_1 +1, i_2-1],(i_2,j_1-1))$ or $((i_1,j_1),(i_1 +1,j_2),[j_2 +1,j_1-1])$,
	\item the number $l^{interior}$ of interior loops. An \emph{interior loop} is a quadruple $((i_1,j_1),[i_1+1,i_2-1],(i_2,j_2),[j_2+1,j_1-1])$, where $[i_1+1,i_2-1]$ and $[j_2+1,j_1-1]$ are two non-empty intervals, and $(i_2, j_2)$ is nested in $(i_1, j_1)$, i.e., $i_1 < i_2 < j_2 < j_1$,
	\item the number $l^{multi}$ of multi-loops. A \emph{multi-loop} is a sequence $((i_1,j_1),[i_1+1,i_2-1],(i_2,j_2),[j_2+1,i_3-1],(i_3,j_3),[j_3+1,i_4-1],\ldots,(i_k,j_k),[j_k+1,j_1-1])$, for any $k\geq 3 $ and $i_1 < i_2 < j_2< i_3 < j_3\cdots< i_k < j_k < j_1$.
\end{itemize}
\begin{figure}
	\centering
	\includegraphics[width=1\textwidth]{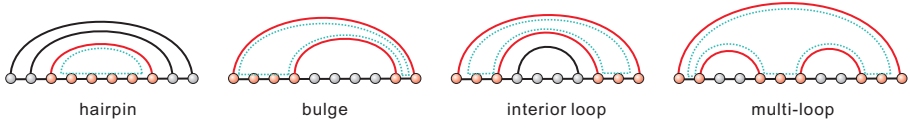}
	\caption
	{\small Hairpin, bulge, interior loop and multi-loop.
	}
	\label{F:sloop}
\end{figure}

For fixed minimum stack-length  $r$ and minimum arc-length $\lambda$, let $d_{g}^{i}(n,l)$  denote the
number of restricted topological RNA structures of $n$ nucleotides and genus $g$, in which 
$i$ marks the loop type.
Let $\mathbf{D}_{g}^{i} (x,y)=\sum_{n,l} d_{g}^{i}(n,l)x^n y^l$  denote the corresponding bivariate generating function.

For secondary structures, the generating functions $\mathbf{D}_{0}^{i} (x,y)$, its asymptotic expansion and the
corresponding central limit theorems  have been studied in~\citet{Hofacker:98,Reidys:11}.
In the following we generalize this to topological structures of genus $g$.

\begin{theorem}\label{T:Dall}
	Suppose $g,\lambda,r \geq 1$ and $\lambda\leq r+1$. Then the generating functions $\mathbf{D}_{g}^{i}(x,y)$ are given by
	\begin{equation*}
		\mathbf{D}_{g}^{i}(x,y)=\mathbf{D}_{0}^{i}(x,y) \mathbf{S}_{g}\Big(h^i(x,y,\mathbf{D}_{0}^{i}(x,y))\Big),
	\end{equation*}
	where $h^i(x,y,z)$ are rational function in $x$, $y$ and $z$ and given in Table~\ref{Tab:h}.
\end{theorem}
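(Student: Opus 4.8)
The plan is to mimic the structure of the proof of Theorem~\ref{T:Dg}, replacing the single weight $x^2 y$ per arc by a loop-type–sensitive bookkeeping, and to show that in each case the ``inflation'' of a shape to a structure still factors as a rational substitution into $\mathbf{S}_g$. First I would recall the combinatorial content of Theorem~\ref{T:Dg}: a topological structure of genus $g\geq 1$ is obtained from a shape of genus $g$ by (i) replacing each shape-vertex by a stack of length $\geq r$ whose arcs are decorated, and (ii) inflating the gaps between consecutive shape-vertices (and the two exterior gaps) by arbitrary genus-zero structures, i.e.~by $\mathbf{D}_0$. The generating function for a single arc of a shape, after inflation, is the rational function $R(x,y,z):=\frac{(x^2y)^r z^2}{1-x^2y-(x^2y)^r(z^2-1)}$ evaluated at $z=\mathbf{D}_0(x,y)$, and the exterior contributes one extra factor of $\mathbf{D}_0(x,y)$; summing over shapes with $n$ arcs, each carrying $n+1$ ``gaps'', reproduces eq.~(\ref{Eq:Dg}) via $\mathbf{S}_g(x)=\sum_n\kappa_g(n)x^n(1+x)^{n+1}$.

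The key step is that for each loop type $i\in\{stack,hairpin,bulge,interior,multi\}$ the variable $y$ is attached only to features that live \emph{inside} the inflated genus-zero pieces or to the stacks created during inflation; it is never attached to a feature straddling the crossing structure of the shadow. Concretely: the genus-zero inflations of the gaps are governed by the secondary-structure generating function $\mathbf{D}_0^i(x,y)$ of~\citet{Hofacker:98,Reidys:11}, in which $y$ already marks loop type $i$ within a noncrossing structure; and the stacks introduced at shape-vertices, together with the ``glue'' that merges an inflated stack with the adjacent genus-zero pieces, contribute loops of type $i$ in a way that depends only on $x$, $y$ and $\mathbf{D}_0^i(x,y)$ itself — hence the rational function $h^i(x,y,z)$. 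So I would argue loop-type by loop-type that the per-arc contribution of the inflated shape is exactly $h^i\big(x,y,\mathbf{D}_0^i(x,y)\big)$, and that the leftover exterior factor is $\mathbf{D}_0^i(x,y)$, giving $\mathbf{D}_g^i(x,y)=\mathbf{D}_0^i(x,y)\,\mathbf{S}_g\big(h^i(x,y,\mathbf{D}_0^i(x,y))\big)$ after summing over shapes and gaps as in Theorem~\ref{T:Dg}. Each $h^i$ is then read off by carefully accounting, at a single shape-arc, for: the stack of arcs of length $\geq r$ at each of its two endpoints (whether a new stack or a continuation), the intervals of unpaired vertices adjacent to those stacks, and the genus-zero substructures hanging in between — precisely the data distinguishing hairpins, bulges, interior loops, and multi-loops — and recording whether the newly formed loop straddling the shadow should be counted as a multi-loop (which, combinatorially, it always is once $g\geq1$, since a crossing vertex of the shadow forces at least a $3$-valent loop). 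This last observation is what makes $h^i$ depend on $\mathbf{D}_0^i$ rather than $\mathbf{D}_0$ plus correction terms.

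The main obstacle I expect is the bookkeeping at the interface between an inflated shape-vertex and the two genus-zero structures flanking it: when the flanking genus-zero structure is empty (no unpaired bases, no nested arc), the stack of the shape-arc may need to be \emph{merged} with a stack of the adjacent genus-zero piece, and the loop that closes around the shape-vertex changes type (e.g.~what looked like a hairpin in $\mathbf{D}_0^i$ is absorbed and becomes part of a multi-loop). Getting the inclusion–exclusion right here — so that no loop is double-counted and none is omitted — is the delicate part; it is exactly the kind of case analysis that the explicit formulas for $h^i$ in Table~\ref{Tab:h} encode. I would therefore organize the proof so that the generic (nonempty flanking structure) case is handled uniformly and the degenerate (empty flanking structure) cases are dispatched by a short, type-specific argument, deferring the detailed verification of each entry of Table~\ref{Tab:h} to Section~\ref{S:pf}.
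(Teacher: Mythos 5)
Your overall strategy --- inflating shapes arc by arc as in Theorem~\ref{T:Dg}, pushing the loop-type marker into a per-arc rational function and into the inserted secondary structures, and reading off an exterior factor $\mathbf{D}_0^i$ --- is exactly the paper's strategy. However, your central combinatorial claim, that the loop formed at an inflated shape-arc ``straddling the shadow'' is always a multi-loop once $g\geq 1$, is false and would yield the wrong $h^{\mathrm{multi}}$. By the paper's definition a multi-loop requires its branch arcs to be mutually non-crossing and nested in the closing arc ($i_1<i_2<j_2<i_3<\cdots<j_k<j_1$); the region closed off by the innermost arc of a stem inflating a \emph{crossing} shape-arc has a branch crossing its closing arc and is therefore none of the five loop types, so no marker is attached there at all. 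You can see this in Table~\ref{Tab:h}: $h^{\mathrm{multi}}$ carries no overall factor of $y$ per shape-arc, whereas your claim would force one. Your stated reason for why $h^i$ depends on $\mathbf{D}_0^i$ is likewise off; it depends on $\mathbf{D}_0^i$ simply because the inserted genus-zero pieces carry their own marked loops.

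The second issue is that you defer precisely the step that constitutes the proof: deriving the entries of Table~\ref{Tab:h}. The paper does this by refining the induced-stack class of Theorem~\ref{T:Dg}. The only new loops created by the inflation (beyond those counted inside the inserted secondary structures) are the loops separating a stack from an induced stack within a stem, and these are classified by what separates them: a nonempty unpaired interval on exactly one side gives a bulge, nonempty unpaired intervals on both sides give an interior loop, and every remaining case (some flanking piece containing an arc) gives a multi-loop. Formally, with markers $y_1,\dots,y_6$ for stacks, stems, hairpins, bulges, interior loops and multi-loops,
\[
\mathcal{N}=\mathcal{K}\times\Bigl(y_6\bigl(\mathcal{D}_0^2-1-2\,\mathcal{X}\,\textsc{Seq}(\mathcal{X})-\mathcal{X}^2\,\textsc{Seq}(\mathcal{X})^2\bigr)+y_4\cdot 2\,\mathcal{X}\,\textsc{Seq}(\mathcal{X})+y_5\,\mathcal{X}^2\,\textsc{Seq}(\mathcal{X})^2\Bigr),
\]
together with $\mathcal{K}=y_1\times\mathcal{R}^r\times\textsc{Seq}(\mathcal{R})$ and $\mathcal{M}=y_2\times\mathcal{K}\times\textsc{Seq}(\mathcal{N})$; setting $y_i=y$ and $y_j=1$ for $j\neq i$ and computing $\mathbf{M}z^2$ with $z=\mathbf{D}_0^i(x,y)$ produces each $h^i$. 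Without this three-way case split your argument does not determine $h^i$, and with your multi-loop claim it determines the wrong one.
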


\begin{table}[htbp]
	\caption{The rational function $h^i(x,y,z)$. } \label{Tab:h}
	\begin{center}
		\begin{tabular}{ccc}
			\hline\noalign{\smallskip}
			\small \text{stack}   &\small \text{hairpin}  & \small \text{bulge}    \\
			\noalign{\smallskip}\hline\noalign{\smallskip}
			\small$\frac{x^{2r} y z^2}{1-x^2 -x^{2r} y (z^2-1)}$             & \small $\frac{x^{2r}  z^2}{1-x^2 -x^{2r} (z^2-1)}$ 			 &\small $\frac{x^{2r}  z^2}{1-x^2 -x^{2r} (z^2-1-\frac{2x}{1-x} (1-y))}$\\
			\noalign{\smallskip}\hline\noalign{\smallskip}
			\multicolumn{2}{c}{\small \text{interior}} &\small \text{multi}  \\
			\noalign{\smallskip}\hline\noalign{\smallskip}
			\multicolumn{2}{c}{	\small $\frac{x^{2r}  z^2}{1-x^2 -x^{2r} (z^2-1-(\frac{x}{1-x})^2 (1-y))}$}
			&\small $\frac{x^{2r}  z^2}{1-x^2 -x^{2r} (y(z^2-1)+\frac{x(2-x)}{(1-x)^2} (1-y))}$\\
			\noalign{\smallskip}\hline\noalign{\smallskip}
		\end{tabular}
	\end{center}
\end{table}

As in  Theorem~\ref{T:gasy} and Theorem~\ref{T:arcclt}, we next compute the singular expansion of $\mathbf{D}_{g}^{i}(x,y)$,
the asymptotics of its coefficients and the limit theorems as follows:
\begin{theorem}\label{T:gasyall}
	Suppose  $g,\lambda,r \geq 1$ and $\lambda\leq r+1$. Then $\mathbf{D}_g^{i} (x,y)$ has the singular expansion
	\begin{equation}\label{Eq:gasyall}
		\mathbf{D}_g^{i} (x,y)= \delta_g(y)\, \left(\rho_i(y)-x \right)^{-\frac{6g-1}{2}} \left(1+ o(1) \right),
	\end{equation}
	as $x\rightarrow \rho(y)$, uniformly for  $y$ restricted to a neighborhood of $1$, where $\rho_i (y)$ is the dominant singularity of $\mathbf{D}_0^{i} (x,y)$ for secondary structures. In addition the coefficients of $\mathbf{D}_g^{i} (x,y)$ are asymptotically given by
	\begin{equation} \label{Eq:coeffgall}
		[x^{n} ] \mathbf{D}_g^{i} (x,y) =c_g (y) n^{\frac{6g-3}{2} } \big(\rho_i (y)
		\big)^{-n}\ \big(1+O(n^{-1})\big),
	\end{equation}
	as $n \rightarrow \infty$, uniformly for $y$ restricted to a small neighborhood of $1$.
\end{theorem}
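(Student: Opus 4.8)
The plan is to follow the same three-step scheme that establishes Theorem~\ref{T:gasy}, replacing the functional relation of Theorem~\ref{T:Dg} by that of Theorem~\ref{T:Dall}. First I would recall that, by Theorem~\ref{T:Dall}, for each loop-type $i$ we have
\[
\mathbf{D}_{g}^{i}(x,y)=\mathbf{D}_{0}^{i}(x,y)\,\mathbf{S}_{g}\!\bigl(h^i(x,y,\mathbf{D}_{0}^{i}(x,y))\bigr),
\]
so that $\mathbf{D}_g^{i}$ is the composition of the polynomial $\mathbf{S}_g$ (of degree $6g-1$ by Theorem~\ref{T:s} together with the Remark following it), a rational function $h^i$, and the algebraic series $\mathbf{D}_0^{i}$. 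The singular behaviour of $\mathbf{D}_0^{i}(x,y)$ near its dominant singularity $\rho_i(y)$ is known from~\citet{Hofacker:98,Reidys:11} (the analogue of Theorem~\ref{T:asyexp} with $\rho$ replaced by $\rho_i$): it has a square-root expansion $\mathbf{D}_0^{i}(x,y)=\pi_i(y)+\delta_i(y)(\rho_i(y)-x)^{1/2}(1+o(1))$ with $\delta_i(1)\neq 0$, uniformly for $y$ near $1$.

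The core of the argument is the observation, already exploited in the proof of Theorem~\ref{T:gasy}, that the inner rational function $h^i$ is engineered so that at $x=\rho_i(y)$ the argument $h^i(x,y,\mathbf{D}_0^{i}(x,y))$ hits the singularity of $\mathbf{S}_g$ — more precisely, so that we land in the \emph{critical case} of singularity analysis in the sense of~\citet{Flajolet:07a}. Concretely I would check, for each column of Table~\ref{Tab:h}, that the denominator of $h^i(x,y,z)$ vanishes precisely when $z$ equals the branch value $\pi_i(y)$ at $x=\rho_i(y)$; equivalently, that $h^i(\rho_i(y),y,\pi_i(y))$ equals the radius of convergence of the formal variable of $\mathbf{S}_g$ after the substitution made in passing from shapes to structures. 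Substituting the square-root expansion of $\mathbf{D}_0^{i}$ into $h^i$ then yields $h^i(x,y,\mathbf{D}_0^{i}(x,y)) = \sigma(y) - \gamma(y)(\rho_i(y)-x)^{1/2}(1+o(1))$ for suitable analytic $\sigma,\gamma$ with $\gamma(1)\neq 0$, where $\sigma(y)$ is exactly the location of the (simple) pole picked up by the leading coefficient of $\mathbf{S}_g$. Because $\mathbf{S}_g$ is a polynomial of degree $6g-1$, composing with an expression of the form $\sigma(y)-\gamma(y)u$ where $u=(\rho_i(y)-x)^{1/2}$ produces a leading term proportional to $(\rho_i(y)-x)^{-(6g-1)/2}$; collecting the analytic prefactors — the leading coefficient of $\mathbf{S}_g$, the factor $\mathbf{D}_0^{i}$ evaluated at $\rho_i(y)$, and the powers of $\gamma(y)$ — into a single function $\delta_g(y)$, analytic and nonzero at $y=1$, gives~(\ref{Eq:gasyall}). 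The transfer to coefficients~(\ref{Eq:coeffgall}) is then a direct application of the transfer theorems of singularity analysis (\citet{Flajolet:07a}), with the uniformity in $y$ near $1$ following from the uniform version of those theorems, exactly as in the proof of Theorem~\ref{T:gasy}; the exponent $-(6g-1)/2$ translates into the subexponential factor $n^{(6g-3)/2}$ and the exponential growth rate $\rho_i(y)^{-n}$.

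The main obstacle is the verification, loop-type by loop-type, that the denominator of $h^i$ in Table~\ref{Tab:h} really does vanish at the correct point, i.e.~that the substitution produces the critical (rather than sub- or super-critical) composition scheme; this requires knowing the explicit branch value $\pi_i(y)$ and the defining equation of $\rho_i(y)$ for each $i$ from~\citet{Hofacker:98,Reidys:11}, and then a short computation matching it against the pole of $h^i$. Once that matching is confirmed — which is really a bookkeeping exercise mirroring the one behind Theorem~\ref{T:Dall} — the rest of the proof is a routine invocation of the machinery already used for Theorem~\ref{T:gasy}, and in fact the cleanest write-up is to prove a single lemma: if $F(x,y)$ is algebraic with a uniform square-root singularity at $\rho(y)$, if $h$ is rational and tuned to the critical case, and if $P$ is a polynomial of degree $D$, then $P(h(x,y,F(x,y)))\cdot F(x,y)$ has a uniform singular expansion with exponent $-D/2$, and apply it with $P=\mathbf{S}_g$, $D=6g-1$, $h=h^i$, $F=\mathbf{D}_0^{i}$.
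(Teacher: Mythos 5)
Your overall architecture is the one the paper intends (Theorem~\ref{T:gasyall} is proved exactly as Theorem~\ref{T:gasy}, with Theorem~\ref{T:Dall} in place of Theorem~\ref{T:Dg}), but the central analytic step is stated incorrectly, and the conclusion you draw from it does not follow. You claim that substituting the square-root expansion of $\mathbf{D}_0^{i}$ into $h^i$ yields
\[
h^i(x,y,\mathbf{D}_0^{i}(x,y)) = \sigma(y) - \gamma(y)\left(\rho_i(y)-x\right)^{\frac{1}{2}}\left(1+o(1)\right),
\]
i.e.\ that $h^i$ remains \emph{bounded} at the singularity, and then that applying the degree-$(6g-1)$ polynomial $\mathbf{S}_g$ produces a term $\left(\rho_i(y)-x\right)^{-\frac{6g-1}{2}}$. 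This is a non sequitur: a polynomial evaluated at a quantity of the form $\sigma-\gamma u$ with $u\to 0$ is again of the form (analytic) $+\,O(u)$, which would give exponent $+\frac{1}{2}$, not $-\frac{6g-1}{2}$. Moreover, $\mathbf{S}_g$ is a polynomial, so it has no singularity and no finite radius of convergence for the inner function to ``hit''; the critical-composition phenomenon here concerns the composition of the \emph{rational function} $h^i(x,y,\cdot)$ with $\mathbf{D}_0^{i}$, not the outer polynomial $\mathbf{S}_g$.

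The correct mechanism --- carried out explicitly for the arc case in the paper's proof of Theorem~\ref{T:gasy}, see eq.~(\ref{Eq:dh}) --- is that $h^i(x,y,\mathbf{D}_0^{i}(x,y))$ \emph{diverges} like $\left(\rho_i(y)-x\right)^{-\frac{1}{2}}$. Writing the denominator of $h^i$ as $P(x,y)-Q(x,y)\,\mathbf{D}_0^{i}(x,y)^2$ with $P,Q$ analytic, criticality means that $P-Q\pi_i^2$ vanishes at $x=\rho_i(y)$; since that quantity is analytic in $x$ it is $O\!\left(\rho_i(y)-x\right)$ there, so the denominator is dominated by the cross term $-2Q\pi_i\delta_i\left(\rho_i(y)-x\right)^{\frac{1}{2}}$ and vanishes like a square root, whence $h^i\sim k(y)\left(\rho_i(y)-x\right)^{-\frac{1}{2}}$ with $k(1)\neq 0$. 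Only then does the polynomial $\mathbf{S}_g$ of degree $6g-1$ give $\mathbf{S}_g(h^i)\sim \kappa_g(3g-1)\,k(y)^{6g-1}\left(\rho_i(y)-x\right)^{-\frac{6g-1}{2}}$, after which multiplication by $\mathbf{D}_0^{i}$ and the uniform transfer theorem yield eqs.~(\ref{Eq:gasyall}) and~(\ref{Eq:coeffgall}). You rightly flag that criticality must be verified entry by entry in Table~\ref{Tab:h}, but with your bounded local form of $h^i$ that verification would lead nowhere; replace it by the divergent expansion and redo the composition step accordingly.
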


\begin{theorem}\label{T:arccltall}
	For any  $g,\lambda,r \geq 1$ and $\lambda\leq r+1$, the distribution of
	stacks, hairpin loops, bulge loops, interior loops and multi-loops in genus $g$ structures  satisfies the central and local limit theorem with mean $\mu n$ and variance $\sigma n$, where $\mu$ and $\sigma$ are the same as those of secondary structures.	
\end{theorem}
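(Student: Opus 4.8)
The plan is to run the argument of Theorem~\ref{T:arcclt} once for each loop type, feeding the coefficient asymptotics of Theorem~\ref{T:gasyall} into Bender's theorem, and then to exploit the fact that the Gaussian parameters produced by Bender's theorem depend only on the \emph{location} of the dominant singularity --- which, by Theorem~\ref{T:gasyall}, is exactly the singularity $\rho_i(y)$ already governing the genus-zero case.

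First I fix a loop type $i\in\{\mathrm{stack},\mathrm{hairpin},\mathrm{bulge},\mathrm{interior},\mathrm{multi}\}$ and set $f_i(x,s):=\mathbf{D}_g^{i}(x,e^{s})$. By Theorem~\ref{T:gasyall},
\[
[x^{n}] f_i(x,s)=c_g(e^{s})\,n^{\frac{6g-3}{2}}\,\rho_i(e^{s})^{-n}\bigl(1+O(n^{-1})\bigr)
\]
holds uniformly for $s$ in a neighborhood of $0$, with $c_g$ continuous and nonzero and $\rho_i$ analytic and nonzero near $0$ (the analyticity of $\rho_i$ being inherited from the secondary-structure case of~\citet{Hofacker:98,Reidys:11}). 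This is precisely the input hypothesis of Bender's theorem (Supplementary Material), so, writing $\theta_i(s):=\rho_i(e^{s})$, it yields that the number $\mathbb{Y}_{g,n}^{i}$ of loops of type $i$ in a uniformly random genus-$g$ structure of length $n$, once centered by $\mu_i n$ and scaled by $\sqrt{\sigma_i^{2}n}$, converges to a standard Gaussian with speed $O(n^{-1/2})$, where
\[
\mu_i=-\frac{\theta_i'(0)}{\theta_i(0)},\qquad
\sigma_i^{2}=\Bigl(\frac{\theta_i'(0)}{\theta_i(0)}\Bigr)^{2}-\frac{\theta_i''(0)}{\theta_i(0)},
\]
provided the nondegeneracy condition $\sigma_i^{2}>0$ holds.

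The crucial observation is that $\mu_i$ and $\sigma_i^{2}$ are functionals of $\theta_i$ alone: the subexponential exponent $\frac{6g-3}{2}$ and the prefactor $c_g$ contributed by the shape polynomial $\mathbf{S}_g$ are constant in $y$ and therefore cancel in the computation of mean and variance, exactly as a constant prefactor and a fixed polynomial order do in the quasi-powers framework. Since $\rho_i(y)$ is precisely the function entering the central limit theorem for secondary structures in~\citet{Hofacker:98,Reidys:11}, we conclude that $\mu_i$ and $\sigma_i^{2}$ coincide with the mean- and variance-constants for secondary structures; in particular $\sigma_i^{2}>0$ is inherited from that case, so no separate nondegeneracy check is needed. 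For the local limit theorem I would upgrade Bender's theorem to its local version along the lines of~\citet{Jin-Reidys}: beyond the above it suffices to show that, for real $t\not\equiv 0\pmod{2\pi}$, the power series $x\mapsto\mathbf{D}_g^{i}(x,e^{it})$ has radius of convergence strictly exceeding $\rho_i(1)$ (i.e.\ the counting sequence is aperiodic and $y=1$ is the unique dominant singularity on $|y|=1$); by Theorem~\ref{T:Dall} this separation is inherited from the corresponding statement for $\mathbf{D}_0^{i}(x,e^{it})$, since $\mathbf{D}_g^{i}$ is a fixed rational function of $x$, $y$ and $\mathbf{D}_0^{i}(x,y)$ composed with the polynomial $\mathbf{S}_g$.

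I expect this last point to be the only genuine obstacle: one must control the dominant singularity of $\mathbf{D}_g^{i}(x,e^{it})$ uniformly over the whole circle $|y|=1$, not merely near $y=1$ as is needed for the CLT, and rule out coincident singularities when $t\neq 0$. Everything else is a routine transcription of the genus-zero proofs, because passing to genus $g$ only inflates the polynomial order of the $x$-singularity from $\frac12$ to $-\frac{6g-1}{2}$ without moving it, and the Gaussian parameters see only its position.
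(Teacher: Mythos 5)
Your proposal follows essentially the same route as the paper: the paper derives Theorem~\ref{T:arccltall} by feeding the uniform coefficient asymptotics of Theorem~\ref{T:gasyall} into Bender's theorem exactly as for Theorem~\ref{T:arcclt}, and observes that since genus only changes the subexponential factor while the dominant singularity $\rho_i(y)$ remains that of $\mathbf{D}_0^{i}(x,y)$, the constants $\mu$ and $\sigma$ coincide with the secondary-structure case (with the local version handled as in Jin--Reidys). Your additional care about nondegeneracy and the aperiodicity condition on $|y|=1$ is a reasonable elaboration of details the paper leaves implicit, but the argument is the same.
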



In difference to the case of arcs, the correlation between the means of these parameters and the minimum stack-size is as follows:
\begin{corollary}\label{C:otherstack}
	Suppose $g\geq 1$, $1\leq \lambda \leq 6$, $1\leq r \leq 6$ and $\lambda\leq r+1$. In $r$-canonical RNA structures of genus $g$ with arc-length $\geq \lambda$, the expectation of the number of stacks, hairpin loops, bulge loops, etc, decreases as minimum stack-size $r$ increases or as minimum arc-length $\lambda$ increases.
\end{corollary}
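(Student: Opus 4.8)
The plan is to reduce to genus zero and then study the dominant singularity as a function of $(r,\lambda)$. By Theorem~\ref{T:arccltall}, for each of the five loop statistics the mean in genus $g$ structures coincides with the mean $\mu_i$ for secondary structures, so it suffices to treat $g=0$. Applying the specialization of Bender's theorem used for Theorem~\ref{T:arcclt}, now with $f(x,e^s)=\mathbf{D}_0^i(x,e^s)$, gives $\mu_i=-\theta_i'(0)/\theta_i(0)$ with $\theta_i(s)=\rho_i(e^s)$, i.e.
\[
\mu_i=-\frac{\rho_i'(1)}{\rho_i(1)},
\]
where $\rho_i(y)$ is the dominant singularity of $\mathbf{D}_0^i(x,y)$. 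Setting $y=1$ makes every loop-marking trivial, so $\mathbf{D}_0^i(x,1)$ collapses to the univariate generating function $\mathbf{D}_0(x,1)$ of all $r$-canonical secondary structures with arc-length $\ge\lambda$; hence $\rho_i(1)=\rho(1)=:\rho$ is one and the same number for every loop type, namely the branch point of $\mathbf{D}_0(x,1)$ cut out by the discriminant equation of Theorem~\ref{T:arcgf}, and it depends on $r$ and $\lambda$ only.

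Next I would make $\rho_i'(1)$ explicit. In the Hofacker--Schuster--Stadler/Barrett framework each $\mathbf{D}_0^i(x,y)$ satisfies a quadratic functional equation analogous to eq.~(\ref{Eq:arcfe}), with coefficient polynomials $\mathbf{A}_i,\mathbf{B}_i$ obtained by reading off $h^i$ from Table~\ref{Tab:h} and which reduce to $\mathbf{A},\mathbf{B}$ at $y=1$. Thus $\rho_i(y)$ is the least positive root of $\Delta_i(x,y):=\mathbf{B}_i(x,y)^2-4(x^2y)^r\mathbf{A}_i(x,y)=0$, and implicit differentiation at $y=1$ yields
\[
\rho_i'(1)=-\frac{\partial_y\Delta_i(\rho,1)}{\partial_x\Delta_i(\rho,1)}.
\]
Since $\Delta_i(x,1)=\Delta(x,1)$ for all $i$, the denominator $\partial_x\Delta_i(\rho,1)$ is independent of the loop type; only $\partial_y\Delta_i(\rho,1)$ carries loop-type information, and it is a completely explicit rational expression in $\rho$, $r$ and $\lambda$, arising from the $(1-y)$-terms appearing in the corresponding $h^i$. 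Substituting, $\mu_i$ becomes an explicit rational function $M_i(\rho;r,\lambda)$ evaluated along $\rho=\rho(r,\lambda)$.

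Finally, the monotonicity. Fixing the loop type and one of the two parameters, one studies $M_i(\rho(r,\lambda);r,\lambda)$ as $r$ (resp.\ $\lambda$) ranges over the integers with $1\le\lambda\le6$, $1\le r\le6$, $\lambda\le r+1$. For each such $(r,\lambda)$ the branch point $\rho$ is algebraic over $\mathbb{Q}$, so each $\mu_i$ is an explicit algebraic number and the asserted decrease can be certified by finitely many exact symbolic comparisons, exactly as the entries of Table~\ref{Tab:clt} are obtained for the arc statistic; this already proves the corollary on the stated range. For a uniform statement one would instead show $\partial M_i/\partial r<0$ and $\partial M_i/\partial\lambda<0$ along the curve $\rho=\rho(r,\lambda)$, which needs the signs of $d\rho/dr$ and $d\rho/d\lambda$ (both obtained by implicit differentiation of $\Delta(x,1)=0$) and then a reduction to polynomial positivity in $\rho$ on $(0,\rho(1,1))$. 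I expect this last step --- disentangling the explicit $(r,\lambda)$-dependence of $M_i$ from the implicit $(r,\lambda)$-dependence of $\rho$ and pinning down the sign of the resulting polynomial --- to be the main obstacle; everything preceding it is routine given Theorems~\ref{T:arcgf}, \ref{T:asyexp}, \ref{T:Dall} and \ref{T:arccltall}.
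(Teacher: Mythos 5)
Your proposal is correct and follows essentially the route the paper itself takes (implicitly --- the paper gives no written proof of this corollary beyond Theorem~\ref{T:arccltall} and the same kind of computation that produces Table~\ref{Tab:clt}): reduce to secondary structures, extract $\mu_i=-\rho_i'(1)/\rho_i(1)$ from the dominant singularity via Bender's theorem, and certify the finitely many $(r,\lambda)$ comparisons by exact symbolic evaluation. Your closing caveat about a uniform monotonicity argument is well taken but unnecessary for the corollary as stated, since the parameter range is finite.
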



\section{Block decomposition}\label{S:loop}

In this section we introduce the block decomposition of a topological RNA structure.

Given an RNA structure, $S$, its arc-set induces the line graph $\Sigma(S)$~\citep{Whitney:32}, obtained by mapping each arc $\alpha$ into the
vertex $\Sigma(\alpha)=v_\alpha$ and connecting any two such vertices iff their corresponding arcs are crossing in $S$,
$\Sigma\colon S \to \Sigma(S)$.
An \emph{arc-component} of a structure $S$ is the diagram consisting of a set of arcs $A$ such that $\Sigma(A)$ is a component in $\Sigma(S)$.
The arc-component containing only one arc is called \emph{trivial}.
Let $S\{i,j\}$ denote the arc-component with the left- and rightmost endpoints $i$ and $j$.
By construction, for any two different arc-components $S\{i_1,j_1\}$ and $S\{i_2,j_2\}$ with $i_1<i_2$, we have either $i_1<j_1<i_2<j_2$ or
$i_1<i_2<j_2<j_1$.



An unpaired vertex $k$ is said to be \emph{interior} to the arc-component $S\{i,j\}$ if $i<k<j$.  If there is no other
arc-component $S\{p,q\}$  such that $i<p<k<q<j$, we call $k$ \emph{immediately interior} to $S\{i,j\}$.
An \emph{exterior vertex} is an unpaired vertex which is not interior to any arc-component. A \emph{block} is an arc-component
together with all its immediately interior vertices.

Each paired vertex is contained in a base pair, belonging to a unique  arc-component, while an unpaired vertex is either exterior
or interior to a unique arc-component. Each block is characterized by its arc-component, whence any topological RNA structure can
be uniquely decomposed into blocks and exterior vertices, see Fig.~\ref{F:loopdec}.

\begin{figure}
	\centering
	\includegraphics[width=1\textwidth]{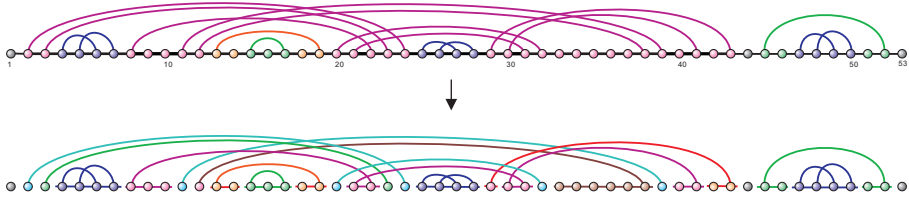}
	\caption
	{\small The block decomposition of an RNA structure. The structure  is decomposed into blocks and exterior vertices.
	}
	\label{F:loopdec}
\end{figure}

A block is call \emph{irreducible} if its arc-component contains at least two arcs.
A block with endpoints $i_2$ and $j_2$ is  \emph{nested} in a block with endpoints  $i_1$ and $j_1$   if $i_1<i_2<j_2<j_1$.
Removing  all the blocks nested in an irreducible block could induce stacks.
As shown by \cite{Han:14,Li:13}, an irreducible block is projected to an irreducible shadow by removing all nested blocks
and interior vertices, and collapsing its original stacks and induced stacks into single arcs, see Fig.~\ref{F:induce}.

\begin{figure}
	\centering
	\includegraphics[width=1\textwidth]{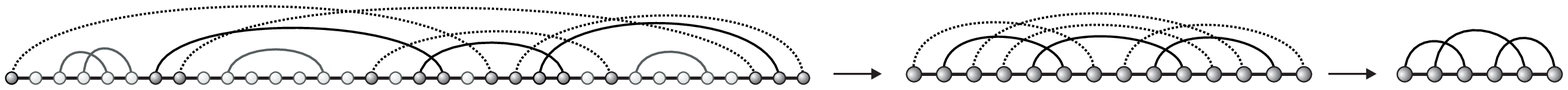}
	\caption
	    {\small  From an irreducible block to its irreducible shadow. Nested blocks and interior vertices are removed and
              all stacks are collapsed.
	}
	\label{F:induce}
\end{figure}

For secondary structures, each arc-component consists of a single arc. Thus each block is trivial and we immediately observe that they organize
as stacks, hairpin loops, bulges, interior loops and multi-loops, depending on  the number of interior vertices and nested blocks. Therefore,
the block decomposition coincides with the standard decomposition of secondary structures into stacks, loops and exterior vertices~\citep{Waterman:78s}.
In this case, a loop naturally corresponds to a boundary component in the fatgraph model of secondary structures.


However, boundary components in pk-structures can traverse different blocks. Moreover, these boundaries intertwine, see Fig.~\ref{F:g1irr}.
The irreducible shadow of a block can be viewed as a combination of intertwined boundary components, see Fig.~\ref{F:g1irr}.
\begin{figure}
	\centering
	\includegraphics[width=1\textwidth]{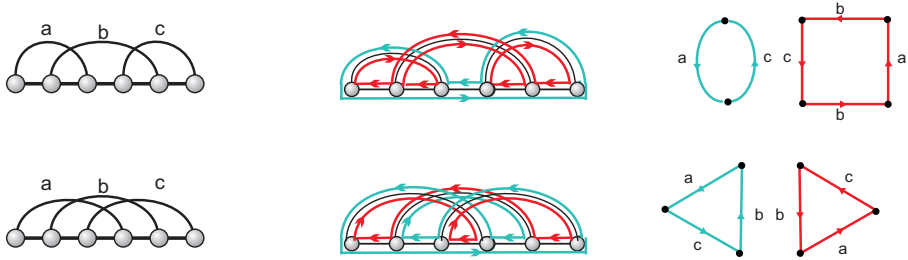}
	\caption
	{\small Irreducible shadows of genus 1, represented as intertwining boundary components.
	}
	\label{F:g1irr}
\end{figure}

\section{Irreducible shadows}\label{S:ploop}

%
In this section, we have a closer look at irreducible shadows. 
By definition, an irreducible block is characterized by an irreducible shadow that appears in the block decomposition of a topological RNA structure. The four irreducible shadows of genus one  naturally correspond to  four types of pseudoknots. They are H-type pseudoknots, kissing hairpins (K-type), $3$-knots (L-type) and $4$-knots (M-type),  see Fig.~\ref{F:genus1} and Supplementary Material. 

\begin{figure}
	\centering
	\includegraphics[width=1\textwidth]{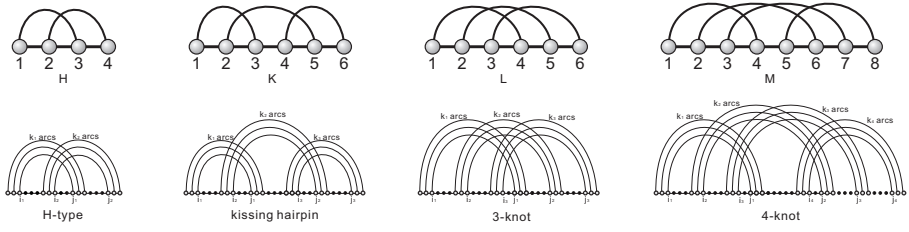}
	\caption
	{\small The four irreducible shadows of genus $1$ and four types of pseudoknots.
	}
	\label{F:genus1}
\end{figure}

To investigate the distribution of pseudoknots in structures of genus $g$, we consider the inflation from shapes to structures
marking these pseudoknots. To this end we introduce the bivariate generating functions for the generating functions of shapes
and structures of genus $g$. For $I\in \{H,K,L,M\}$, let $s_g^{I} (n,l^I)$ and $d_g^{I} (n,l^I)$ denote the number  of genus $g$ shapes
and structures having $l^I$ $I$-type pseudoknots. Their corresponding generating function are denoted by $\mathbf{S}_g^{I} (x,y)$
and $\mathbf{D}_g^{I} (x,y)$. For example for $I=H$ and $g=1,2$, we compute
\begin{align*}
	\mathbf{S}_1^{H} (x,y) &=x^3 (x+1) (x+2)+ {x}^{2}(1+x) y,\\
	\mathbf{S}_2^{H} (x,y) &= {x}^{4} \left( 1+x \right) ^{2} \Big( 17+143 x +447 x^2+637 x^3+420 x^4+105 x^5\\
	&+\left(20x +36 x^2+ 14 x^3 \right) y+(4+5 x) y^2\Big).
\end{align*}

The computation of $\mathbf{S}_g^{I} (x,y)$ for arbitrary genus is obtained as follows: in the block decomposition of the shape,
each $I$-type pseudoknot corresponds to an irreducible block of the respective type. To count these pseudoknots, we only need
to enumerate the number of the corresponding irreducible shadows in the decomposition.
To this end we define the bivariate generating function $ {\bf I}^I_g(x,y)$ of irreducible shadows of genus $g$, where $y$
marks the number of $I$-type pseudoknots, where $I\in \{H,K,L,M\}$. Then
\begin{align*}
	{\bf I}^H_1(x,y) &= {x}^{2}y  + 2 x^3+x^4 \\
	{\bf I}^K_1(x,y)={\bf I}^L_1(x,y) &= {x}^{3}y + x^2+ x^3+x^4\\
	{\bf I}^M_1(x,y)&= {x}^{4}y + x^2 + 2 x^3\\
	{\bf I}^I_g(x,y)&={\bf I}_g(x) \qquad \text{for any } g\geq 2 \text{ and } I\in \{H,K,L,M\} .
\end{align*}

The trivariate generating functions of irreducible shadows and shapes of genus $g$
with $n$ arcs and $l^I$ pseudoknots of type $I$ are denoted by
\begin{align*}
	{\bf I}^I(x,y,t)&=\sum_{g\geq 1}{\bf I}^I_g(x,y)\,t^g,\\
	{\bf S}^I(x,y,t)&=1+\sum_{g\geq 1}{\bf S}^I_g(x,y)\,t^g=1+\sum_{g\geq 1}\sum_{n=2g}^{6g-1}\,s_g^{I} (n,l^I) \, x^n y^{l^I} t^g .
\end{align*}

We derive
\begin{theorem}\label{T:SIrelation}
	For any type $I\in \{H,K,L,M\}$, the generating functions  ${\bf S}^I(x,y,t)$ and ${\bf I}^I(x,y,t)$ satisfy
	\begin{equation}\label{Eq:SIrelation}
		{\bf S}^I(x,y,t) =1+ x \left({\bf S}^I(x,y,t)-1\right)^2+(x+1){\bf I}^I\left(\frac{x{\bf S}^I(x,y,t)^2 }{1-x \left({\bf S}^I(x,y,t)^2-1\right)},y,t\right) .
	\end{equation}
\end{theorem}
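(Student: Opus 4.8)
The plan is to establish the functional equation by tracking, in the block decomposition of a shape, how an arbitrary shape of genus $\geq 1$ is assembled from irreducible shadows by the two basic operations: nesting and concatenation. First I would recall the structure of the decomposition from Section~\ref{S:loop}: a shape (a linear chord diagram without $1$-arcs in which every stack has length one) decomposes uniquely into blocks and exterior vertices, each block being an arc-component together with its immediately interior vertices. Since a shape has no unpaired vertices, all interior/exterior vertices disappear, and each block is simply an arc-component; a trivial block is a single noncrossing arc and an irreducible block projects (by collapsing induced stacks) to an irreducible shadow of the same genus. The key bookkeeping point is that the genus is additive over the blocks nested along a given ``level'' and additive under the recursive nesting, so that $t$ correctly records genus, while $y$ records the number of $I$-type irreducible shadows, which only ever arise inside irreducible blocks of type $I$.

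The core of the argument is a symbolic-method translation of ``a shape is a sequence of arc-components, each of which is either a single arc over a nested shape-structure, or an irreducible shadow whose arcs have been inflated by nested shape-structures.'' Concretely, ${\bf S}^I(x,y,t)-1$ is the generating function of nonempty shapes; writing out the outermost sequence of arc-components gives a term for the single-arc (trivial, genus $0$) case and a term for the irreducible case. I would argue:
\begin{itemize}
\item The empty shape contributes $1$.
\item A trivial outermost block is one arc $(1,n)$ enclosing an arbitrary (possibly empty) shape; an arc is marked by $x$. Two such blocks side by side, or a trivial block sitting next to a larger configuration, is captured by the standard sequence/concatenation construction, which is precisely why the noncrossing part produces the quadratic term $x({\bf S}^I-1)^2$ — this mirrors Corollary~\ref{C:CS} with the substitution $x \mapsto x{\bf S}^I(\cdot)^2/(1-x({\bf S}^I(\cdot)^2-1))$ that re-weights each arc of a shadow by the shape that can be inflated in its ``local'' positions.
\item An irreducible outermost block is an irreducible shadow of some genus $g'\geq 1$ and type-count recorded by ${\bf I}^I$; each of its arcs may be replaced by a stack of arbitrary length one in the shape sense — but more precisely each arc slot may carry a nested shape — giving the substituted argument $\dfrac{x{\bf S}^I(x,y,t)^2}{1-x({\bf S}^I(x,y,t)^2-1)}$, and the extra factor $(x+1)$ accounts for whether the whole irreducible block does or does not sit at the very top level (the ``rainbow'' versus ``no rainbow'' distinction highlighted in the Remark after Theorem~\ref{T:s}).
\end{itemize}
Assembling these three contributions in the unique block decomposition yields eq.~(\ref{Eq:SIrelation}).

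The main obstacle is verifying that the substitution $z \mapsto \dfrac{xz^2}{1-x(z^2-1)}$ is exactly the right ``inflation weight'' for an arc of an irreducible shadow, and that applying it arc-by-arc to ${\bf I}^I$ correctly reconstructs all shapes with an irreducible outermost block without double-counting. This requires checking that the places where a nested shape can be inserted around an arc of a shadow are in bijection with the positions encoded by this rational substitution, and that induced stacks (created when nested blocks are deleted, see Fig.~\ref{F:induce}) are accounted for with the correct multiplicity — this is the step where the specific form of the denominator $1-x(z^2-1)$, rather than a naive geometric series $1/(1-xz^2)$, matters. I would pin this down by first proving the case $I$ ignored (i.e., the plain identity relating ${\bf S}_g(x)$ and ${\bf I}_g(x)$ via the same substitution, which specializes the claimed equation at $y=1$ and is essentially the shape-from-shadows construction of~\cite{Han:14,Li:13}), and then observing that marking $I$-type shadows by $y$ is a purely passive refinement: the substitution and the $(x+1)$ factor do not touch $y$, and ${\bf I}^I_g(x,y)$ already has the $y$-marking built in, so the refined identity follows from the unrefined one by the same term-by-term correspondence. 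Finally I would sanity-check against the listed low-genus data, confirming that eq.~(\ref{Eq:SIrelation}) reproduces $\mathbf{S}_1^{H}(x,y)$ and $\mathbf{S}_2^{H}(x,y)$ from ${\bf I}^H_1(x,y)$ and ${\bf I}^H_2(x,y)={\bf I}_2(x)$.
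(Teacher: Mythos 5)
Your proposal has the right ingredients --- the block decomposition, the inflation substitution $z\mapsto xz^2/(1-x(z^2-1))$ for each arc of an irreducible shadow, additivity of genus and of the $y$-marking, and the reduction to the unmarked case --- but the central step is missing: you never actually derive the specific form of eq.~(\ref{Eq:SIrelation}). You read the three summands $1$, $x({\bf S}^I-1)^2$ and $(x+1){\bf I}^I(\cdots)$ as a direct case analysis on ``the outermost block,'' and that reading does not hold up. A shape can have several blocks concatenated at the top level, and your cases (empty / one trivial outermost block / one irreducible outermost block) are neither exhaustive nor do they match the terms: ``an arc enclosing a nested shape'' would contribute something like $x\cdot{\bf S}^I$, not $x({\bf S}^I-1)^2$; the no-$1$-arc and no-stack constraints that must be enforced when a noncrossing arc wraps its interior are not addressed at all (they are precisely what forces the interior to be nonempty and rainbow-free); and the factor $(x+1)$ does not record ``rainbow vs.\ no rainbow at the very top level'' --- it records whether an irreducible component is covered by one additional noncrossing arc inside its own sub-diagram. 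Your appeal to ``the standard sequence/concatenation construction'' to explain the quadratic term is exactly the point that needs proof, since a sequence construction produces a geometric series $1/(1-\cdot)$, and one must explain why no such factor appears in the final identity.

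The paper closes this gap by \emph{not} interpreting eq.~(\ref{Eq:SIrelation}) directly. It partitions a shape into a sequence of sub-diagrams of two types, according to whether the maximal arc-component of the sub-diagram is a single arc (series ${\bf T}^I_1$) or irreducible (series ${\bf T}^I_2$), and derives a system of three coupled equations: ${\bf S}^I{}^{-1}=1-{\bf T}^I_1-{\bf T}^I_2$ (sequence construction), ${\bf T}^I_1=x\bigl({\bf T}^I_2+\frac{({\bf T}^I_1+{\bf T}^I_2)^2}{1-({\bf T}^I_1+{\bf T}^I_2)}\bigr)$ (the interior of a covering arc is a single type-$2$ sub-diagram or a sequence of length at least two, which encodes the no-$1$-arc/no-stack constraints), and ${\bf T}^I_2={\bf S}^I{}^{-1}{\bf I}^I\bigl(\frac{x{\bf S}^I{}^2}{1-x({\bf S}^I{}^2-1)},y,t\bigr)$ (inflation of an irreducible shadow, with only $2n-1$ interior insertion slots, whence the prefactor ${\bf S}^I{}^{-1}$). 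Equation~(\ref{Eq:SIrelation}) then falls out by eliminating ${\bf T}^I_1$ and ${\bf T}^I_2$; the terms $x({\bf S}^I-1)^2$ and $(x+1){\bf I}^I(\cdots)$ are artifacts of that elimination, not of a one-step decomposition. To repair your argument you would need to introduce these (or equivalent) auxiliary classes and solve the resulting system; the low-genus sanity check you propose is useful evidence but not a substitute for this step.
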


Theorem~\ref{T:SIrelation} implies via exacting the coefficient of $t^g$ on both sides of eq.~(\ref{Eq:SIrelation})
a recursion allowing the computation of ${\bf S}^I_g(x,y)$ for any given genus:
\begin{corollary}\label{C:shape}
	For $g\ge 1$, ${\bf S}^I_g(x,y)$ satisfies the following recursion
	\begin{equation}\label{Eq:shaperec}
	\begin{aligned}
		{\bf S}^I_g(x,y)= &x \sum_{i=1}^{g-1} {\bf S}^I_i(x,y){\bf S}^I_{g-i}(x,y)\\
		&+(x+1)
		\sum_{j=1}^{g} [t^{g-j}]{\bf I}_j^I\left(\frac{x\,\left(\sum_{k=0}^{g-j}{\bf S}^I_k(x,y) t^k\right)^2}{1-x\,\left(\left(\sum_{k=0}^{g-j}{\bf S}^I_k(x,y) t^k\right)^2-1\right)},y\right),
	\end{aligned}
	\end{equation}
	where ${\bf S}^I_0(x,y)=1$.
\end{corollary}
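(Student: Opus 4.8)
The plan is to extract the coefficient of $t^g$ from both sides of the functional equation~(\ref{Eq:SIrelation}) in Theorem~\ref{T:SIrelation} and solve for ${\bf S}^I_g(x,y)$. Write $\mathcal{S} = {\bf S}^I(x,y,t) = 1 + \sum_{k\ge 1} {\bf S}^I_k(x,y)\,t^k$ and set ${\bf S}^I_0(x,y)=1$ as a bookkeeping convention, so that $\mathcal{S} = \sum_{k\ge 0}{\bf S}^I_k(x,y)\,t^k$. First I would handle the quadratic term: since $\mathcal{S}-1 = \sum_{k\ge 1}{\bf S}^I_k t^k$ has no constant term, its square $\big(\mathcal{S}-1\big)^2$ has lowest-order term in $t^2$, and its coefficient of $t^g$ is the Cauchy convolution $\sum_{i=1}^{g-1}{\bf S}^I_i {\bf S}^I_{g-i}$. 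This accounts for the first sum in~(\ref{Eq:shaperec}), carrying the factor $x$.

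Next I would treat the composition term $(x+1)\,{\bf I}^I\!\big(u,y,t\big)$, where $u = u(t) = \dfrac{x\,\mathcal{S}^2}{1-x(\mathcal{S}^2-1)}$. Expanding ${\bf I}^I(u,y,t) = \sum_{j\ge 1}{\bf I}^I_j(u,y)\,t^j$, the coefficient of $t^g$ picks up contributions where a factor $t^j$ comes from the explicit $t$ in ${\bf I}^I$ and the remaining $t^{g-j}$ comes from the $t$-dependence hidden inside $u$. The key observation is that $u(t)$, viewed as a power series in $t$, depends on $\mathcal{S}$ only through ${\bf S}^I_k$ for $k\le g-j$ when we want the coefficient of $t^{g-j}$ in ${\bf I}^I_j(u,y)$ — because $u$ is built from $\mathcal{S}^2 = \big(\sum_k {\bf S}^I_k t^k\big)^2$ and higher $t$-powers of $\mathcal{S}$ cannot lower the $t$-degree. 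Hence inside ${\bf I}_j^I$ we may replace $\mathcal{S}$ by its truncation $\sum_{k=0}^{g-j}{\bf S}^I_k(x,y)\,t^k$ without affecting $[t^{g-j}]$; this yields exactly the second sum in~(\ref{Eq:shaperec}), with the inner argument $\dfrac{x\,(\sum_{k=0}^{g-j}{\bf S}^I_k t^k)^2}{1-x((\sum_{k=0}^{g-j}{\bf S}^I_k t^k)^2-1)}$.

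Two small points need to be checked to make the extraction legitimate. First, the constant term ($t^0$) of~(\ref{Eq:SIrelation}): the left side gives $1$, and on the right side the quadratic term contributes $0$ while ${\bf I}^I\big(u(0),y,0\big)=0$ since ${\bf I}^I$ has no genus-$0$ part (the sum over $g$ starts at $1$); so the equation is consistent at order $t^0$, and for $g\ge 1$ we get precisely~(\ref{Eq:shaperec}). Second, one should verify that the substitution $u(t)$ is a well-defined formal power series in $t$ — that is, that $1-x(\mathcal{S}^2-1)$ is invertible as a formal series, which holds because its constant term (in $t$) is $1-x(\, {\bf S}^I_0{}^2 - 1) = 1$, a unit. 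Both verifications are routine. The main obstacle — really the only subtlety — is the truncation argument in the composition term: one must argue cleanly that raising the truncation order of $\mathcal{S}$ beyond $g-j$ does not change $[t^{g-j}]{\bf I}_j^I(u,y)$, which follows from the fact that $u - u_{\text{trunc}}$ lies in $t^{\,g-j+1}\cdot(\text{formal series})$ and ${\bf I}_j^I(\cdot,y)$ is a polynomial (hence continuous in the $t$-adic topology). Once this is in place, collecting the two contributions gives~(\ref{Eq:shaperec}).
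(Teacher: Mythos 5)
Your proposal is correct and follows exactly the route the paper intends: the paper derives the corollary by extracting the coefficient of $t^g$ from eq.~(\ref{Eq:SIrelation}), and your treatment of the convolution term and the truncation of $\mathcal{S}$ inside the argument of ${\bf I}^I_j$ (justified by $u-u_{\mathrm{trunc}}$ lying in $t^{g-j+1}$ times a formal series and ${\bf I}^I_j$ being a polynomial) supplies precisely the details the paper leaves implicit. No gaps.
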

The polynomials ${\bf S}^H_g(x,y)$ for $1\leq g\leq 4$ are listed in Supplementary Material.

\begin{corollary}\label{C:shape2}
	For fixed genus $g\geq 1$, a shape containing at least one H-type (K-, L- or M- types) pseudoknot has at most $6g-3$ arcs ($6g-2$, $6g-2$ or $6g-1$). This bound is sharp, i.e., there exist shapes with $6g-3$ arcs ($6g-2$, $6g-2$ or $6g-1$) containing one H-type (K-, L- or M- types) pseudoknot.
\end{corollary}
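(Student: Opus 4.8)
The plan is to track the maximal number of arcs in a shape of genus $g$ that contains at least one $I$-type pseudoknot, for $I\in\{H,K,L,M\}$, by combining the block decomposition of shapes with the known degrees of the relevant polynomials. Recall from Theorem~\ref{T:s} that $\mathbf{S}_g(x)$ has degree $6g-1$, and that a shape of genus $g$ decomposes (via the block decomposition of Section~\ref{S:loop}) into irreducible blocks, each of which projects to an irreducible shadow, nested and concatenated according to the structure of the underlying noncrossing skeleton. Under this decomposition the genus is additive over the irreducible blocks, and inflating a shadow of genus $g'$ with $m$ arcs back to a block contributes at most $2m+1$ arcs in a shape (each arc can be ``doubled'' into a $2$-stack — the maximal stack length allowed in a shape — and one extra arc may arise as an induced rainbow around the block). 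Since an irreducible shadow of genus $g'$ has at most $6g'-2$ arcs (stated in the excerpt, following~\citet{Huang:11}), a single irreducible block of genus $g'$ contributes at most $2(6g'-2)+1 = 12g'-3$ arcs.

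First I would reduce to the case where the shape consists of a single irreducible block. If a shape of genus $g$ with the maximal number of arcs had two or more irreducible blocks, of genera $g_1,\dots,g_k$ with $\sum g_i = g$ and $k\ge 2$, then its total arc-count would be at most $\sum_i (12 g_i - 3) + (\text{arcs from the noncrossing part})$; a careful bookkeeping of the noncrossing skeleton (which in a shape can add at most $6g-1 - 6g' $ type contributions — this is exactly the gap between $\deg \mathbf{S}_g = 6g-1$ and the shadow bound) shows this cannot exceed what is achieved by a single irreducible block of genus $g$ carrying the pseudoknot, plus the standard shape inflation. So the extremal configuration places the unique $I$-type pseudoknot inside one irreducible block that realizes the full genus $g$. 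Then the question becomes: among irreducible shadows of genus $g$ that contain an $I$-type pseudoknot as a sub-pseudoknot, what is the maximal number of arcs, and how does inflation to a shape add arcs?

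Next I would use the bivariate data ${\bf I}^I_g(x,y)$ together with Corollary~\ref{C:shape}. The recursion~(\ref{Eq:shaperec}) expresses ${\bf S}^I_g(x,y)$ in terms of the ${\bf I}^I_j$ and lower ${\bf S}^I_i$; extracting the coefficient of $y^{\ge 1}$ (the part of the shape polynomial recording shapes with at least one marked pseudoknot) and tracking the degree in $x$ through the rational substitution $\frac{x\,(\cdots)^2}{1-x((\cdots)^2-1)}$ gives the degree bound. For $g=1$ this is immediate from the explicit polynomials: ${\bf S}^H_1(x,y)$ has $y$-degree-$1$ part $x^2(1+x)y$, of $x$-degree $3 = 6\cdot1-3$; ${\bf I}^K_1 = {\bf I}^L_1$ has its $y$-marked term $x^3 y$, inflating to $x$-degree $4 = 6\cdot1-2$; and ${\bf I}^M_1$ has $y$-marked term $x^4 y$, giving $x$-degree $5 = 6\cdot1-1$. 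The general bound $6g-3$ (resp. $6g-2,6g-2,6g-1$) then follows by induction on $g$ using~(\ref{Eq:shaperec}): the term $x\sum_i {\bf S}^I_i {\bf S}^I_{g-i}$ contributes degree at most $1 + (6i-1) + (6(g-i)+\epsilon)$ to the $y$-marked part where $\epsilon$ is the appropriate offset for whichever factor carries the pseudoknot, and the ${\bf I}^I_j$ term is handled by the offset inherent in the position of the $y$-monomial in ${\bf I}^I_j$ versus the full degree $6j-2$ of ${\bf I}_j$. Sharpness is obtained by exhibiting explicit extremal shapes: take the $g=1$ extremal shape for type $I$ and iterate the maximal-degree branch of the recursion $g$ times (nesting a fresh maximal genus-$1$ shape inside each step), which by construction lands on the claimed degree and has the $y$-monomial present.

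The main obstacle I anticipate is the precise bookkeeping of how many arcs the inflation from an irreducible shadow to an irreducible block of a shape can add while preserving the ``every stack has length one'' constraint of a shape together with the ``no $1$-arcs'' constraint — in particular nailing down the exact offsets ($6g-3$ vs $6g-2$ vs $6g-1$) requires understanding, for each of the four genus-$1$ pseudoknot types, how the marked arc sits relative to the boundary of the block and whether a rainbow can be added around it. This is where the distinction between $H$ (interior, tightly constrained, loses $3$) and $M$ (can sit as a rainbow-adjacent configuration, loses only $1$) comes from, and it must be checked against the explicit low-genus polynomials listed in the Supplementary Material before the induction can be trusted.
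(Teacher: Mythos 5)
The operative part of your proposal --- verifying the $g=1$ base case from the explicit polynomials and then inducting on $g$ via the recursion~(\ref{Eq:shaperec}), tracking the $x$-degree of the terms with a positive exponent of $y$ --- is exactly the paper's proof, which is stated in one sentence immediately after the corollary. Your discussion of where the $y$-marked monomial sits inside ${\bf I}^I_1(x,y)$ (and the fact that ${\bf I}^I_j(x,y)={\bf I}_j(x)$ carries no $y$ for $j\ge 2$, so higher-genus irreducible shadows can only contribute $y$-positive terms through the inner ${\bf S}^I_k$ factors) is the right bookkeeping, and your sharpness argument by iterating the maximal-degree branch of the recursion is also in the spirit of what the paper intends.

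However, your preliminary ``reduction to a single irreducible block'' is both unnecessary and based on a false premise. You assert that in a shape ``each arc can be doubled into a $2$-stack --- the maximal stack length allowed in a shape,'' giving $2m+1$ arcs per inflated shadow; but by definition a shape is a linear chord diagram in which \emph{every stack has length one}, so no doubling is possible. The degree gap between the irreducible-shadow bound $6g-2$ and the shape bound $6g-1$ comes from the $(x+1)$ prefactor in eq.~(\ref{Eq:SIrelation}) (the optional rainbow) together with the noncrossing arcs generated by the $x\left({\bf S}^I-1\right)^2$ term, not from stack inflation; you appear to have conflated this with the $(1+x)^{n+1}$ factor relating $\kappa_g(n)$ to $\mathbf{S}_g(x)$ in Theorem~\ref{T:s}. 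Fortunately this paragraph can simply be deleted: the recursion~(\ref{Eq:shaperec}) already enumerates \emph{all} shapes of genus $g$, with genus and pseudoknot count additive over the decomposition, so no reduction to a single-block configuration is needed before running the induction.
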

The result clearly holds for shapes of genus one and an inductive argument utilizing eq.~(\ref{Eq:shaperec}) for the terms with a positive
exponent of $y$, yields this result.


For fixed $\lambda$ and $r$, we are now in position to analyze the random variable $\mathbb{X}^{I}_{g,n}$, counting the numbers of $I$-type
pseudoknots in RNA  structures of genus $g$, where $I\in \{H,K,L,M\}$. By construction we have $\mathbb{P}(\mathbb{X}^I_{g,n}=l)=\frac{d^I_{g}(n,l)}{d_{g}(n)}$,
where $l=0,1,\ldots, g $.

Next we compute the expectation of the number of these pseudoknots in genus $g$ structures. 


\begin{theorem}\label{T:loops}
	For any $g\geq 1$, the expectation of H-, K-, L- and M-type pseudoknots in uniformly generated  structures of genus $g$ is $ O\left(n^{-1}\right)$, $ O(n^{-\frac{1}{2}})$, $ O(n^{-\frac{1}{2}})$ and $ O(1)$, respectively.
\end{theorem}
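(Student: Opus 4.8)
<br />

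The plan is to write the expectation as a ratio of two coefficient asymptotics and evaluate both by singularity analysis. By construction
\[
\mathbb{E}(\mathbb{X}^I_{g,n})=\frac{[x^n]\,\partial_y \mathbf{D}^I_g(x,y)\big|_{y=1}}{[x^n]\,\mathbf{D}_g(x,1)},
\]
and the denominator is already given by Theorem~\ref{T:gasy} at $y=1$, namely $[x^n]\mathbf{D}_g(x,1)\sim c_g(1)\,n^{(6g-3)/2}\rho^{-n}$ with $\rho:=\rho(1)$; so everything reduces to the numerator.

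First I would record the inflation identity for the numerator. Exactly as in the proof of Theorem~\ref{T:Dg}, the projection of a structure onto its shape is genus-preserving and carries the $I$-type irreducible blocks of the structure bijectively onto the $I$-type irreducible shadows of its shape, so marking $I$-type pseudoknots commutes with the inflation and
\[
\mathbf{D}^I_g(x,y)=\mathbf{D}_0(x,1)\,\mathbf{S}^I_g\!\big(u(x),y\big),\qquad u(x)=\frac{(x^2)^r \mathbf{D}_0(x,1)^2}{1-x^2-(x^2)^r(\mathbf{D}_0(x,1)^2-1)},
\]
where $u$ is the substitution of eq.~(\ref{Eq:Dg}) specialized to $y=1$ (the ambient secondary-structure material and the inflated stacks carry no pseudoknots). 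Differentiating in $y$, setting $y=1$, and using $\mathbf{S}^I_g(x,1)=\mathbf{S}_g(x)$ yields $\partial_y \mathbf{D}^I_g(x,y)\big|_{y=1}=\mathbf{D}_0(x,1)\,Q_I(u(x))$, where $Q_I(w):=\partial_y \mathbf{S}^I_g(w,1)=\sum_n\big(\sum_{l\ge1} l\,s^I_g(n,l)\big)w^n$ is a polynomial with nonnegative coefficients. By Corollary~\ref{C:shape2}, $Q_I$ has degree exactly $D_I$ and positive leading coefficient $q_I$, where $D_H=6g-3$, $D_K=D_L=6g-2$, $D_M=6g-1$.

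Next I would run the singular analysis near $\rho$. The key point is that $u(x)$ blows up at $\rho$ with exponent $-\tfrac12$: by eq.~(\ref{Eq:Dg}) one has $\mathbf{D}_g(x,1)=\mathbf{D}_0(x,1)\,\mathbf{S}_g(u(x))$ with $\mathbf{D}_0(x,1)$ bounded near $\rho$ (Theorem~\ref{T:asyexp}) and $\mathbf{S}_g$ a polynomial of degree $6g-1$ with positive leading coefficient, so comparison with the singular expansion $\mathbf{D}_g(x,1)\sim\delta_g(1)(\rho-x)^{-(6g-1)/2}$ of Theorem~\ref{T:gasy} forces $u(x)\sim c\,(\rho-x)^{-1/2}$ for some $c>0$; this is the critical composition noted in the remark after Theorem~\ref{T:gasy}. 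Substituting into $Q_I$ and keeping the dominant term gives $\partial_y \mathbf{D}^I_g(x,y)\big|_{y=1}=\pi\,q_I\,c^{D_I}(\rho-x)^{-D_I/2}+O\big((\rho-x)^{-(D_I-1)/2}\big)$ with $\pi:=\lim_{x\to\rho^-}\mathbf{D}_0(x,1)>0$. Since $\rho$ is the unique dominant singularity (inherited from $\mathbf{D}_0$), the transfer theorem gives $[x^n]\,\partial_y \mathbf{D}^I_g(x,y)\big|_{y=1}\sim \tilde c_I\,n^{D_I/2-1}\rho^{-n}$ with $\tilde c_I>0$, whence
\[
\mathbb{E}(\mathbb{X}^I_{g,n})\sim \frac{\tilde c_I}{c_g(1)}\,n^{(D_I-(6g-1))/2},
\]
which is of order $n^{-1}$ for $I=H$, $n^{-1/2}$ for $I\in\{K,L\}$, and $1$ for $I=M$, as claimed.

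The hard part will be pinning the exponent exactly, for which Corollary~\ref{C:shape2} is indispensable: its degree bound keeps $\deg Q_I$, hence the exponent, from being larger, while its sharpness guarantees $q_I\neq0$ so that the exponent is not smaller. The remaining care goes into the singular analysis, i.e.~that the polynomial composition $Q_I(u(x))$ inherits exactly the $(\rho-x)^{-1/2}$ blow-up of $u$ (which holds because $\mathbf{S}_g$ has full degree $6g-1$ and $\delta_g(1)\neq0$) and that the transfer theorem applies in a suitable $\Delta$-domain around $\rho$; the only case distinction is that for $I\in\{K,L\}$ the singularity $(\rho-x)^{-(3g-1)}$ is a genuine pole rather than a branch point, which only makes the estimate easier.
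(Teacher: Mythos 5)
Your proposal is correct and follows essentially the same route as the paper: the expectation as a ratio of coefficients, the inflation identity $\mathbf{D}_g^{I}(x,y)=\mathbf{D}_0(x)\,\mathbf{S}_g^{I}(u(x),y)$, the degree bounds of Corollary~\ref{C:shape2} for $\partial_y\mathbf{S}_g^{I}(x,y)|_{y=1}$, and the $(\rho-x)^{-1/2}$ blow-up of the substituted variable from the proof of Theorem~\ref{T:gasy} to convert polynomial degree into the subexponential factor. The only cosmetic difference is that you infer $u(x)\sim c\,(\rho-x)^{-1/2}$ backwards from the singular expansion of $\mathbf{D}_g(x,1)$ rather than from the direct computation of the denominator carried out in the proof of Theorem~\ref{T:gasy}, which changes nothing.
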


\begin{corollary}\label{C:loops}
	In particular, for $g=1$, $\lambda=1$ and $r=1$, we have
	\begin{align*}
		\mathbb{P}(\mathbb{X}^H_{1,n} =1) &= \frac{288+O\left(n^{-1}\right)}{16 n-51+O\left(n^{-1}\right)},\\	
		\mathbb{P}(\mathbb{X}^K_{1,n} =1)=\mathbb{P}(\mathbb{X}^L_{1,n} =1) &= \frac{24 \left(\sqrt{3 \pi } \sqrt{n}-18\right)+O\left(n^{-1}\right)}{16
			n-51+O\left(n^{-1}\right)},\\
		\mathbb{P}(\mathbb{X}^M_{1,n} =1) &= \frac{16 n-48 \sqrt{3 \pi } \sqrt{n}+525+O\left(n^{-1}\right)}{16 n-51+O\left(n^{-1}\right)}.	
	\end{align*}	
\end{corollary}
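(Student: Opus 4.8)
The plan is to reduce $d_1^I(n,1)$ and $d_1(n)$ to coefficients of explicit algebraic generating functions, then finish with singularity analysis. Since a structure of genus $g=1$ has a single non-trivial arc-component, its block decomposition contains exactly one irreducible block, and this is of exactly one of the four types; hence $\mathbb{X}^I_{1,n}\in\{0,1\}$, the four events are mutually exclusive with $\sum_I\mathbb{X}^I_{1,n}=1$, so $\mathbb{P}(\mathbb{X}^I_{1,n}=1)=d_1^I(n,1)/d_1(n)$ and $\sum_I\mathbb{P}(\mathbb{X}^I_{1,n}=1)=1$. The inflation from shapes underlying Theorem~\ref{T:Dg}, now carrying the refined polynomial $\mathbf{S}_1^I(x,y)$ in place of $\mathbf{S}_1$ (the pseudoknots sit entirely in the shadow part and are unaffected by the inflation), gives $\mathbf{D}_1^I(x,y)=\mathbf{D}_0(x,1)\,\mathbf{S}_1^I\!\bigl(u(x),y\bigr)$, where $u(x)$ is the inner function of Theorem~\ref{T:Dg} with the arc-variable set to $1$; for $r=1$ this is $u(x)=x^2\mathbf{D}_0(x,1)^2/(1-x^2\mathbf{D}_0(x,1)^2)$. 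By Corollary~\ref{C:shape} at genus one, $\mathbf{S}_1^I(x,y)=(1+x)\,\mathbf{I}_1^I(x,y)$, so $[y^1]\mathbf{S}_1^H=x^2(1+x)$, $[y^1]\mathbf{S}_1^K=[y^1]\mathbf{S}_1^L=x^3(1+x)$, $[y^1]\mathbf{S}_1^M=x^4(1+x)$, whence $[y^1]\mathbf{D}_1^H=\mathbf{D}_0\,u^2(1+u)$, $[y^1]\mathbf{D}_1^K=[y^1]\mathbf{D}_1^L=\mathbf{D}_0\,u^3(1+u)$, $[y^1]\mathbf{D}_1^M=\mathbf{D}_0\,u^4(1+u)$, and $\mathbf{D}_1(x,1)=\mathbf{D}_0\,u^2(1+u)^3$ (consistent with $\sum_I[y^1]\mathbf{S}_1^I=\mathbf{S}_1$).

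For $\lambda=r=1$, Theorem~\ref{T:arcgf} gives $\mathbf{A}(x,1)=1$ and $\mathbf{B}(x,1)=1-x$, so $\mathbf{D}_0(x,1)=\bigl((1-x)-R\bigr)/(2x^2)=2/\bigl((1-x)+R\bigr)$ with $R:=\sqrt{(1-x)^2-4x^2}=\sqrt{(1-3x)(1+x)}$, the Motzkin generating function, whose dominant singularity is $\rho=1/3$. From the functional equation $x^2\mathbf{D}_0^2=(1-x)\mathbf{D}_0-1$ one gets the identities $1-x^2\mathbf{D}_0^2=R\,\mathbf{D}_0$, $u=x^2\mathbf{D}_0/R$ and $1+u=1/(R\,\mathbf{D}_0)$, which collapse the above to closed form:
\[
[y^1]\mathbf{D}_1^H=\frac{x^4\mathbf{D}_0^2}{R^3},\qquad
[y^1]\mathbf{D}_1^K=[y^1]\mathbf{D}_1^L=\frac{x^6\mathbf{D}_0^3}{R^4},\qquad
[y^1]\mathbf{D}_1^M=\frac{x^8\mathbf{D}_0^4}{R^5},\qquad
\mathbf{D}_1(x,1)=\frac{x^4}{R^5}.
\]
Using $2x^2\mathbf{D}_0=(1-x)-R$ and $R^2=(1-3x)(1+x)$, I would split each of these into a \emph{rational} part (even powers of $R$) plus $R$ times a rational part; for instance $[y^1]\mathbf{D}_1^H=(1-2x-x^2)/(2R^3)-(1-x)/(2R^2)$. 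In each case the rational part contributes a pole at $x=1/3$ (an exactly computable $3^n$- or $n\,3^n$-term), the surd part a branch point $(\rho-x)^{-k/2}$ at $\rho=1/3$, and the singularity at $x=-1$ is subdominant.

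The remaining step is singularity analysis at $\rho=1/3$: expand $\mathbf{D}_0(x,1)$, $R$ and $R^{-1}$ in powers of $(\rho-x)^{1/2}$, substitute, read off the Puiseux expansions of the five functions (the dominant exponents being $-\tfrac32,-2,-2,-\tfrac52$ for $[y^1]\mathbf{D}_1^H,[y^1]\mathbf{D}_1^K,[y^1]\mathbf{D}_1^L,[y^1]\mathbf{D}_1^M$ and $-\tfrac52$ for $\mathbf{D}_1(x,1)$) to the first few orders, and transfer. One records the cancellations: $\mathbf{D}_1(x,1)=x^4R^{-5}$ has only half-integer singular exponents, so $d_1(n)$ has no $n\,3^n$ term and $d_1(n)=\tfrac{\sqrt3}{3456\sqrt\pi}\,3^n n^{1/2}\bigl(16n-51+O(n^{-1})\bigr)$, which furnishes the common denominator $16n-51$; dividing the asymptotics of $d_1^I(n,1)$ by this and collecting terms gives the claimed expansions, while $\sum_I[y^1]\mathbf{D}_1^I(x,y)=\mathbf{D}_1(x,1)$ returns $\sum_I\mathbb{P}(\mathbb{X}^I_{1,n}=1)=1$ as a check. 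The delicate point --- the one spot where this is more than a routine transfer --- is that by the critical composition of Theorem~\ref{T:gasy}, $d_1^I(n,1)$ and $d_1(n)$ agree only to leading order, so the probabilities hinge on subleading terms; carrying the half-integer Puiseux expansions far enough, and keeping the exact pole contributions ($3^n$- and $n\,3^n$-terms, and the exponentially negligible $(-1)^n$-terms) separate --- most delicately for the $M$-type, whose numerator runs to three orders --- is the bookkeeping that must be done carefully.
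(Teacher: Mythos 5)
Your proposal is correct and follows essentially the same route as the paper: the inflation identity $\mathbf{D}_1^{I}(x,y)=\mathbf{D}_0(x)\,\mathbf{S}_1^{I}(u(x),y)$, extraction of the coefficient of $y$ (equivalently $\partial_y|_{y=1}$), singular expansion at $\rho=\tfrac13$ and the transfer theorem, followed by division by the asymptotics of $\mathbf{D}_1(x,1)$. Your explicit closed forms $x^4\mathbf{D}_0^2/R^3$, $x^6\mathbf{D}_0^3/R^4$, $x^8\mathbf{D}_0^4/R^5$ and $\mathbf{D}_1(x,1)=x^4/R^5$, together with the check $\sum_I\mathbb{P}(\mathbb{X}^I_{1,n}=1)=1$, are a welcome sharpening of the paper's computation, which only records the resulting Puiseux expansions.
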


{\bf Remark:} this approach allows to obtain the statistics for any irreducible shadow.


\section{Discussion}\label{S:Dis}

%
The backbone of this paper is a novel bivariate generating function, that is closely related to the
shape polynomial. Our approach makes evident which properties of RNA structures are not affected by
increasing their complexity, stipulating that complexity is tantamount to the topological genus of
the surface needed to embed them without crossings.

The singularity analysis of this generating function in Section~\ref{S:pf} allows to obtain analytically
a plethora of limit distributions and we put these into context with data from uniformly sampled
topological structures.

One class of results is centered around the fact that topological genus does only enter the subexponential
factors of the singular expansion. It implies that all basic loop types appear with the same frequencies as
in RNA secondary structures, i.e.~topological structures of genus zero.
In Fig.~\ref{F:uniform}, we present the distribution of the number of arcs, obtained by uniformly
sampling $10^5$ structures over $100$ nucleotides of genus $0$, $1$ and $2$~\citep{Huang:13},
respectively. As predicted by Theorem~\ref{T:arcclt} the distributions are not affected by genus. 
This finding holds also for any other loop as shown via Theorem~\ref{T:arccltall}.

\begin{figure}
	\centering
	\includegraphics[width=0.8\textwidth]{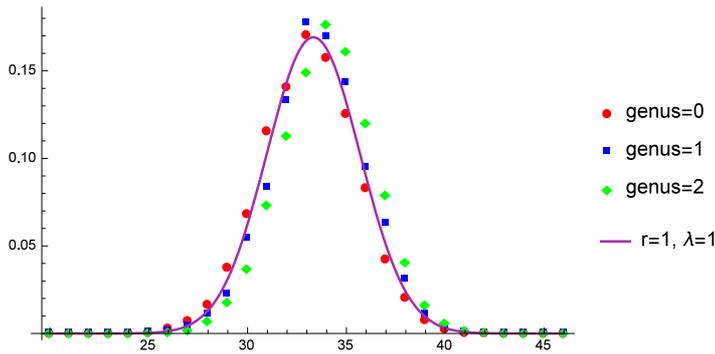}
	\caption
	    {\small The number of arcs in RNA structures: we contrast the limit distribution of RNA structures
              having both minimum arc-length and stack-length one (solid lines) with the distribution of the
              uniformly sampled structures of length $100$ having genus $0$, $1$ and $2$~\citep{Huang:13}.
	}
	\label{F:uniform}
\end{figure}

Furthermore our singular expansion shows that the exponential growth rate is affected by minimum arc-length and stack-length
in an interesting way: for fixed genus, $g$, canonical structures of genus $g$ contain on average {\it more} arcs than arbitrary
structures of genus $g$, but {\it fewer} other structural features, such as stacks, hairpins, bulges, interior loops and multi-loops.


The second class of results is centered around enhancing the singular expansions in such a way that they can ``detect''
novel features. We chose to consider here features previously studied, i.e.~H-type pseudoknots, kissing hairpins and 3-knots. In Section~\ref{S:ploop} we give a detailed analysis that can easily be tailored to detect any other feature.

For the above mentioned features we contrast Theorem~\ref{T:loops} with data obtained from a uniform sample of $10^5$
structures over $500$ nucleotides of genus $1$, see Table~\ref{Tab:loops}. The table shows that the predicted expectation
values in the limit of long sequences are consistent with the data obtained from the uniform sampling.
This insight into features of random structures allows to evaluate the data on RNA pk-structures contained in data bases
such as Nucleic Acid Database (NDB)~\citep{Berman:92,Narayanan:14}. Namely, irrespective of genus, we find in data bases a dominance of H-types, while kissing hairpins and 3-knots are
rather rare. This is quite the opposite to what happens in random structures and suggests that H-types have distinct energetic
advantages. Of course this assumes that H-types have been correctly identified and no additional bonds have been missed.

In Fig.~\ref{F:p-Loop3}, we display the expectation of H-types, kissing hairpins, $3$-knots and $4$-knots in uniformly generated
RNA structures of genus $2$ together with our theoretical estimates derived in Theorem~\ref{T:loops}.
As predicted by Theorem~\ref{T:loops}, H-types, kissing hairpins and $3$-knots appear at the rate $O\left(n^{-1}\right)$ and
$O(n^{-\frac{1}{2}})$, respectively. Only $4$-knots exhibit a rate independent of sequence length.
These findings are in line with Corollary~\ref{C:shape2},  since only shapes containing a $4$-knot type can have the
maximum number of arcs and shapes containing H-types, kissing hairpins or $3$-knots lack at least one arc.
In addition, close inspection of the proof of Theorem~\ref{T:SIrelation} allows to construct all shapes containing a specific
feature.

\begin{table}
	\caption{The number of different types of pseudoknots in RNA structures: we contrast our
	  estimates in Theorem~\ref{T:loops} and the exact enumerations in a uniform sampling
          of  $10^5$ structures of length $500$ having genus $1$~\citep{Huang:13}.} \label{Tab:loops}
	\begin{tabular}{ccccc}
		\hline\noalign{\smallskip}
		\small   &\small H-type  &\small kissing hairpin  &\small $3$-knot & \small $4$-knot     \\
		\noalign{\smallskip}\hline\noalign{\smallskip}
		\small\text{sample}   &\small$331$              & \small$1563$ & \small $1587$ &\small $6519$	 \\
		\small\text{estimate}   &\small$362.3$               & \small$1529.2$ & \small $1529.2$ &\small $6579.4$ \\
		\noalign{\smallskip}\hline
	\end{tabular}
\end{table}

\begin{figure}
	\centering
	\includegraphics[width=.9\textwidth]{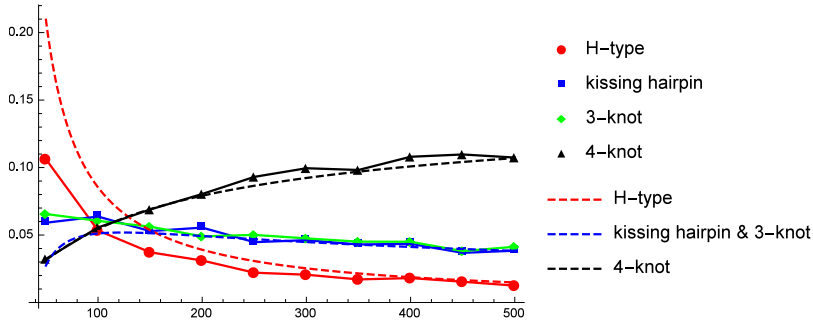}
	\caption
	    {\small The expectation (solid) and theoretical estimates (dashed) of H-types, kissing hairpins, $3$-knots
              and $4$-knots in uniformly generated RNA structures of genus $2$ as a function of sequence length $n$. The dashed curves are analytically computed along the lines of Corollary~\ref{C:loops}.
 	}
	\label{F:p-Loop3}
\end{figure}

There is only very limited experimental data on the energy of RNA pseudoknots. The established notion of
how to compute energies for hairpin-, interior- and multi-loops combined with our notion of irreducible
shadows as building blocks, allows to derive such energies in topological structures.
The energy model for pseudoknots can then be developed in an analogous way and this is work in progress and will be
reported elsewhere.


\section{Proofs}\label{S:pf}

%

\begin{proof}[Proof of Theorem~\ref{T:Dg}]
	Let $\mathcal{D}_{g}(n,l)$ denote the collection of $r$-canonical topological RNA structures of $n$ vertices, $l$ arcs and genus $g$, with minimum arc-length $\lambda$. Let furthermore $\mathcal{S}_{g}(k)$ denote the collection of shapes of $k$ arcs and genus $g$. There is a natural projection $\varphi$ from RNA structures to shapes defined by first removing all secondary structures and then collapsing each stack into a single arc
	\[
	\varphi: \; \cup_{n\geq1} \cup_{l\geq 0} \mathcal{D}_{g}(n,l) \rightarrow  \cup_{k\geq1} \mathcal{S}_{g}(k).
	\]
	It is clear that $\varphi$ is surjective and preserves genus. For any shape $\omega\in\mathcal{S}_{g}(k) $, let
	\[
	\mathcal{D}_{g}^{\omega}(n,l) = \mathcal{D}_{g}(n,l) \cap \varphi^{-1}(\omega)
	\]
	denote the subset of the fiber $\varphi^{-1}(\omega)$ and let  $\mathbf{D}_{g}^{\omega}(x,y)$ be its generating function.
	
	We first prove that for any shape $\omega$ of genus $g$ and $k$ arcs
	\begin{equation}\label{Eq:omega}
	\mathbf{D}_{g}^{\omega}(x,y)=\Bigg(\frac{(x^2 y)^r }{1-x^2 y-(x^2 y)^r (\mathbf{D}_{0}(x,y)^2-1)}\Bigg)^k \,\mathbf{D}_{0}(x,y)^{2k+1}.
	\end{equation}
	
	We adopt the following notations from symbolic enumeration~\citep{Flajolet:07a}. Let $=$ denote set-theoretic bijection, $+$ disjoint union, $\times$ Cartesian product, $\mathcal{I}$ the collection containing only one element of size $0$, and the sequence construction $\textsc{Seq}(\mathcal{A}):={\mathcal{I}}+\mathcal{A}+(\mathcal{A}\times\mathcal{A})+
	(\mathcal{A}\times\mathcal{A}\times\mathcal{A})+\cdots$ for any collection $\mathcal{A}$.
	
	We shall construct $\cup_{n\geq1} \cup_{l\geq 0} \mathcal{D}_{g}^{\omega}(n,l)$ via the inflation from a shape $\omega$ using simple combinatorial building blocks such as arcs $\mathcal{R}$, stacks	$\mathcal{K}$, induced	stacks $\mathcal{N}$, stems $\mathcal{M}$, and secondary structures $\mathcal{D}_{0}$. We inflate the shape $ \omega\in\mathcal{S}_{g}(k)$ to a $\mathcal{D}_{g}^{\omega}$-structure in
	two steps.
	
	{\bf Step I:} we inflate any arc in $\omega$ to a stack of
	length at least $r$ and subsequently add additional stacks. The
	latter are called induced stacks and have to be separated by means
	of inserting secondary structures, see Fig.~\ref{F:inflate}.
	\begin{figure}
		\centering
		\includegraphics[width=1\textwidth]{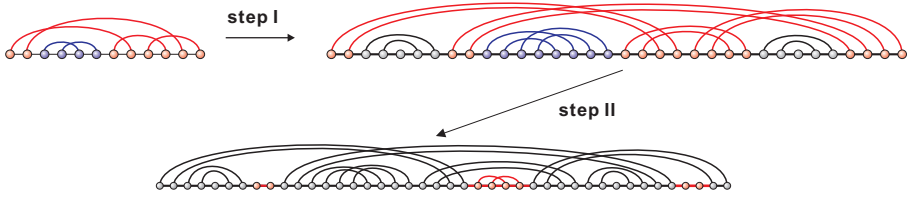}
		\caption
		{\small From a shape to a structure. Step I: inflation of each arc to  a stem. Step II: insertion of additional secondary structures.
		}
		\label{F:inflate}
	\end{figure}
	
	Note that during this first inflation step no secondary structures,
	other than those necessary for separating the nested stacks are
	inserted. We generate
	\begin{itemize}
		\item secondary structures $\mathcal{D}_{0}$ with stack-length $\geq r$ and arc-length $\geq \lambda$
		having the generating function $\mathbf{D}_{0}(x,y)$,
		\item  arcs $\mathcal{R}$ with generating function ${\bf R}(x,y)=x^2 y$,
		\item stacks, i.e.~pairs consisting of the minimal sequence of arcs
		$\mathcal{R}^r$ and an arbitrary extension consisting of
		arcs of arbitrary finite length
		$\mathcal{K}=\mathcal{R}^{r}\times\textsc{Seq}\left(\mathcal{R}\right)$
		having the generating function
		\begin{equation*}
		\mathbf{K}(x,y)  = \frac{ (x^2 y)^{r}}{1-x^2 y},
		\end{equation*}
		\item induced stacks, i.e.~stacks together with at least one secondary structure on
		either or both of its sides,
		$\mathcal{N}=\mathcal{K}\times\left( \mathcal{D}_{0}^2 -1 \right),$
		having the generating function
		\begin{equation*}
		\mathbf{N}(x,y)=\frac{ (x^2 y)^{r}}{1-x^2 y} \left( \mathbf{D}_{0}(x,y)^2-1\right),
		\end{equation*}
		\item stems, that is pairs consisting of stacks $\mathcal{K}$
		and an arbitrarily long sequence of induced stacks
		$		\mathcal{M}=\mathcal{K} \times \textsc{Seq}\left(\mathcal{N}\right), $
		having the generating function
		\begin{align*}
		\mathbf{M}(x,y)&=\frac{\mathbf{K}(x,y)}{1-\mathbf{N}(x,y)}\\
		&=\frac{\frac{ (x^2 y)^{r}}{1-x^2 y}}
		{1-\frac{ (x^2 y)^{r}}{1-x^2 y} \left( \mathbf{D}_{0}(x,y)^2-1\right)}\\
		&=\frac{(x^2 y)^r }{1-x^2 y-(x^2 y)^r (\mathbf{D}_{0}(x,y)^2-1)}.
		\end{align*}
	\end{itemize}
	By inflating each arc into a stem, we have the corresponding generating function
	\begin{equation*}
	\mathbf{M}(x,y)^{k}=\Bigg(\frac{(x^2 y)^r }{1-x^2 y-(x^2 y)^r (\mathbf{D}_{0}(x,y)^2-1)}\Bigg)^k.
	\end{equation*}
	
	{\bf Step II:} here we insert additional secondary structures at the
	remaining $2 k+1$ possible locations, see Fig.~\ref{F:inflate}.
	Formally, these insertions are expressed via the combinatorial class
	$\mathcal{D}_{0}^{2 k+1}$
	whose generating function is $\mathbf{D}_{0}(x,y)^{2 k+1}.$
	
	Notice that any stack, from either  inflation of arcs or  inserted secondary structures, has length at least $r$. Any non-crossing arc has length at least $\lambda$ since it must be located in some inserted secondary structure with arc-length $\geq \lambda$. The key point here is that the restriction $\lambda \leq r+1$ guarantees that any crossing arc has after inflation  a minimum arc-length of $r+1\geq \lambda$. Therefore, combining Step I and Step II, we create a $\mathcal{D}_{g}^{\omega}$-structure with stack-length $\geq r$ and arc-length $\geq \lambda$,  and  arrive at
	\[	\cup_{n\geq1} \cup_{l\geq 0} \mathcal{D}_{g}^{\omega}(n,l)=\mathcal{M}^k\times \mathcal{D}_{0}^{2 k+1}.
	\]
	Accordingly	
	\begin{align*}
	\mathbf{D}_{g}^{\omega}(x,y)&=\mathbf{M}(x,y)^{k}\mathbf{D}_{0}(x,y)^{2 k+1}\\
	&=\Bigg(\frac{(x^2 y)^r }{1-x^2 y-(x^2 y)^r (\mathbf{D}_{0}(x,y)^2-1)}\Bigg)^k \,\mathbf{D}_{0}(x,y)^{2k+1},
	\end{align*}
	whence eq.~(\ref{Eq:omega}).
	
	Now we are in position to prove eq.~(\ref{Eq:Dg}). Since each $ \mathcal{D}_{g}(n,l)$-structure projects, via $\varphi$,  to a unique shape $\omega$ of genus $g$ and $k$ arcs, we have
	\[
	\cup_{n\geq1} \cup_{l\geq 0} \mathcal{D}_{g}(n,l)= \cup_{n\geq1} \cup_{l\geq 0} \cup_{\omega} \mathcal{D}_{g}^{\omega}(n,l),
	\]
	i.e. we have
	\[
	\mathbf{D}_{g}(x,y)=\sum_{\omega}\mathbf{D}_{g}^{\omega}(x,y).
	\]
	According to eq.~(\ref{Eq:omega}), $\mathbf{D}_{g}^{\omega}(x,y)$ only depends on the number of arcs in the shape $\omega$, and  we can therefore derive
	\begin{align*}
	\mathbf{D}_{g}(x,y)&=\sum_{k\geq 1} \sum_{\omega\in\mathcal{S}_{g}(k) }\mathbf{D}_{g}^{\omega}(x,y)\\
	&=\sum_{k\geq 1} s_g(k)\mathbf{D}_{g}^{\omega}(x,y)\\
	&=\sum_{k\geq 1} s_g(k) \Bigg(\frac{(x^2 y)^r }{1-x^2 y-(x^2 y)^r (\mathbf{D}_{0}(x,y)^2-1)}\Bigg)^k \,\mathbf{D}_{0}(x,y)^{2k+1}\\
	&=\mathbf{D}_{0}(x,y)\sum_{k\geq 1} s_g(k) \Bigg(\frac{(x^2 y)^r \mathbf{D}_{0}(x,y)^2 }{1-x^2 y-(x^2 y)^r (\mathbf{D}_{0}(x,y)^2-1)}\Bigg)^k\\
	&=\mathbf{D}_{0}(x,y) \mathbf{S}_{g}\Big(\frac{(x^2 y)^r \mathbf{D}_{0}(x,y)^2}{1-x^2 y-(x^2 y)^r (\mathbf{D}_{0}(x,y)^2-1)}\Big).
	\end{align*}
\end{proof}
\begin{proof}[Proof of Theorem~\ref{T:gasy}]
	To prove eq.~(\ref{Eq:gasy}), we first determine the location of the dominant singularity of $\mathbf{D}_g (x,y)$.  Set
	\[
	h(x,y,z)=	\frac{(x^2 y)^r z^2}{1-x^2 y-(x^2 y)^r (z^2-1)}.
	\]
	In view of eq.~(\ref{Eq:Dg}) of Theorem~\ref{T:Dg}, we have
	\begin{equation}\label{Eq:Dg2}
	\mathbf{D}_g (x,y)=\mathbf{D}_{0}(x,y) \mathbf{S}_{g}(h(x,y,\mathbf{D}_0 (x,y))).
	\end{equation}
	Since $\mathbf{S}_{g}(x)$ is a polynomial in $x$, the singularities of $\mathbf{D}_g (x,y)$ are those of $\mathbf{D}_0(x,y)$ and  those of $h(x,y,\mathbf{D}_0 (x,y))$. By Theorem~\ref{T:asyexp}, we know that the dominant singularity of $\mathbf{D}_0(x,y)$ is given by $\rho (y)$, the minimal positive, real solution of
	\[
	\mathbf{B} (x,y)^2 -4 (x^2 y)^r \mathbf{A}(x,y)=0,
	\]
	for $y$ in a neighborhood of $1$. For the denominator of $h(x,y,\mathbf{D}_0 (x,y))$, we compute
	\begin{equation}\label{Eq:dh}
	\begin{aligned}
	&1-x^2 y-(x^2 y)^r (\mathbf{D}_0 (x,y)^2-1) \\
	=  &\mathbf{A}(x,y)-(x^2 y)^r \mathbf{D}_0 (x,y)^2\\
	=  &\frac{4 (x^2 y)^r\mathbf{A}(x,y)- \Big(\mathbf{B} (x,y)- \sqrt{\mathbf{B} (x,y)^2 -4 (x^2 y)^r \mathbf{A}(x,y)}\Big)^2}{4 (x^2 y)^r}  \\
	=  &\frac{\sqrt{\mathbf{B} (x,y)^2 -4 (x^2 y)^r \mathbf{A}(x,y) }\Big(\mathbf{B} (x,y)- \sqrt{\mathbf{B} (x,y)^2 -4 (x^2 y)^r \mathbf{A}(x,y)}\Big)}{2 (x^2 y)^r}  \\
	=  &\sqrt{\mathbf{B} (x,y)^2 -4 (x^2 y)^r \mathbf{A}(x,y) } \; \mathbf{D}_0 (x,y),
	\end{aligned}
	\end{equation}
	where the first equality uses the definition of $\mathbf{A}(x,y)$ in Theorem~\ref{T:arcgf}, and the second and fourth equalities follow from eq.~(\ref{Eq:arcunex}) of Theorem~\ref{T:arcgf}.
	The above computation implies that the dominant singularity of $h(x,y,\mathbf{D}_0 (x,y))$ is given by  $\rho (y)$. Consequently, $\mathbf{D}_g (x,y)$ has the unique dominant singularity $\rho (y)$.
	
	{\bf Claim 1.} There exists some  function $k(y)$ such that  for $x \rightarrow \rho(y)$, the singular expansion of $h(x,y,\mathbf{D}_0 (x,y))$ is given by
	\begin{equation}\label{Eq:h}
	h(x,y,\mathbf{D}_0 (x,y))= k(y)\, \left(\rho (y)-x\right)^{-\frac{1}{2}} \left(1+ o(1) \right),
	\end{equation}
	uniformly for  $y$ restricted to a neighborhood of $1$, where $k(y)$ is analytic for  $y$ in a neighborhood of $1$ and $k(1)\neq 0$.
	
	From the above computation, it is crucial to notice that, for $h(x,y,\mathbf{D}_0 (x,y))$ viewed as a composition of two functions $h(x,y,z)$ and  $\mathbf{D}_0 (x,y)$, we can use a phenomenon known as a confluence of singularities of the internal and external functions (the critical paradigm, see ~\citet{Flajolet:07a}). It basically means that the type of the singularity is determined by a mix of the types  of the internal and external functions. To prove  Claim 1,
	we first rewrite $h(x,y,\mathbf{D}_0 (x,y))$ using eq.~(\ref{Eq:dh})
	\begin{align*}
	h(x,y,\mathbf{D}_0 (x,y)) &= \frac{(x^2 y)^r \mathbf{D}_{0}(x,y)^2}{1-x^2 y-(x^2 y)^r (\mathbf{D}_{0}(x,y)^2-1)}\\
	&= \frac{(x^2 y)^r \mathbf{D}_{0}(x,y)}{\sqrt{\mathbf{B} (x,y)^2 -4 (x^2 y)^r \mathbf{A}(x,y) }}.
	\end{align*}
	Notice that $\sqrt{\mathbf{B} (x,y)^2 -4 (x^2 y)^r \mathbf{A}(x,y) }= \psi(x,y) \left(\rho (y)-x\right)^{\frac{1}{2}}  $, where $\psi(x,y)$ is analytic for $x$ in a neighborhood of $\rho(y)$ and $y$ in a neighborhood of $1$ such that $\psi(\rho(y),y)\neq 0$ for $y$ in a neighborhood of $1$. By setting $\Phi(x,y)= \frac{(x^2 y)^r }{\psi(x,y)}$, we have
	\[
	h(x,y,\mathbf{D}_0 (x,y))= \Phi(x,y) \mathbf{D}_{0}(x,y) \left(\rho (y)-x\right)^{-\frac{1}{2}}.
	\]
	Since $\psi(x,y)$ is analytic and nonzero and thus $\Phi(x,y)$ is analytic, the Taylor expansion of $\Phi(x,y)$ at $x=\rho (y)$ is given by
	\begin{equation}\label{Eq:Phi}
	\Phi(x,y)= a_0(y)+ a_1(y)\, \left(\rho (y)-x\right) \left(1+ o(1) \right),
	\end{equation}
	uniformly for  $y$ restricted to a neighborhood of $1$, where $a_0(y)$ and $a_1(y)$ are analytic for $y$ in a neighborhood of $1$ such that $a_0(1)\neq 0$.
	Combing eq.~(\ref{Eq:Phi}) and the uniform singular expansion~(\ref{Eq:0asy}) of $\mathbf{D}_0 (x,y)$ in Theorem~\ref{T:asyexp}, we derive
	\[
	h(x,y,\mathbf{D}_0 (x,y))= a_0(y)\pi(y) \left(\rho (y)-x\right)^{-\frac{1}{2}}  \left(1+ o(1) \right),
	\]
	uniformly for  $y$ restricted to a neighborhood of $1$.
	Setting $k(y)=a_0(y) \pi(y)$, we know $k(y)$  is analytic for $y$ in a neighborhood of $1$ and $k(1)=a_0(1) \pi(1)\neq 0$, whence  Claim 1 follows.
	
	Now we proceed by investigating the singular expansion of  $\mathbf{D}_g (x,y)$. Since $\mathbf{S}_{g}(x)$ is a polynomial of degree $6g-1$ and $h(x,y,\mathbf{D}_0 (x,y))$ has the uniform expansion~(\ref{Eq:h}),  we obtain
	\begin{equation*}
	\mathbf{S}_{g}(h(x,y,\mathbf{D}_0 (x,y)))= \kappa_{g}(3g-1)\, k(y)^{6g-1} \left(\rho (y)-x\right)^{-\frac{6g-1}{2}}  \left(1+ o(1) \right),
	\end{equation*}
	uniformly for  $y$ restricted to a neighborhood of $1$. Combing with eq.~(\ref{Eq:Dg2}) and the uniform singular expansion~(\ref{Eq:0asy}) of $\mathbf{D}_0 (x,y)$, we immediately derive
	\[
	\mathbf{D}_g (x,y)=\kappa_{g}(3g-1)\, \pi(y)k(y)^{6g-1} \, \left(\rho(y)-x \right)^{-\frac{6g-1}{2}} \left(1+ o(1) \right),
	\]
	uniformly for  $y$ restricted to a neighborhood of $1$.
	Set $\delta_g(y)=\kappa_{g}(3g-1) \pi(y)k(y)^{6g-1}$. It is clear that $\delta_g(y)$ is analytic and nonzero at $1$, whence  eq.~(\ref{Eq:gasy}) follows.
	
	A direct application of the standard transfer theorem to eq.~(\ref{Eq:gasy}) gives us the asymptotics~(\ref{Eq:coeffg}) of the coefficients $[x^{n} ] \mathbf{D}_g (x,y)$. The uniformity property follows from  the following estimation on the error term
	\[
	f(x,y):= o\left(\delta_g(y)\left(\rho(y)-x \right)^{-\frac{6g-1}{2}}    \right).
	\]	
	Based on the singular expansion of $\mathbf{D}_g (x,y)$, we can  assume that
	\[
	f(x,y)\leq K \delta_g(y)\left(\rho(y)-x \right)^{-(3g-1)},
	\]
	for some constant $K$. Since $\delta_g(y)$ is analytic at $1$, there exists a constant $\tilde{\delta}$ such that $|\delta_g(y)|<\tilde{\delta}$. Now we compute
	\begin{align*}
	|[x^{n} ] f(x,y)| &= \Big|\frac{1}{2  \pi\mathrm{i}} \int_{\Omega} f(x,y) \frac{\mathrm{d} x}{x^{n+1}}\Big|\\
	&\leq \frac{ K \tilde{\delta}}{2  \pi} \Big| \int_{\Omega} \left(\rho(y)-x \right)^{-(3g-1)} \frac{\mathrm{d} x}{x^{n+1}}\Big|\\
	&\leq \frac{ K \tilde{\delta}}{2  \pi} n^{(3g-2) } \big(\rho (y)
	\big)^{-n}\\
	&=o\Big(n^{\frac{6g-3}{2} } \big(\rho (y) \big)^{-n} \Big),
	\end{align*}
	where $\Omega$ is the Hankel contour around $\rho(y)$. The first equality follows from Cauchy's integral formula, and the third inequality is due to the estimation of the integral along each parts of the contour $\Omega$, the same as the proof of the  transfer theorem, see~\citet[pp. 390]{Flajolet:07a}, completing the proof.
\end{proof}
\begin{proof}[Sketch of the proof of Theorem~\ref{T:Dall}.]
	Using the approach of Theorem~\ref{T:Dg}, we prove the theorem via symbolic methods, considering a topological structure as the inflation of a shape $\omega$.
	As before, our construction utilizes simple combinatorial building blocks such as  stacks	$\mathcal{K}$, induced	stacks $\mathcal{N}$, stems $\mathcal{M}$, secondary structures $\mathcal{D}_{0}$, arcs $\mathcal{R}$, and unpaired vertices $\mathcal{X}$, where $\mathbf{X}(x)=x$ and $\mathbf{R}(x)=x^2$. Furthermore we let $y_1$, $y_2$, $y_3$, $y_4$, $y_5$ and $y_6$ denote the combinatorial markers for stacks, stems, hairpin loops, bulge loops, interior loops and multi-loops. Then
	\begin{align*}
	\mathcal{K}&=y_1\times \mathcal{R}^{r}\times\textsc{Seq}\left(\mathcal{R}\right),\\
	\mathcal{N}&=\mathcal{K}\times\Big( y_6 \times\big(\mathcal{D}_{0}^2 -1
	-2 \mathcal{X}\times\textsc{Seq}\left(\mathcal{X}\right)
	- \mathcal{X}^2\times\textsc{Seq}\left(\mathcal{X}\right)^2  \big) \\
	&\quad +y_4\times 2 \mathcal{X}\times\textsc{Seq}\left(\mathcal{X}\right)+ y_5\times\mathcal{X}^2\times\textsc{Seq}\left(\mathcal{X}\right)^2
	\Big),\\
	\mathcal{M}&=y_2\times\mathcal{K} \times \textsc{Seq}\left(\mathcal{N}\right),\\
	\mathcal{D}^{\omega}&=\mathcal{M}^k\times \mathcal{D}_{0}^{2 k+1},
	\end{align*}
	where the second equation marks whether the induced stack contains a bulge loop, an interior loop or a multi-loop. To get the information of the $i$-th parameters, we set $y_i=y$ and $y_j=1$ for $j\neq i$. Therefore we derive the generating function $\mathbf{D}^{\omega}$ and $\mathbf{D}_{g}^{i}(x,y)$.
\end{proof}
\begin{proof}[Proof of Theorem~\ref{T:SIrelation}]
	First we introduce some notions related to the decomposition of a diagram.
	Given a linear chord diagram $D$, an arc is called \emph{maximal} if it is maximal with respect to the partial order $ (i,j)\le (i',j') \quad \Longleftrightarrow \quad i'\le i\;\wedge j\le j'.$	
	Considering the left- and rightmost endpoints of a block containing
	some maximal arc induces a partition of the backbone into subsequent
	intervals. $D$ induces over each such interval a diagram, to which we
	refer to as a \emph{sub-diagram}.
	By construction, all maximal arcs of a fixed sub-diagram are contained in a unique
	block.
	
	In the  decomposition of a shape into a sequence of sub-diagrams,
	we distinguish the classes of sub-diagrams into two categories characterized by
	the unique arc-component containing all maximal arcs (maximal component). Namely,\\
	$\bullet$ sub-diagrams whose maximal arc-component contains only one arc, whose generating function is denoted by ${\bf T}^I_1(x,y,t)$,\\
	$\bullet$ sub-diagrams whose maximal arc-component is an (nonempty) irreducible diagram,  with generating function  ${\bf T}^I_2(x,y,t)$.\\
	
	{\bf Claim:} we have
	\begin{equation}\label{Eq:loopshape}
	\begin{aligned}
	{\bf S}^I(x,y,t)^{-1}&=1- {\bf T}^I_1(x,y,t)-{\bf T}^I_2(x,y,t)\\
	{\bf T}^I_1(x,y,t)&= x \left( {\bf T}^I_2(x,y,t)+ \frac{\left({\bf T}^I_1(x,y,t)+{\bf T}^I_2(x,y,t)\right)^2}{1-\left({\bf T}^I_1(x,y,t)+{\bf T}^I_2(x,y,t)\right)} \right)\\
	{\bf T}^I_2(x,y,t) &= 	{\bf S}^I(x,y,t)^{-1}{\bf I}^I\left(\frac{x{\bf S}^I(x,y,t)^2 }{1-x \left({\bf S}^I(x,y,t)^2-1\right)},y,t\right).
	\end{aligned}
	\end{equation}
	The first equation is implied by the decomposition of shapes into a sequence of
	two types of sub-diagrams.  Given a sub-diagram of the first type, the removal of the maximal arc-component (one arc)  generates again a sequence of
	two types of sub-diagrams. This sequence is neither empty nor  a sub-diagram of the first type, because a shape does not have $1$-arcs or parallel arcs. Thus we have the second equation.

	The third equation comes from the inflation of an irreducible shadow to sub-diagrams of the second type. Let $\zeta$ be a fixed irreducible shadow of genus $g$ having $n$ arcs and $l^I$ pseudoknots of $I$-type. Let $\mathcal{T}_2^{\zeta}$ denote the class of sub-diagrams of the second type, having $\zeta$ as the shadow of its unique maximal arc-component. Similarly we have  arcs $\mathcal{R}$, induced	stacks $\mathcal{N}$, stems $\mathcal{M}$, shapes $\mathcal{S}$, where $\mathbf{R}(x,y,t)=x$.  Then the inflation from  $\zeta$ to $\mathcal{T}_2^{\zeta}$ is described as follows
	\begin{align*}
	\mathcal{N}&=\mathcal{K}\times\left( \mathcal{S}^2-1\right) \\
	\mathcal{M}&=\mathcal{R} \times \textsc{Seq}\left(\mathcal{N}\right)\\
	\mathcal{T}_2^{\zeta}&=\mathcal{M}^n\times \mathcal{S}^{2 n-1}.
	\end{align*}
	Hence we derive the generating function
	\[
	{\bf T}^{\zeta}_2(x,y,t)=	{\bf S}^I(x,y,t)^{-1} \left(\frac{x{\bf S}^I(x,y,t)^2 }{1-x \left({\bf S}^I(x,y,t)^2-1\right)}\right)^n,
	\]
	which only depends on $n$. Summing over all irreducible shadows, gives rise to the third equation, completing the proof of the claim. Note that  genus and the number of  pseudoknots of type $I$ are both additive in this decomposition.
	
	Solving eqs.~(\ref{Eq:loopshape}) implies then eq.~(\ref{Eq:SIrelation}).
\end{proof}

\begin{proof}[Proofs of Theorem~\ref{T:loops} and Corollary~\ref{C:loops}]
	Let	$\mathbf{D}_g^{I} (x,y)$ denote the generating function of genus $g$ structures filtered by the number of $I$-type pseudoknots.
	Notice that a structure contains $I$-type	pseudoknots if and only if its corresponding shape contains also.  By the inflation method of Theorem~\ref{T:Dg}, we obtain
	\begin{equation}\label{Eq:loop}
	\mathbf{D}_g^{I} (x,y)= \mathbf{D}_{0}(x)\mathbf{S}_g^{I}\!\Big(\frac{x^{2r}  \mathbf{D}_{0}(x)^2}{1-x^2 -x^{2r} (\mathbf{D}_{0}(x)^2-1)},y\Big).
	\end{equation}
	Note that
	\begin{equation}\label{Eq:exp}
	\mathbb{E}(\mathbb{X}^I_{g,n})=
	\frac{[x^n]\partial_y \mathbf{D}_g^{I} (x,y)|_{y=1}}
	{[x^n] \mathbf{D}_g^{I} (x,1)}.
	\end{equation}
	By Corollary~\ref{C:shape2}, the polynomial $\partial_y \mathbf{S}_g^{H} (x,y)|_{y=1}$ has degree $6g-3$ in $x$. The proof of Theorem~\ref{T:gasy} shows that the subexponential factor of $[x^n]\partial_y \mathbf{D}_g^{H} (x,y)|_{y=1} $ is determined by the degree of  the polynomial $\partial_y \mathbf{S}_g^{H} (x,y)|_{y=1}$, whence we have
	\begin{align*}
	[x^n]\partial_y \mathbf{D}_g^{H} (x,y)|_{y=1} & = a_g n^{\frac{6g-5}{2}} \rho^{-n} \left(1 + O\left(\frac{1}{n}\right)\right),\\
	[x^n] \mathbf{D}_g^{H} (x,1) &= b_g n^{\frac{6g-3}{2}} \rho^{-n} \left(1 + O\left(\frac{1}{n}\right)\right),
	\end{align*}
	where $a_g$ and $b_g$ are some constants, $\rho$ is the dominant singularity of $\mathbf{D}_{0}(x)$.
	Therefore in view of eq.~(\ref{Eq:exp}) , we arrive at
	$
	\mathbb{E}(\mathbb{X}^H_{g,n})=O\left(n^{-1}\right).
	$
	Similarly we obtain $\mathbb{E}(\mathbb{X}^K_{g,n})=\mathbb{E}(\mathbb{X}^T_{g,n})=O(n^{-\frac{1}{2}})$ and $\mathbb{E}(\mathbb{X}^H_{g,n})=O\left(1\right)$.

	
	Now we set $g=\lambda=r=1$. Note that a structure of genus one has at most one $I$-type pseudoknots. Thus it suffices to compute the probability  of genus one structures containing such a pseudoknot, i.e., $\mathbb{P}(\mathbb{X}^I_{g,n}=1)$.
	We derive
	\begin{equation}\label{Eq:Htype}
	\mathbb{P}(\mathbb{X}^H_{1,n} =1)=\mathbb{E}(\mathbb{X}^H_{1,n})=
	\frac{[x^n]\partial_y \mathbf{D}_1^{H} (x,y)|_{y=1}}
	{[x^n] \mathbf{D}_1^{H} (x,1)}.
	\end{equation}
	Combining eq.~(\ref{Eq:loop}) and the singular expansion of $\mathbf{D}_{0}(x)$ in Theorem~\ref{T:asyexp}, we compute
	\begin{align*}
	\partial_y \mathbf{D}_1^{H} (x,y)|_{y=1}&=  \frac{1}{72} \left(\rho-x \right)^{-\frac{3}{2}} + o\big(\left(\rho-x \right)^{-\frac{3}{2}}\big), \\
	\mathbf{D}_1^{H} (x,1) &= \frac{1}{2592} \left(\rho-x \right)^{-\frac{5}{2}}-
	\frac{1}{256} \left(\rho-x \right)^{-\frac{3}{2}} + o\big(\left(\rho-x \right)^{-\frac{3}{2}}\big),
	\end{align*}
	where $\rho=\frac{1}{3}$.
	By Transfer Theorem (see Theorem 2 in Supplementary Material) 
	, we arrive at
	\begin{align*}
	[x^n]\partial_y \mathbf{D}_1^{H} (x,y)|_{y=1}= & \frac{\rho^{-\frac{3}{2}}}{72\, \Gamma(\frac{3}{2})}   \rho^{-n} n^{\frac{1}{2}} \left(1 + O\left(\frac{1}{n}\right)\right), \\
	[x^n]\mathbf{D}_1^{H} (x,1) =& \frac{\rho^{-\frac{5}{2}}}{2592\, \Gamma(\frac{5}{2})}  \rho^{-n} n^{\frac{3}{2}} \left(1+\frac{15}{8 n} +O\left(\frac{1}{n^2}\right) \right)\\
	&-  \frac{\rho^{-\frac{3}{2}}}{256\, \Gamma(\frac{3}{2})}   \rho^{-n} n^{\frac{1}{2}} \left(1 + O\left(\frac{1}{n}\right)\right).
	\end{align*}
	In view of eq.~(\ref{Eq:Htype}), we obtain the formula for $\mathbb{P}(\mathbb{X}^H_{1,n} =1)$ and proceed analogously for K, L, and M.
\end{proof}


\begin{center}
  {\bf ACKNOWLEDGMENTS} We wish to thank Christopher Barrett for stimulating discussions and the staff of the
  Biocomplexity Institute of Virginia Tech for their great support. Special thanks to Fenix W.D.~Huang for his
  help with the uniform sampler and for providing data on t-RNA structures.

\end{center}


\begin{center}
	{\bf AUTHOR DISCLOSURE STATEMENT}
\end{center}

The authors declare that no competing financial interests exist.


\bibliographystyle{spbasic}


\end{document}